\numberwithin{equation}{section}
\newtheorem{thm}{Theorem}[section] 
\newtheorem{prp}[thm]{Proposition}
\newtheorem{lmm}[thm]{Lemma}   
\newtheorem{crl}[thm]{Corollary} 
\theoremstyle{definition}
\newtheorem{dfn}[thm]{Definition}
\newtheorem{rmk}[thm]{Remark}
\newtheorem{eg}[thm]{Example}
\newtheorem{ques}{Question}
\def\BE#1{\begin{equation}\label{#1}}
\def\EE{\end{equation}}
\def\e_ref#1{(\ref{#1})}
\def\ov#1{\overline{#1}}
\def\wt#1{\widetilde{#1}}
\def\lra{\longrightarrow}
\def\al{\alpha}
\def\be{\beta}
\def\C{\mathcal C}
\def\R{\mathbb R}
\def\Z{\mathbb Z}
\def\ov#1{\overline{#1}}
\def\wt#1{\widetilde{#1}}
\def\lra{\longrightarrow}
\def\C{\mathbb C}
\def\R{\mathbb R}
\def\Z{\mathbb Z}
\def\avg{\textnormal{Avg}}
\def\al{\alpha}
\def\be{\beta}
\def\relbound{20}
\newcommand\res{\mathop{\hbox{\vrule height 7pt width .3pt depth 0pt
\vrule height .3pt width 5pt depth 0pt}}\nolimits}
\title{Ahlfors Currents and Symplectic Non-Hyperbolicity}
\author{Spencer Cattalani}
\thanks{Partially supported by NSF grants DMS 2246485 and DMS 1901979 and by the Simons Foundation}
\date{\today}
\begin{document}

\maketitle
\begin{abstract}
Complex (affine) lines are a major object of study in complex geometry, but their symplectic aspects are not well understood. Inspired by Duval's work on Ahlfors currents, we use them to perform a systematic study of complex lines in symplectic manifolds. In particular, we generalize (by a different method and under topological assumptions) a result of Bangert on the existence of complex lines. We show that Ahlfors currents control the asymptotic behavior of families of pseudoholomorphic curves, refining a result of Demailly. Lastly, we show that the space of Ahlfors currents is convex.
\end{abstract}

\tableofcontents

\section{Introduction}
Symplectic geometry has benefitted greatly from the study of pseudoholomorphic curves. In particular, the study of compact pseudoholomorphic curves has opened connections to complex geometry and revealed much algebraic structure in the symplectic category (e.g. Gromov-Witten invariants, the Fukaya category, Floer theory, etc.) However, there are transcendental phenomena not detected by compact pseudoholomorphic curves. For example, although all Gromov-Witten invariants vanish for tori, Bangert has proved the following theorem:

\begin{thm}[\cite{Ban98}]\label{Bangert_thm}
There exists a nonconstant $J$-holomorphic map $u: \C \lra T^{2n}$ for any almost complex structure $J$ tamed by the standard symplectic structure on $T^{2n}$.
\end{thm}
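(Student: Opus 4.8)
The plan is to obtain the entire curve by producing $J$-holomorphic curves in $T^{2n}$ of unbounded area and then extracting a limit by a Brody-type rescaling, carried out on the universal cover so that periodicity supplies the compactness. As a guide, the case $n=1$ is essentially classical function theory: pulling $J$ back to the universal cover gives a $\Z^2$-invariant (hence bounded-geometry) almost complex structure $\wt J$ on $\R^2$, which — since every almost complex structure on a surface is integrable — makes $(\R^2,\wt J)$ a simply connected Riemann surface, so biholomorphic to $\C$, to the disk, or to the sphere. It is not the sphere (it is noncompact) and not the disk (otherwise $\Z^2$ would act freely, properly discontinuously and cocompactly by hyperbolic isometries, exhibiting $T^2$ as a closed hyperbolic surface, which is impossible since $\chi(T^2)=0$); hence $(\R^2,\wt J)\cong\C$, and the composition of the uniformizing biholomorphism with the covering projection is the desired nonconstant $J$-holomorphic map $\C\to T^2$. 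For $n\ge 2$ uniformization is unavailable, so a different engine is needed.

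For general $n$ I would argue by contradiction: if there is no nonconstant entire $J$-holomorphic curve then, by a two-scale Brody reparametrization together with compactness of $T^{2n}$, there is a constant $C$ with $\operatorname{Area}_g\!\big(u(D_1)\big)\le C$ for every $J$-holomorphic $u$ from the disk $D_2$ of radius $2$ (a sequence violating this would concentrate gradient and rescale to a nonconstant entire curve). Because $J$ is tamed by $\omega_{\std}$, the Riemannian area of a $J$-holomorphic map is uniformly comparable to its symplectic area $\int u^{*}\omega_{\std}$, and these comparison constants stay uniform on the cover since $\wt J$ is $\Z^{2n}$-periodic; so it suffices to exhibit $J$-holomorphic curves in $T^{2n}$ carrying arbitrarily large symplectic area over a fixed interior piece of their domain, away from any boundary. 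Once such curves are in hand the rescaling is run on the cover: lift to $\wt J$-holomorphic curves in $\R^{2n}$, translate into a fixed fundamental domain, rescale at the concentration scale, and use the $\Z^{2n}$-periodicity of $\wt J$ — hence its bounded geometry — to prevent escape; Gromov compactness then produces a nonconstant $\wt J$-holomorphic plane in $\R^{2n}$, whose image downstairs is the sought entire curve. The interior non-concentration is exactly what keeps this limit nonconstant and planar.

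The crux — and where I expect the real difficulty to lie — is constructing $J$-holomorphic curves in $T^{2n}$ of unbounded area in the first place. This cannot be done enumeratively: all Gromov--Witten invariants of $T^{2n}$ vanish, so such curves are invisible to algebra and must be produced analytically. The natural raw material is the Lagrangian torus fibration $(x,y)\mapsto y$, whose fibers $L\cong T^{n}$ are totally real for every tamed $J$, together with the fact that $[\omega_{\std}]\neq 0$ while its pullback to $\R^{2n}$ is exact with a primitive of at most linear growth. One wants $J$-holomorphic curves with boundary on the fibers; disks are ruled out (since $\pi_2(T^{2n},L)=0$), but curves of higher topological type, in relative classes on which $\omega_{\std}$ is arbitrarily large, are not, and one could try to produce them by a Plateau-type minimization. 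The delicate point is to control the minimizing sequences uniformly so that the area neither bubbles off into a cluster of small disks (the ``concentrated'' degeneration) nor collapses onto a fiber — and this is precisely the bookkeeping I would expect the paper's theory of Ahlfors currents to perform: record the normalized currents carried by a sequence of such curves, show (using exactness of $\omega_{\std}$ and periodicity of $\wt J$ upstairs) that their limit is a nonzero closed positive current whose cohomology class is incompatible with every member of the sequence being area-concentrated, and extract from that the quantitative non-concentration needed to run the rescaling and close the contradiction.
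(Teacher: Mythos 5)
Your $n=1$ uniformization argument and your general reduction (``no entire curve $\Rightarrow$ uniform area bound for unit subdisks'' via Brody reparametrization on the periodic cover) are fine, but the proof stops exactly where the theorem lives: you never actually produce $J$-holomorphic curves of unbounded area with the required interior non-concentration, and the mechanism you gesture at would not work as stated. Plateau-type minimization in a relative class produces area-minimizing surfaces for the metric, not $J$-holomorphic curves; for a merely tamed $J$, pseudoholomorphic curves are only quasi-minimizing, and minimizers need not be $J$-invariant, so this gives no holomorphic input. Moreover, the objects you propose as raw material are largely unavailable: as you note yourself, disks on a Lagrangian torus fiber are excluded since $\pi_2(T^{2n},T^n)=0$; closed $J$-curves need not exist at all for a tamed $J$ on the torus (all Gromov--Witten invariants vanish, and for generic $J$ there are none); and there is no existence theory invoked (no Fredholm/degree setup, no compactness) for higher-genus bordered curves on fibers in classes of large $\omega$-area. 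The sentence beginning ``this is precisely the bookkeeping I would expect the paper's theory of Ahlfors currents to perform'' is an acknowledgement of the gap, not an argument: the quantitative statement that the area of the produced curves concentrates in the interior, away from the Lagrangian boundary, is itself the main estimate and is nowhere derived.

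For comparison, the paper closes both holes with specific devices. Existence: it does not use the fibers $T^n$ but splits $T^{2n}=T^{2n-2}\times T^2$ and takes the Lagrangian $L=T^{n-1}\times S$ with $S\subset T^2$ a \emph{contractible} circle bounding a disk $D$; the relative class $[D]$ has minimal positive symplectic area, so the moduli space of such disks is compact for every tamed $J$, and a continuity/degree argument (evaluation map of degree $1$ mod $2$ at a split complex structure, Lemma~\ref{existence_of_disks_lmm}) produces a $J$-holomorphic disk with boundary on $L$ for every tamed $J$. Non-concentration: one lifts these disks to the universal cover with boundary on $\wt L\times S_{2R}$ and applies a large-scale monotonicity formula relative to the Lagrangian (Proposition~\ref{monotonicity_prp}), powered by the at-most-quadratic filling function of $\Z^{2n}$, to show that the fraction of area within distance $1$ of the boundary tends to $0$ as $R\to\infty$; Lemma~\ref{isoperimetry_from_coarse_isoperimetry_lmm} then yields subdisks with $\textnormal{length}(\partial D')/\textnormal{area}(D')\to 0$, i.e.\ an Ahlfors current, and Duval's quantitative Brody lemma gives the entire curve. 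Unless you supply a genuine source of holomorphic curves (your minimization scheme is not one) together with a quantitative boundary estimate of this kind, the proposal does not prove the theorem.
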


In complex geometry, a manifold which lacks holomorphic lines is called \textit{hyperbolic}\footnote{There are different notions of hyperbolicity in the literature. This one differs from the notion of symplectic hyperbolicity in \cite{Bio04}, which is Floer-theoretic.}. The study of complex hyperbolicity is quite rich, and many aspects can be seen to be symplectic. 
For instance, \cites{Duv04, Duv19, Sal14} generalize classic results to the symplectic setting. However, the symplectic topology of complex lines remains obscure. Sikorav~\cite{Sik98} and Gromov~\cite[Section IV]{Gro00} have both noted the potential for a theory of symplectic (non)hyperbolicity. Much work has been directed toward this goal \cites{Bio04, Deb01, Hag17, Hag10, hindhyperbolicity, Iva04, Kob01, Li12}.\\

Due to their noncompactness, it is difficult to make quantitative statements about complex lines. In fact, in the words of McQuillan~\cite{McQ02}: ``the non-existence of holomorphic lines is an essentially useless qualitative statement without the quantitative information provided by the convergence of discs.'' It is common in complex geometry to circumvent this issue by studying sequences of disks. In particular, the following notion is quite useful:

\begin{dfn}\label{Ahlfors_current_dfn}
Let $(X,J,g)$ be an (almost) Hermitian manifold. A 2-current $T$ on $X$ is an \textit{Ahlfors current} if there is a sequence of $J$-holomorphic disks $D_i$ such that $\textnormal{length}(\partial D_i)/\textnormal{area}(D_i)$ tend to $0$ and the currents $D_i/\textnormal{area}(D_i)$ tend to $T$, weakly.
\end{dfn}

Recall that an \textit{almost Hermitian manifold} is an almost complex manifold $(X,J)$ with a $J$-invariant Riemannian metric $g$. Throughout, a $J$-holomorphic disk is taken to mean a $J$-holomorphic map $u: D \lra X$ from a closed disk $D \subset \C$ and the length and area are measured with respect to the pullback metric (equivalently, by integrating the relevant Jacobian).

\begin{rmk}
By the Ahlfors lemma \cite[Lemma~6.9]{Gro99} (see also \cites{Duv17}), every complex line in a compact manifold gives rise to an Ahlfors current. By Duval's quantitative Brody lemma~\cite[Théorème]{Duv08}, every Ahlfors current gives rise to a complex line. However, there is not a one-to-one correspondence between the two classes of objects~\cite{Huy21}.
\end{rmk}

The aim of this article is to understand the symplectic topology of Ahlfors currents. Throughout, we pursue an analogy with rational curves. In particular, existence theorems (via Gromov's compactness theorem~\cite{Gro85}), the bubbling phenomenon, and the study of moduli spaces are central to the study of rational curves. Our Theorems~\ref{existence_thm}, \ref{bubbling_thm}, and \ref{convexity_thm} address the corresponding points for Ahlfors currents.\\

We first address the existence of Ahlfors currents. Rational and elliptic curves are special cases of Ahlfors currents, and they are well-studied in symplectic geometry. However, more general Ahlfors currents are not amenable to the same techniques. It is difficult even to perturb a pre-existing complex line (cf.~\cite{Mos95}). When one perturbs a projective complex structure on a torus to a non-projective one, its subtori break up into complex lines that fill up the manifold. So, even local questions concerning complex lines can require global knowledge of the ambient manifold. This is part of the reason Theorem~\ref{Bangert_thm} is so remarkable. Our first result is a generalization thereof.

\begin{thm}\label{existence_thm}
Let $(X,\omega)$ be a closed connected symplectically aspherical manifold so that
\begin{enumerate}[label = (\alph*)]
\item\label{retraction_assumption} X retracts onto a closed Lagrangian submanifold $L_X$ and
\item\label{Dehn_assumption} the homological Dehn function $\delta_X$ of $\pi_1(X)$ grows at most quadratically.
\end{enumerate}
Then, there exists a nonconstant $J$-holomorphic map $u:\C \lra X \times T^2$ for every almost complex structure $J$ tamed by $\omega \oplus \omega_{std}$.
\end{thm}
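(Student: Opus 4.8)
The plan is to mimic Bangert's strategy while carefully controlling the area/perimeter ratio using the geometric hypotheses on $X$. The key idea behind Theorem~\ref{Bangert_thm} is that one produces, for each $i$, a $J$-holomorphic disk whose image wraps many times around the torus factor, so that its area grows while its boundary length stays controlled; a limiting argument (via Gromov compactness for the ``bubble tree'' or via a diagonal/rescaling argument) then yields the entire plane $\C$. Here we work on $X \times T^2$ with a tamed (not necessarily split) $J$. First I would set up the relevant area bounds: since $\omega \oplus \omega_{std}$ tames $J$, the area of any $J$-holomorphic disk is bounded below by its symplectic area, and for a disk representing a multiple of the $T^2$ class this symplectic area grows linearly. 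The role of assumption~\ref{retraction_assumption} is to let us ``cap off'' or otherwise control the boundary of candidate disks: a retraction of $X$ onto a Lagrangian $L_X$ lets us deform boundary loops into $L_X$, where $\omega$ vanishes, so the boundary contributes no symplectic area and (after a choice of primitive) can be made short.

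Next I would carry out the core construction. One builds, for each winding number $k$, an explicit Lipschitz (not holomorphic) map $v_k : \Sigma_k \to X \times T^2$ from a large disk $\Sigma_k$, whose $T^2$-component covers the torus roughly $k$ times and whose $X$-component lands in (a neighborhood of) $L_X$. The homological Dehn function hypothesis~\ref{Dehn_assumption} enters precisely here: to realize a loop in $\pi_1(X)$ that wraps $k$ times by a disk of area only $O(k^2)$ — rather than exponential in $k$ — one needs the (homological) isoperimetric function of $\pi_1(X)$ to be at most quadratic. This gives a comparison map with $\mathrm{area}(v_k) = O(k^2)$ and $\mathrm{length}(\partial v_k) = O(k)$, hence ratio $O(1/k) \to 0$. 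Then one applies a symplectic-isoperimetric / minimization argument: within the appropriate relative homotopy (or homology) class with boundary constrained near $L_X$, minimize area; standard theory of tamed almost complex structures (Gromov, Sikorav) shows the minimizer is achieved by a $J$-holomorphic disk $D_k$, with $\mathrm{area}(D_k) \le \mathrm{area}(v_k)$ and boundary length still $O(k)$, so that $\mathrm{length}(\partial D_k)/\mathrm{area}(D_k) \to 0$. By Definition~\ref{Ahlfors_current_dfn} the normalized currents $D_k/\mathrm{area}(D_k)$ subconverge to a nonzero Ahlfors current $T$ (nonzero because the $T^2$-area is bounded below), and by Duval's quantitative Brody lemma (see the Remark above) this Ahlfors current yields a nonconstant $J$-holomorphic map $u : \C \to X \times T^2$.

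The main obstacle I expect is establishing the quadratic area bound for the comparison maps $v_k$ and, relatedly, arranging that the boundary of $D_k$ stays uniformly close to $L_X$ (or at least stays in a region where $\omega$ has a bounded primitive) so that the boundary-length estimate survives the minimization. The homological Dehn function hypothesis is tailored to give the area bound at the level of chains, but one must promote this to an honest Lipschitz map on a disk with the correct boundary behavior and control how the retraction $X \to L_X$ interacts with the metric — this is where symplectic asphericity ($\omega$ exact on the universal cover / vanishing on spheres) and the Lagrangian condition must be combined. A secondary subtlety is ensuring the limit current is genuinely nontrivial and that the diameters of $D_k$ (in the base $X$) do not blow up in a way that destroys the compactness argument; controlling this again uses the retraction onto the compact Lagrangian $L_X$.
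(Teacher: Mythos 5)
Your overall skeleton (produce holomorphic disks whose boundary is tied to the Lagrangian, verify the Ahlfors condition, then invoke Duval's quantitative Brody lemma) matches the paper's endgame, but the two central steps are gaps. First, you obtain the disks $D_k$ by ``minimizing area in a relative class'' and asserting that the minimizer is $J$-holomorphic. That is not true: for a tamed (or even compatible) $J$, pseudoholomorphic curves are (quasi)minimizers of area, but an area minimizer with free boundary on (or near) a Lagrangian is a minimal surface and need not be $J$-holomorphic; no ``standard theory'' upgrades it. Existence of the disks is a genuine theorem, which the paper proves by a continuity/degree-mod-2 argument for disks with boundary on $L_X\times S$ in the class of the flat disk $[D]$ (Lemma~\ref{existence_of_disks_lmm}), where symplectic asphericity and the $\omega$-minimality of $[D]$ rule out bubbling.

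Second, and more fundamentally, nothing in your argument controls $\textnormal{length}(\partial D_k)$: the boundary of a minimizer (or of any holomorphic disk with boundary on the Lagrangian) is unconstrained, and the length of the comparison map's boundary says nothing about it. Your stated bounds also do not give the conclusion even formally: minimization yields only the upper bound $\textnormal{area}(D_k)\leq O(k^2)$, while the taming/winding argument gives a lower bound that is merely linear in $k$, so ``length $O(k)$ over area'' need not tend to $0$. Controlling this ratio is exactly the heart of the paper's proof: the disks from Lemma~\ref{existence_of_disks_lmm} are lifted to $\wt{X}\times\R^2$ with boundary on $\wt{L}\times S_{2R}$; they are quasiminimizing relative to this Lagrangian (Example~\ref{J-curves_are_quasiminimizing_eg}, using asphericity for exactness of the lifted form); the quadratic Dehn function together with the Lipschitz retraction produces a linear filling function relative to the Lagrangian (Lemmas~\ref{quasiisometry_invariance_lmm}, \ref{product_with_space_lmm}, and \ref{relative_from_filling_lmm}); and the large-scale monotonicity formula (Proposition~\ref{monotonicity_prp}) then shows that the area within distance $1$ of the Lagrangian is a vanishing fraction of the total area as $R\to\infty$, after which Lemma~\ref{isoperimetry_from_coarse_isoperimetry_lmm} extracts sub-disks with $\textnormal{length}(\partial D'_R)/\textnormal{area}(D'_R)\to 0$. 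Relatedly, your use of the Dehn hypothesis (filling a $k$-fold boundary loop by a disk of area $O(k^2)$) is not where it is needed---such a filling already exists inside the flat $T^2$ factor; in the paper the hypothesis controls fillings of the slices of the holomorphic disk near the Lagrangian in the universal cover, which is what makes the reverse isoperimetric estimate available at arbitrarily large scales.
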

\goodbreak

We recall that a retraction of a topological space $X$ onto a subspace $L_X$ is a continuous map from $X$ to $L_X$ which is the identity on $L_X$. In particular, a retraction is not necessarily a deformation retraction. The notion of homological Dehn function is recalled in Section~\ref{Filling_sec}. 

\begin{rmk}\label{nonsummable_rmk}
Both assumptions are stronger than needed. It suffices to assume that the lift of $L_X$ to the universal cover of $X$ is a $K$-Lipschitz retract for some $K \geq 1$. It also suffices to assume that $n/\delta_X(n)$ is not summable. This includes functions like $n^2\log n$, $n^2 \log \log n$, etc. It is not clear to the author how much further the assumptions can be weakened, as it can be quite difficult to show that a given almost complex manifold is hyperbolic.
\end{rmk}

\begin{crl}\label{surfaces_crl}
Let $(X,\omega)$ be a product of closed surfaces, equipped with the product symplectic form. If all genera are greater than $1$, then every almost complex structure $J$ tamed by $\omega$ is hyperbolic. If not, then no almost complex structure $J$ tamed by $\omega$ is hyperbolic.
\end{crl}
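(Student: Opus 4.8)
The plan is to prove the two assertions separately, reducing each to results already at hand.

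For the first assertion (all genera $>1$ implies hyperbolicity) I would show that $X$ admits no Ahlfors current by a version of Gromov's K\"ahler-hyperbolicity argument. Since taming and hyperbolicity are unchanged under rescaling $\omega$ and under symplectomorphism, Moser's theorem lets me assume each factor $\Sigma_{g_i}$ carries its hyperbolic area form, so $\omega=\bigoplus_i\pi_i^*\omega_i$. On the universal cover $\wt X\cong(\mathbb H^2)^k$ the form $\wt\omega$ has a primitive $\wt\alpha=\bigoplus_i\pi_i^*\wt\alpha_i$ which is bounded in the product hyperbolic metric $\wt g_0$ --- in the upper half-plane model the primitive $-y^{-1}dx$ of the hyperbolic area form has pointwise norm $1$ --- and hence also in the lift of any $J$-invariant metric $g$ on $X$, since all metrics on the compact $X$ are bi-Lipschitz. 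Now let $J$ be tamed by $\omega$ and $u\colon D\to X$ a $J$-holomorphic disk. As $D$ is simply connected, $u$ lifts to $\wt u\colon D\to\wt X$, and Stokes' theorem together with the taming bound $u^*\omega\ge c\,u^*\mathrm{dvol}_g$ (with $c>0$ independent of $u$, by compactness of $X$) gives $c\,\mathrm{area}(D)\le\int_D u^*\omega=\int_{\partial D}\wt u^*\wt\alpha\le\|\wt\alpha\|_\infty\,\mathrm{length}(\partial D)$. Thus $\mathrm{length}(\partial D)/\mathrm{area}(D)\ge c/\|\wt\alpha\|_\infty>0$ uniformly over all $J$-holomorphic disks, so no sequence of $J$-holomorphic disks can satisfy Definition~\ref{Ahlfors_current_dfn}: there is no Ahlfors current. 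By the Ahlfors lemma quoted in the Remark after Definition~\ref{Ahlfors_current_dfn}, a complex line in the compact manifold $X$ would produce one; hence $(X,J)$ has no complex line, i.e.\ it is hyperbolic.

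For the second assertion I would distinguish which low-genus factor occurs. If some factor is $S^2$, write $X=S^2\times Z$ and take $A=[S^2\times\mathrm{pt}]\in H_2(X;\Z)$. For the split structure $j_{S^2}\oplus J_Z$ the only $J$-holomorphic spheres in class $A$ are the slices $S^2\times\{w\}$ (the $Z$-component has zero energy, hence is constant), these are regular, and the one-pointed evaluation map $\overline{\mathcal M}_{0,1}(X,A)\to X$ is a degree-one map; so the genus-$0$ Gromov--Witten invariant of $X$ in class $A$ with a point constraint equals $\pm1$. Consequently, for every tamed $J$ there is a $J$-holomorphic genus-$0$ stable map in class $A$ through a generic point, and a nonconstant component of it is a nonconstant $J$-holomorphic sphere $v\colon S^2\to X$; precomposing with an embedding $\C\cong S^2\setminus\{\infty\}\hookrightarrow S^2$ produces a complex line, so no tamed $J$ is hyperbolic. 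If instead no factor is $S^2$ but some factor is $T^2$, I would write $X=X'\times T^2$, where $X'$ is the product of the remaining factors, all of genus $\ge1$, and check that $(X',\omega')$ satisfies the hypotheses of Theorem~\ref{existence_thm}: it is closed and connected; it is symplectically aspherical since each factor, having genus $\ge1$, is aspherical and hence $\pi_2(X')=0$; for \ref{retraction_assumption}, choose a non-separating simple closed curve $\gamma_i$ in each factor, so that primitivity of $[\gamma_i]$ in $H_1(\Sigma_{g_i};\Z)$ gives a map $\Sigma_{g_i}\to S^1$ restricting to degree $1$ on $\gamma_i$, which may be homotoped to a retraction $\Sigma_{g_i}\to\gamma_i$ (using the homotopy extension property, since $\gamma_i\hookrightarrow\Sigma_{g_i}$ is a cofibration), and the product retraction lands on the Lagrangian torus $L_{X'}=\prod_i\gamma_i$ (each circle is isotropic and $\dim L_{X'}=\tfrac12\dim X'$); and for \ref{Dehn_assumption}, each factor admits a metric of non-positive curvature (flat if genus $1$, hyperbolic otherwise), so $\pi_1(X')$ acts properly and cocompactly on a $\mathrm{CAT}(0)$ space and therefore has at most quadratic (homological) Dehn function. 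After globally rescaling $\omega$ and applying a Moser symplectomorphism on the $T^2$ factor so that $\omega=\omega'\oplus\omega_{std}$, Theorem~\ref{existence_thm} then yields a nonconstant $J$-holomorphic map $\C\to X'\times T^2=X$ for every tamed $J$; when $X'$ is a point this is exactly Bangert's Theorem~\ref{Bangert_thm}. In all cases, no tamed $J$ is hyperbolic.

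The step I expect to require the most care is the verification, in the torus case, that the candidate set $\prod_i\gamma_i$ is simultaneously a deformation retract of $X'$ and a Lagrangian submanifold, and that $\pi_1$ of an arbitrary product of positive-genus surfaces has quadratic homological Dehn function --- i.e.\ that $(X',\omega')$ honestly meets the hypotheses of Theorem~\ref{existence_thm}. The $S^2$-factor case lies outside that theorem and must be handled separately, but the Gromov--Witten input there is elementary; and the hyperbolic direction is a routine transplantation of Gromov's K\"ahler-hyperbolicity estimate to the tamed almost complex setting.
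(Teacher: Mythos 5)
Your proposal is correct and follows essentially the same route as the paper: the same three-case split (a sphere factor handled by Gromov-style existence of rational curves, a torus factor handled by applying Theorem~\ref{existence_thm} to the product of the remaining factors with the Lagrangian product of circles and a quadratic Dehn function from nonpositive curvature, and the all-genus-$\geq 2$ case handled by a bounded primitive on the universal cover giving a linear isoperimetric inequality, hence no Ahlfors currents and hyperbolicity via the Ahlfors lemma). The only difference is that you spell out details the paper delegates to citations (Gromov's Theorem~2.3.C and his Example~0.2.C$'$ on hyperbolicity of the area forms), plus a harmless normalization step (Moser on each factor, or simply noting hyperbolicity of a form depends only on its cohomology class) in place of the paper's direct citation.
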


This answers a question of Ivashkovich and Rosay~\cite[Question~1]{Iva04} in full generality.

\begin{crl}\label{nonpositive_crl}
Let $(X,\omega)$ be a closed symplectic manifold with a metric $g$ of nonpositive curvature and a Lagrangian $L_X$ which is totally geodesic with respect to $g$. Then, no almost complex structure $J$ on $X \times T^2$ tamed by $\omega \oplus \omega_{std}$ is hyperbolic.
\end{crl}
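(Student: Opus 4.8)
The plan is to derive this from Theorem~\ref{existence_thm}, in the relaxed form permitted by Remark~\ref{nonsummable_rmk}, by checking its hypotheses for $(X,\omega)$ together with $L_X$; assume $X$ is connected, else argue componentwise. Let $\widetilde X$ be the universal cover, equipped with the lift of $g$. By the Cartan--Hadamard theorem this metric is complete and $\widetilde X$ is diffeomorphic to $\R^{\dim X}$; hence $X$ is aspherical and $\pi_2(X)=0$, so $\omega$ and $c_1$ vanish on $\pi_2(X)$ and $(X,\omega)$ is symplectically aspherical. (Since $X$ is compact, whether a subset of $\widetilde X$ is a $K$-Lipschitz retract for some $K$ is insensitive to which metric on $X$ is lifted, so it is legitimate to work with the given nonpositively curved one.)

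Next I would treat the Lagrangian. Being totally geodesic in the nonpositively curved $(X,g)$, the Gauss equation makes $L_X$ itself nonpositively curved, hence aspherical; lifting a null-homotopic loop of $L_X$ to $\widetilde X$ and invoking uniqueness of geodesics there shows the inclusion $L_X\hookrightarrow X$ is $\pi_1$-injective, so the lift $\widetilde L$ of $L_X$ is a single connected, complete, totally geodesic submanifold of $\widetilde X$. Such a submanifold is convex: for $p,q\in\widetilde L$, the ambient geodesic from $p$ to $q$ is unique, while the geodesic of $\widetilde L$ joining them (which exists by completeness) is also an ambient geodesic (by total geodesy), so the two coincide and lie in $\widetilde L$. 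Since nearest-point projection onto a closed convex subset of a Hadamard manifold is well-defined and $1$-Lipschitz, $\widetilde L$ is a $1$-Lipschitz retract of $\widetilde X$, which is exactly what Remark~\ref{nonsummable_rmk} asks in place of assumption~\ref{retraction_assumption}.

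For assumption~\ref{Dehn_assumption}, note that $\pi_1(X)$ acts geometrically (properly discontinuously and cocompactly by isometries) on the CAT(0) space $\widetilde X$, hence is a CAT(0) group and satisfies a quadratic isoperimetric inequality; consequently its homological Dehn function $\delta_X$ grows at most quadratically (and certainly $n/\delta_X(n)$ is not summable). With all hypotheses verified, Theorem~\ref{existence_thm} produces, for every $J$ on $X\times T^2$ tamed by $\omega\oplus\omega_{std}$, a nonconstant $J$-holomorphic map $\C\to X\times T^2$, so no such $J$ is hyperbolic. I do not expect a genuine obstacle here, as the corollary is essentially a translation of its hypotheses into those of Theorem~\ref{existence_thm}; the one point that demands care is the Riemannian input — that a complete totally geodesic submanifold of a Hadamard manifold is a $1$-Lipschitz retract — together with the observation that this must be carried out on $\widetilde X$, since the nearest-point projection $\widetilde X\to\widetilde L$ is equivariant only for the stabilizer of $\widetilde L$ in $\pi_1(X)$ and need not descend to a retraction $X\to L_X$ on the nose, which is precisely why the relaxed hypothesis of Remark~\ref{nonsummable_rmk} is used.
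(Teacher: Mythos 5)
Your proof is correct and follows essentially the same route as the paper's: nonpositive curvature gives the quadratic homological Dehn function (the paper quotes Example~\ref{nonpositive_filling_function_eg}), the totally geodesic lift admits a $1$-Lipschitz retraction (the paper invokes the normal exponential map via Lang; your convexity plus nearest-point projection argument is the same fact), and one concludes by Theorem~\ref{existence_thm} together with Remark~\ref{nonsummable_rmk}. One small slip: $\pi_1$-injectivity of $L_X\hookrightarrow X$ does not make the full preimage $\pi^{-1}(L_X)$ connected (that would require surjectivity on $\pi_1$), but since your retraction argument applies verbatim to a single component of the lift --- which is what Remark~\ref{nonsummable_rmk} is actually used for --- this does not affect the argument.
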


Some results similar to Theorem~\ref{existence_thm} were obtained in \cite{Bio04} in the context of convex symplectic manifolds, under some geometric assumptions and restrictions on the almost complex structure, by completely different, Floer-theoretic methods. Ahlfors currents were constructed in a related context in~\cite{DuvGay14}.
The assumptions of Theorem~\ref{existence_thm} are topological and much weaker than the curvature assumption in Corollary~\ref{nonpositive_crl}. This is highlighted in Corollary~\ref{group_crl} below. For $n \geq 3$, the product space in this corollary has nontrivial $\pi_2$ \cite[Proposition~3.6]{DiC25} and thus admits no metric of nonpositive curvature by the Cartan-Hadamard theorem \cite[Chapter~IX, Theorem~3.8]{Lan99}. The $(g,n) = (1,1)$ case of Corollary~\ref{group_crl} is the $n=3$ case of Theorem~\ref{Bangert_thm}.

\begin{crl}\label{group_crl}
Let $(\textnormal{Sym}^n(\Sigma_g), \omega)$ be the $n$-fold symmetric product of a surface of genus $g$ with its natural symplectic structure. Suppose $g \geq 2n-1$. Then, no almost complex structure $J$ on $\textnormal{Sym}^n(\Sigma_g) \times \textnormal{Sym}^n(\Sigma_g) \times T^2$ tamed by $\omega \oplus -\omega \oplus \omega_{std}$ is hyperbolic. 
\end{crl}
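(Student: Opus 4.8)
The plan is to apply Theorem~\ref{existence_thm} to $X = \textnormal{Sym}^n(\Sigma_g) \times \textnormal{Sym}^n(\Sigma_g)$ equipped with the symplectic form $\omega \oplus (-\omega)$. Since an almost complex structure is non-hyperbolic precisely when it admits a nonconstant entire $J$-holomorphic curve, the output of that theorem---a nonconstant $J$-holomorphic map $\C \lra X \times T^2 = \textnormal{Sym}^n(\Sigma_g) \times \textnormal{Sym}^n(\Sigma_g) \times T^2$ for every $J$ tamed by $\omega \oplus (-\omega) \oplus \omega_{std}$---is exactly the assertion of the corollary. So it remains to verify the hypotheses of Theorem~\ref{existence_thm} for $(X, \omega\oplus(-\omega))$; closedness and connectedness of $X$ are immediate.

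For the Lagrangian retraction~\ref{retraction_assumption}, I would take $L_X$ to be the diagonal $\Delta \cong \textnormal{Sym}^n(\Sigma_g)$: it is a closed submanifold, it is Lagrangian for $\omega \oplus (-\omega)$ because it is the graph of the identity symplectomorphism (so $(\omega \oplus -\omega)\big((v,v),(w,w)\big) = \omega(v,w) - \omega(v,w) = 0$), and $(x,y) \mapsto (x,x)$ is a retraction of $X$ onto it. This retraction lifts to a Lipschitz retraction of universal covers with respect to product metrics, so the strengthening recorded in Remark~\ref{nonsummable_rmk} is available as well.

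For the Dehn function hypothesis~\ref{Dehn_assumption}, recall the classical computation of Macdonald that $\pi_1(\textnormal{Sym}^n(\Sigma_g)) \cong H_1(\Sigma_g;\Z) \cong \Z^{2g}$ for $n \geq 2$, while $\pi_1(\textnormal{Sym}^1(\Sigma_g)) = \pi_1(\Sigma_g)$. Hence $\pi_1(X) \cong \Z^{4g}$ when $n \geq 2$, which is abelian and so has quadratic homological Dehn function; and $\pi_1(X) \cong \pi_1(\Sigma_g)^2$ when $n = 1$, where $\pi_1(\Sigma_g)$ is $\Z^2$ (if $g = 1$, permitted since $g \geq 2n-1 = 1$) or word-hyperbolic (if $g \geq 2$), in either case with at most quadratic homological Dehn function---and a finite direct product of such groups again has at most quadratic homological Dehn function. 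Either way~\ref{Dehn_assumption} holds.

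The only place the numerical hypothesis $g \geq 2n-1$ enters is in establishing that $(X, \omega\oplus(-\omega))$ is symplectically aspherical, which reduces to symplectic asphericity of $(\textnormal{Sym}^n(\Sigma_g), \omega)$, a product of symplectically aspherical manifolds being symplectically aspherical and reversal of sign preserving asphericity. For $n = 1$ this is clear, as $\Sigma_g$ is aspherical. For $n \geq 2$, I would argue via Brill--Noether theory: the relevant Brill--Noether number is $\rho(g,1,n) = g - 2(g-n+1) = 2n-2-g$, which is negative exactly when $g \geq 2n-1$, so in that range the Abel--Jacobi map $u_n : \textnormal{Sym}^n(\Sigma_g) \lra \textnormal{Jac}(\Sigma_g)$ is a holomorphic embedding---its injectivity and immersivity can fail only along the locus $W^1_n$ of degree-$n$ divisors moving in a pencil, which is empty here---exhibiting $\textnormal{Sym}^n(\Sigma_g)$ as a complex submanifold of the torus $\textnormal{Jac}(\Sigma_g)$ with $\omega$ the restriction of a flat Kähler form $\omega_0$. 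Since $\pi_2(\textnormal{Jac}(\Sigma_g)) = 0$, every sphere $v : S^2 \to \textnormal{Sym}^n(\Sigma_g)$ satisfies $\int_{S^2} v^*\omega = \int_{S^2} (u_n \circ v)^*\omega_0 = 0$, so $\omega$ is symplectically aspherical. I expect this to be the only delicate point---pinning down the natural symplectic form and checking its asphericity, in particular for curves that are not Brill--Noether general; the rest is a direct application of Theorem~\ref{existence_thm}.
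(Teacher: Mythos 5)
Your overall reduction is exactly the paper's: take the diagonal as a closed Lagrangian in $\big(\textnormal{Sym}^n(\Sigma_g)\times\textnormal{Sym}^n(\Sigma_g),\,\omega\oplus-\omega\big)$ with the retraction $(x,y)\mapsto(x,x)$, verify an at most quadratic homological Dehn function, and apply Theorem~\ref{existence_thm}. The paper obtains the two nontrivial hypotheses --- free abelian fundamental group and symplectic asphericity of the natural form when $g\geq 2n-1$ --- by citing \cite[Corollary~3.5 and Theorem~4.1]{DiC25} and then Corollary~\ref{free_abelian_crl}; your Macdonald-based discussion of $\pi_1$ and the Dehn function (including the separate $n=1$ case) is an acceptable substitute for that part.

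The gap is in your asphericity argument. The Brill--Noether nonexistence statement you invoke ($\rho<0\Rightarrow W^1_n=\emptyset$) holds only for a \emph{general} curve of genus $g$, not for every curve. For instance, every hyperelliptic curve carries a $g^1_2$, hence (adding base points) a $g^1_n$ for every $n\geq 2$, so $W^1_n\neq\emptyset$ no matter how large $g$ is; thus $g\geq 2n-1$ does not force $W^1_n$ to be empty. For such a curve the Abel--Jacobi map $u_n$ is not injective (it contracts a holomorphic $\P^1\subset\textnormal{Sym}^n(\Sigma_g)$, e.g.\ the hyperelliptic sphere $\{x+\iota(x)\}$ when $n=2$), the pullback $u_n^*\omega_0$ is degenerate and so cannot be the symplectic form in question, and --- more seriously --- $\textnormal{Sym}^n(\Sigma_g)$ contains a holomorphic sphere representing a nonzero class in $H_2$, so asphericity of whatever ``natural'' form one uses is a genuine cohomological condition that your argument does not verify (indeed, no Kähler form for such a complex structure can be symplectically aspherical, so your identification of the natural structure with a Kähler restriction from the Jacobian cannot be correct in general). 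As written, your proof covers only Brill--Noether general curves, whereas the corollary is stated for every $\Sigma_g$ with $g\geq 2n-1$; the paper avoids the issue by taking precisely this asphericity statement, together with the computation of $\pi_1$, from \cite[Corollary~3.5 and Theorem~4.1]{DiC25}. To repair your route you would need either to restrict the claim to general curves, or to identify the image of $\pi_2(\textnormal{Sym}^n(\Sigma_g))$ in $H_2$ and check directly that the cohomology class of the natural symplectic form annihilates it for all curves in the stated range.
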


The proofs of Theorem~\ref{existence_thm} and Corollaries~\ref{surfaces_crl}--\ref{group_crl} are contained in Section~\ref{Nonhyperbolicity_sec}. In contrast to~\cite{Ban98} and following the suggestion of~\cite[Section IV]{Gro00}, our proof proceeds via a study of Lagrangian submanifolds. It is conceptually quite simple. To construct a complex line, it suffices to find an Ahlfors current~\cite[Théorème]{Duv08}. Therefore, we just need to find holomorphic disks satisfying reverse isoperimetric inequalities. Duval~\cite{Duv16} showed that such inequalities follow from the monotonicity of area density for holomorphic curves. We introduce a relative notion of (quasi)minimizing surface and use it to prove a general monotonicity formula (Proposition~\ref{monotonicity_prp}) which applies at large scales, from which Theorem~\ref{existence_thm} follows quickly.\\

It seems that the simplest manifold to which Theorem~\ref{existence_thm} does not apply is the Kodaira-Thurston manifold, the quotient of the Heisenberg group times $\R$ by a lattice. It is aspherical, but it seems likely (via the reasoning in \cite[Section~6]{You13}) that its homological Dehn function grows at least cubically (cf. \cite[Chapter~8]{Eps92}). Furthermore, this should be true for any symplectic nilmanifold that is not a torus. Nevertheless, it seems possible that the proof technique (especially Proposition~\ref{monotonicity_prp}) applied to a cleverly chosen Lagrangian would recover the result.

\begin{ques}
Let $J$ be an almost complex structure on the Kodaira-Thurston manifold $X$ which is tamed by an invariant symplectic form. Is there a nonconstant $J$-holomorphic map $u: \C \lra X$? More generally, does this hold for every symplectic nilmanifold?
\end{ques}

Our second point of interest is bubbling. Classically, the Brody reparameterization lemma \cites{Bro78,Duv17} allows one to extract a complex line out of a sequence of holomorphic maps whose derivatives blow up. This implies that hyperbolic manifolds are rigid. For example, Demailly \cite[Theorem~2.1]{Dem97} has shown in the integrable setting that the area of a holomorphic curve in a hyperbolic manifold is bounded linearly by its genus (equivalently, its Euler characteristic).\\

Bubbling of rational curves is a special case of this reparameterization procedure. Just like in the Brody lemma, rational curve bubbles arise as a source of flexibility (for instance, noncompactness of spaces of holomorphic curves). However, unlike complex lines, rational curves can be controlled topologically.  
This allows one to control the flexibility that comes from bubbling even in cases where rational curves exist.
Our second theorem is a refinement of Demailly's result in which we produce an Ahlfors current. These can be controlled topologically, and therefore allow aspects of hyperbolicity to be used even in nonhyperbolic manifolds.

\begin{thm}\label{bubbling_thm}
Let $(X,J)$ be a closed almost complex manifold. Let $\Sigma_i$ be a sequence of closed $J$-holomorphic curves in $X$. If
\begin{equation}\label{bubbling_thm_eq}
\lim_{i \rightarrow \infty}\frac{\textnormal{genus}(\Sigma_i)}{\textnormal{area}(\Sigma_i)} = 0,
\end{equation}
then a subsequence of $\Sigma_i/\textnormal{area}(\Sigma_i)$ converges to an Ahlfors current.
\end{thm}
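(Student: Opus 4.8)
The plan is to turn the genus-to-area hypothesis into a reverse isoperimetric-type estimate by cutting each $\Sigma_i$ into pieces that are disks. Recall that a closed $J$-holomorphic curve $\Sigma$ of genus $h$ can, after choosing a suitable handle decomposition (or a maximal collection of disjoint simple closed curves that cut $\Sigma$ to a planar surface, and then a further collection cutting the planar surface to a disk), be written as $u : D \to X$ from a closed disk with $\partial D$ mapping to a 1-cycle $\gamma$ whose length is controlled by the geometry of $X$ and by $h$. More precisely, I would argue that one can choose a system of $O(h)$ arcs/curves on $\Sigma_i$ along which to cut, each of length $O(1)$ relative to the ambient metric — here the key analytic input is a bound on the length of the shortest homotopically essential loop, or more simply a triangulation/handle estimate giving that the "slits" needed to open up $\Sigma_i$ into a disk have total length $O(\textnormal{genus}(\Sigma_i)) + O(1)$, independent of area. (Even the crude bound that cutting along $O(\text{genus})$ geodesic arcs of length bounded by the diameter of $X$ suffices, since $X$ is compact.) Then $\textnormal{area}(D_i) = \textnormal{area}(\Sigma_i)$ while $\textnormal{length}(\partial D_i) = O(\textnormal{genus}(\Sigma_i))$, so $\textnormal{length}(\partial D_i)/\textnormal{area}(D_i) \to 0$ by hypothesis~\eqref{bubbling_thm_eq}.

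Next, normalize: set $c_i = \textnormal{area}(\Sigma_i) = \textnormal{area}(D_i)$ and consider the currents $D_i/c_i$. These have mass bounded by $1$ (plus the negligible boundary contribution), so by weak-$*$ compactness of currents of bounded mass on the compact manifold $X$, a subsequence converges weakly to a $2$-current $T$. Since $\textnormal{length}(\partial D_i)/c_i \to 0$, the boundaries $\partial(D_i/c_i) \to 0$, so $T$ is a cycle, and it is nonzero because each $D_i/c_i$ is a positive current pairing to a definite quantity against the ambient symplectic (or taming) form $\omega$: indeed $\langle D_i/c_i, \omega\rangle = \textnormal{area}_\omega(\Sigma_i)/\textnormal{area}_g(\Sigma_i)$, which is bounded below by a positive constant depending only on the taming inequality between $\omega$ and $g$. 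Hence $T \neq 0$. By Definition~\ref{Ahlfors_current_dfn}, the sequence $D_i$ (with $\textnormal{length}(\partial D_i)/\textnormal{area}(D_i)\to 0$ and $D_i/\textnormal{area}(D_i) \to T$) exhibits $T$ as an Ahlfors current, and since $D_i/\textnormal{area}(D_i)$ and $\Sigma_i/\textnormal{area}(\Sigma_i)$ agree as currents, the subsequence of $\Sigma_i/\textnormal{area}(\Sigma_i)$ also converges to $T$.

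The main obstacle is the cutting step: producing, on each $\Sigma_i$, an explicit cut system whose total length is $O(\textnormal{genus}(\Sigma_i))$ uniformly — i.e. with a constant independent of $i$ and of $\textnormal{area}(\Sigma_i)$. The naive worry is that a high-area, low-genus curve could be very "spread out," forcing long cuts; but since $X$ is compact one can always cut along geodesic segments of length at most $\textnormal{diam}(X)$, and a genus-$h$ surface needs only $2h$ (or $2h+1$, to also reduce one boundary-free planar piece to a disk) such cuts to become a disk, so the total is $\le (2h+1)\textnormal{diam}(X) = O(h)$. Making this precise — choosing the cut curves to be rectifiable and the resulting $u_i : D_i \to X$ genuinely a $J$-holomorphic disk in the sense of Definition~\ref{Ahlfors_current_dfn} (one may need to pass to the normalization and smooth corners, or simply work with the disk as an integral current rather than a smooth map) — is the technical heart of the argument; everything after it is the standard compactness-of-currents packaging used throughout the paper.
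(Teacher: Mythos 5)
The weak-compactness packaging at the end is fine (though note the paper does not assume a symplectic taming form; nontriviality of the limit follows already by pairing with the fundamental $2$-form of the almost Hermitian metric). The genuine gap is exactly the step you flag as the heart of the argument: the claim that $\Sigma_i$ can be cut open to a disk along $O(\textnormal{genus}(\Sigma_i))$ curves of total length $O(\textnormal{genus}(\Sigma_i))$, with constant independent of $\textnormal{area}(\Sigma_i)$. The cut curves must lie \emph{on} the surface $\Sigma_i$, and the length relevant to Definition~\ref{Ahlfors_current_dfn} is measured in the pullback (induced) metric, not by chordal distance in $X$; ``geodesic segments of length at most $\textnormal{diam}(X)$'' are not available, because short ambient geodesics between points of $\Sigma_i$ do not lie on $\Sigma_i$, and the intrinsic systole and diameter of the induced metric are in no way bounded by $\textnormal{diam}(X)$ (a high-area curve is typically intrinsically enormous even though $X$ is compact). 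The best general bounds of this type are systolic: essential loops, and a fortiori a full cut system, can only be bounded in terms of $\sqrt{\textnormal{area}}$, giving total cut length on the order of $\textnormal{genus}\cdot\sqrt{\textnormal{area}}$ at best, and then $\textnormal{length}(\partial D_i)/\textnormal{area}(D_i)\lesssim \textnormal{genus}/\sqrt{\textnormal{area}}$, which does \emph{not} tend to $0$ under the hypothesis (\ref{bubbling_thm_eq}) (take $\textnormal{genus}\sim\textnormal{area}^{0.9}$). So the argument as written does not close, and no purely combinatorial choice of cut system can be expected to.

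The paper avoids cutting altogether. It uniformizes each $\Sigma_i$ (genus $>1$) by the hyperbolic metric $h$, writes the induced metric as $g=\lambda^2h$, and takes the disks to be the images of large hyperbolic balls in the universal cover under the covering map. Equidistribution of large hyperbolic circles and balls (Lemma~\ref{equidistribution_circles_lmm} and Corollary~\ref{equidistribution_disks_crl}), together with Jensen's inequality and Gauss--Bonnet, gives the quantitative bound of Proposition~\ref{averaging_prp}:
\begin{equation*}
\limsup_{j\to\infty}\frac{\textnormal{length}\big(u_j(\partial D)\big)}{\textnormal{area}\big(u_j(D)\big)}
\;\leq\;\sqrt{\frac{4\pi(\textnormal{genus}(\Sigma_i)-1)}{\textnormal{area}(\Sigma_i)}},
\end{equation*}
with the disks equidistributing to $\Sigma_i/\textnormal{area}(\Sigma_i)$. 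This square-root dependence is precisely what makes the hypothesis $\textnormal{genus}/\textnormal{area}\to 0$ sufficient, and it is the ingredient your proposal is missing. If you want to salvage a cutting-style proof, you would need cut systems of total induced length $o(\textnormal{area}(\Sigma_i))$, and producing these is essentially equivalent to the averaging estimate the paper proves.
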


If $(X,J)$ is an almost complex manifold and $A \in H_2(X;\Z)$, we let $\ov{\mathfrak{M}}_g(A;J)$ denote the moduli space of stable $J$-holomorphic curves of genus $g$ representing $A$.

\begin{crl}\label{GW_crl}
Let $(X,J)$ be a closed almost complex manifold. If no Ahlfors current is null-homologous, then the moduli spaces $\ov{\mathfrak{M}}_g(A;J)$ are compact and Gromov-Witten type invariants enumerating $J$-holomorphic curves can be defined.
\end{crl}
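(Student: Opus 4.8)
The plan is to reduce everything to one uniform area estimate: for each genus $g$ and class $A$, the area of every stable $J$-holomorphic curve of genus $g$ representing $A$ is bounded by a constant $C(g,A)$. Granting this, Gromov's compactness theorem applies to sequences in $\ov{\mathfrak{M}}_g(A;J)$ --- in the form needed it requires only an almost Hermitian structure together with an area bound, the relevant monotonicity lemma for $J$-holomorphic curves being valid in that generality --- so $\ov{\mathfrak{M}}_g(A;J)$ is sequentially compact, the combinatorial type of any Gromov limit being controlled by $C(g,A)$ and $g$. (When $J$ is tamed by a symplectic form $\omega$ one has $\int u^*\omega \geq c\,\textnormal{area}(u)$ for every $J$-holomorphic $u$, hence $\langle[\omega],[T]\rangle \geq c > 0$ for every Ahlfors current $T$; the hypothesis then holds automatically and the corollary recovers the usual compactness of $\ov{\mathfrak{M}}_g(A;J)$.) Once these moduli spaces are compact, the standard Fredholm package for pseudoholomorphic curves --- Kuranishi structures, polyfolds, or implicit atlases, after Li--Tian, Fukaya--Ono, Hofer--Wysocki--Zehnder, and Pardon --- endows them with virtual fundamental classes, and pairing these with pullbacks along evaluation maps and with $\psi$-classes defines Gromov--Witten type invariants.

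So it remains to establish the area bound, which is exactly where Theorem~\ref{bubbling_thm} is used. Suppose the bound fails for some $(g,A)$, so that there are stable $J$-holomorphic curves $\Sigma_i$ of arithmetic genus $g$ representing $A$ with $\textnormal{area}(\Sigma_i)\to\infty$. As the genus is constant, $\textnormal{genus}(\Sigma_i)/\textnormal{area}(\Sigma_i)=g/\textnormal{area}(\Sigma_i)\to 0$, so after passing to a subsequence Theorem~\ref{bubbling_thm} yields an Ahlfors current $T$ with $\Sigma_i/\textnormal{area}(\Sigma_i)\to T$ weakly. Each $\Sigma_i$ pushes forward to an integral $2$-cycle of mass $\textnormal{area}(\Sigma_i)$ representing $A$, so $\Sigma_i/\textnormal{area}(\Sigma_i)$ is a cycle of mass $1$ whose class in $H_2(X;\R)$ is $A/\textnormal{area}(\Sigma_i)$; pairing with an arbitrary closed $2$-form and using weak convergence of currents shows that $T$ represents $\lim_i A/\textnormal{area}(\Sigma_i)=0$. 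Thus $T$ is a null-homologous Ahlfors current, contradicting the hypothesis. This proves the area bound, and the corollary follows.

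The main obstacle is the honest justification of two points tacitly used in the last step. First, Theorem~\ref{bubbling_thm} must still produce an \emph{Ahlfors} current when the approximating objects are stable maps with nodal domains rather than smooth curves; this should cause no trouble, since that theorem's construction extracts the Ahlfors disks by a metric and current-theoretic procedure insensitive to nodes and to the decomposition into components, but one should check that ``area'' and ``genus'' in \eqref{bubbling_thm_eq} are read as the total area and the arithmetic genus. Second, the weak limit $T$ must genuinely have homology class $0$: an Ahlfors current can a priori carry strictly less mass than the limit of the masses of its approximants, so one wants to note explicitly that, on the closed manifold $X$, weak convergence of currents is tested against all smooth --- in particular all closed --- forms, whence homology classes vary continuously and the possible loss of mass does not corrupt the cohomological conclusion.
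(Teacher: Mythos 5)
Your proposal follows essentially the same route as the paper: assume the area bound fails for some $(g,A)$, apply Theorem~\ref{bubbling_thm} to the resulting sequence (genus fixed, area unbounded) to produce a null-homologous Ahlfors current, contradicting the hypothesis, and then invoke Gromov compactness and the virtual-class machinery of Li--Tian and Pardon to define the invariants. Your additional remarks --- that homology classes pass to weak limits despite possible mass loss, and the care about nodal domains --- are correct refinements of points the paper leaves implicit.
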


To use Theorem~\ref{bubbling_thm}, one must be able to control the homology classes of Ahlfors currents. The following notion, due to \cite{Gro91}, is useful in this regard. 

\begin{dfn}\label{hyperbolic_form_dfn}
Let $X$ be a closed manifold. A closed $k$-form~$\al$ on~$X$ is \textit{hyperbolic} if there exists
a $(k\!-\!1)$-form~$\wt\be$ on the universal cover~$\wt{X}$ which is bounded
with respect to a metric pulled back from~$X$ and satisfies
$d\wt\be\!=\!\pi^*\al$, where \hbox{$\pi\!:\wt{X}\!\lra\!X$} is the covering map.
\end{dfn}

Whether a form is hyperbolic depends only on its cohomology class, so it is a topological condition. The following corollary allows one to extract geometric information from hyperbolic forms.

\begin{crl}\label{partial_hyperbolicity_crl}
Let $(X,J)$ be a closed almost complex manifold. Let $\alpha$ be a hyperbolic 2-form and $\omega$ be a symplectic form taming $J$. For every $\varepsilon > 0$, there exists $C > 0$ such that
\begin{equation}\label{partial_hyp_ineq}
\int_\Sigma \omega < C \cdot \textnormal{genus}(\Sigma)
\end{equation}
for every closed $J$-holomorphic curve $\Sigma$ in $X$ satisfying $\int_\Sigma \alpha > \varepsilon \int_\Sigma \omega.$
\end{crl}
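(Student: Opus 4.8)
The plan is to argue by contradiction, feeding Theorem~\ref{bubbling_thm} with a suitable sequence of curves and then appealing to the following fact, which is the only place the hyperbolicity of $\alpha$ is used: \emph{if $T$ is an Ahlfors current on a closed manifold $X$ and $\alpha$ is a hyperbolic $2$-form, then $\langle T,\alpha\rangle=0$}. To prove it, take $J$-holomorphic disks $D_i$ as in Definition~\ref{Ahlfors_current_dfn}, so that $\textnormal{length}(\partial D_i)/\textnormal{area}(D_i)\to 0$ and $D_i/\textnormal{area}(D_i)\to T$ weakly. Each $D_i$ is simply connected, hence lifts to a disk $\wt D_i$ in the universal cover $\wt X$; with $\wt\beta$ a bounded primitive of $\pi^*\alpha$ as in Definition~\ref{hyperbolic_form_dfn}, Stokes' theorem gives $\int_{D_i}\alpha=\int_{\wt D_i}d\wt\beta=\int_{\partial\wt D_i}\wt\beta$, so $\bigl|\int_{D_i}\alpha\bigr|\le\|\wt\beta\|_\infty\,\textnormal{length}(\partial D_i)$ (lengths are preserved by the covering map for the pulled-back metric). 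Dividing by $\textnormal{area}(D_i)$ and passing to the limit, $|\langle T,\alpha\rangle|\le\|\wt\beta\|_\infty\lim_i\textnormal{length}(\partial D_i)/\textnormal{area}(D_i)=0$.

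Next, suppose the corollary fails for some $\varepsilon>0$: for each $i$ there is a closed $J$-holomorphic curve $\Sigma_i$ with $\int_{\Sigma_i}\alpha>\varepsilon\int_{\Sigma_i}\omega$ and $\int_{\Sigma_i}\omega\ge i\cdot\textnormal{genus}(\Sigma_i)$. Since $X$ is compact and $\omega$ tames $J$, a pointwise comparison on $J$-complex planes yields constants $0<c_0\le c_1$ with $c_0\,\textnormal{area}(\Sigma)\le\int_\Sigma\omega\le c_1\,\textnormal{area}(\Sigma)$ and $\bigl|\int_\Sigma\alpha\bigr|\le c_1\,\textnormal{area}(\Sigma)$ for every $J$-holomorphic curve $\Sigma$. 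Hence $\textnormal{genus}(\Sigma_i)/\textnormal{area}(\Sigma_i)\le c_1/i\to 0$, so condition~\eqref{bubbling_thm_eq} holds and Theorem~\ref{bubbling_thm} provides a subsequence along which $\Sigma_i/\textnormal{area}(\Sigma_i)$ converges weakly to an Ahlfors current $T$. (Note the hypothesis already forces $\textnormal{genus}(\Sigma_i)>0$: a $J$-holomorphic $2$-sphere lifts to $\wt X$, so the same Stokes computation with empty boundary gives $\int_{\Sigma_i}\alpha=0<\varepsilon\int_{\Sigma_i}\omega$, which also explains why the inequality~\eqref{partial_hyp_ineq} is not vacuously contradictory.)

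To finish, pass to a further subsequence so that $\int_{\Sigma_i}\omega/\textnormal{area}(\Sigma_i)\to a$ and $\int_{\Sigma_i}\alpha/\textnormal{area}(\Sigma_i)\to b$ (both sequences bounded by $c_1$). Weak convergence identifies $a=\langle T,\omega\rangle$ and $b=\langle T,\alpha\rangle$, and $a\ge c_0>0$, so in particular $T\ne 0$. Dividing $\int_{\Sigma_i}\alpha>\varepsilon\int_{\Sigma_i}\omega$ by $\textnormal{area}(\Sigma_i)$ and letting $i\to\infty$ gives $b\ge\varepsilon a\ge\varepsilon c_0>0$, contradicting $\langle T,\alpha\rangle=0$ from the displayed fact. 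This contradiction proves the corollary.

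The one genuinely load-bearing step is the displayed fact, and inside it the single use of hyperbolicity: boundedness of the primitive $\wt\beta$ on $\wt X$ converts the a priori unbounded periods $\int_{D_i}\alpha$ into a quantity dominated by $\textnormal{length}(\partial D_i)$, which is negligible against $\textnormal{area}(D_i)$ by the very definition of an Ahlfors current. It is essential that the approximating objects in Definition~\ref{Ahlfors_current_dfn} are genuine $2$-disks, hence liftable to the universal cover; for higher-genus subsurfaces this fails, so it is precisely the disk structure of Ahlfors currents that makes the argument work rather than any attempt to integrate $\wt\beta$ over (pieces of) the $\Sigma_i$ directly. Everything else — the taming comparisons, the diagonal extraction of subsequences, and the strictness $T\ne 0$ — is routine bookkeeping.
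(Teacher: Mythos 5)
Your proposal is correct and follows essentially the same route as the paper: the paper isolates your ``displayed fact'' as Lemma~\ref{hyperbolic_form_lmm} (proved exactly by lifting disks to the universal cover and applying Stokes' theorem to the bounded primitive $\wt\be$), and then derives the corollary by the same contradiction argument via Theorem~\ref{bubbling_thm}. Your write-up merely makes explicit the bookkeeping the paper leaves implicit (the taming comparison giving $\textnormal{genus}(\Sigma_i)/\textnormal{area}(\Sigma_i)\to 0$ and the normalization showing $T(\alpha)\geq\varepsilon T(\omega)>0$), which is fine.
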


We note that inequality ({\ref{partial_hyp_ineq}}) above is the reverse of the Castelnuovo bound in \cite[Theorem~1.6]{Doa21}, of interest due to its relation to the Gopakumar-Vafa finiteness conjecture.\\

Theorem~\ref{bubbling_thm} and Corollaries~\ref{GW_crl} and \ref{partial_hyperbolicity_crl} are proved in Section~\ref{bubbling_sec}. The theorem follows from an averaging trick. It works as follows. The genus of a surface is approximately its intrinsic (hyperbolic) area. If the ratio of its induced area and intrinsic area blows up, then the area distortion of the map is high, on average. Therefore, the average disk in the universal cover will also have high area distortion and thus large area. One must also control the boundary length. That this is also amenable to an averaging trick follows from the equidistribution of spheres in hyperbolic geometry.\\

Our final theorem concerns the moduli space of Ahlfors currents. Here, the situation is quite different from that of rational curves. Rational curves are solutions to a (nonlinear) elliptic PDE and therefore lie in finite-dimensional moduli spaces. Ahlfors currents, on the other hand, naturally lie in a linear space, and so different methods are appropriate. The next theorem shows that the space of Ahlfors currents is well-behaved in a linear sense.

\begin{thm}\label{convexity_thm}
Let $(X,J)$ be a closed connected almost complex manifold. The space of Ahlfors currents in $X$ is convex and weakly compact.
\end{thm}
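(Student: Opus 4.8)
The plan is to handle the two assertions separately: weak compactness is essentially formal, while convexity calls for a cut-and-paste construction of disks.

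For weak compactness, I would first note that a $J$-holomorphic disk $u:D\to X$ has $\mathbf M(u_*[D])=\mathrm{area}(u)$, since the fundamental $2$-form $\omega(\cdot,\cdot)=g(J\cdot,\cdot)$ has comass one and restricts to the area form on $J$-invariant $2$-planes. Hence every normalized disk $D_i/\mathrm{area}(D_i)$ has mass $1$, and by lower semicontinuity of mass under weak convergence the set $\mathcal A$ of Ahlfors currents lies in the ball $B=\{T:\mathbf M(T)\le 1\}$, which --- being a bounded subset of the dual of the separable space of smooth $2$-forms on the compact manifold $X$ --- is weak-$*$ compact and metrizable. It thus suffices to show that $\mathcal A$ is weakly closed: given $T_k\in\mathcal A$ with $T_k\to T$, choose for each $k$ a $J$-holomorphic disk $D^k$ with $\mathrm{length}(\partial D^k)/\mathrm{area}(D^k)<1/k$ and with $D^k/\mathrm{area}(D^k)$ within distance $1/k$ of $T_k$ for a fixed metric inducing the weak-$*$ topology on $B$; then $\{D^k\}$ realizes $T$ as an Ahlfors current.

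For convexity, it is enough to show that if $T_0,T_1\in\mathcal A$ are realized by disk sequences $\{u_i\}$, $\{v_j\}$ and $t\in(0,1)$, then $(1-t)T_0+tT_1\in\mathcal A$ (the endpoints being trivial). The construction rests on two operations. The first is area rescaling: precomposing a $J$-holomorphic disk with a degree-$d$ branched self-cover $D\to D$ multiplies area and boundary length by $d$ and leaves the normalized current unchanged. The second is the amalgamation of two $J$-holomorphic disks into a single one: given disks $u,v$, pick boundary points $a,b$ of their domains, join $u(a)$ to $v(b)$ by a chain of small $J$-holomorphic disks (which exist locally through every point and whose number is bounded independently of $u,v$ because $X$ is compact), and form the iterated boundary connected sum. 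The essential topological observation is that a boundary connected sum of disks is again a disk --- gluing at interior nodes would instead yield an annulus. One then restores $J$-holomorphicity by the usual pregluing-and-correction procedure at each boundary node; since the necks may be placed in the nearly flat regions surrounding the small connecting disks, the relevant Cauchy--Riemann estimates are of the standard type and uniform in the gluing parameter, and the resulting genuine $J$-holomorphic disk has area, boundary length, and integration current differing from those of the pre-glued configuration by quantities that tend to $0$ as the gluing parameters shrink.

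Putting this together, I would fix $r=(1-t)/t$, choose $i_n\to\infty$, then choose $j_n\to\infty$ with $\mathrm{area}(v_{j_n})/\mathrm{area}(u_{i_n})\to\infty$, and take $\tilde u_n$ to be the branched cover of $u_{i_n}$ of degree $\lceil r\,\mathrm{area}(v_{j_n})/\mathrm{area}(u_{i_n})\rceil$; then $\tilde u_n/\mathrm{area}(\tilde u_n)\to T_0$, $\mathrm{length}(\partial\tilde u_n)/\mathrm{area}(\tilde u_n)\to 0$, and $\mathrm{area}(\tilde u_n)/\mathrm{area}(v_{j_n})\to r$. Amalgamating $\tilde u_n$ with $v_{j_n}$ through a bounded chain of small disks yields disks $w_n$ with $\mathrm{area}(w_n)=\mathrm{area}(\tilde u_n)+\mathrm{area}(v_{j_n})+O(1)$ and $\mathrm{length}(\partial w_n)=\mathrm{length}(\partial\tilde u_n)+\mathrm{length}(\partial v_{j_n})+O(1)$; the bounded contributions and the gluing correction wash out after dividing by area, so $w_n/\mathrm{area}(w_n)\to\frac{r}{r+1}T_0+\frac{1}{r+1}T_1=(1-t)T_0+tT_1$, while $\mathrm{length}(\partial w_n)/\mathrm{area}(w_n)\le\mathrm{length}(\partial\tilde u_n)/\mathrm{area}(\tilde u_n)+\mathrm{length}(\partial v_{j_n})/\mathrm{area}(v_{j_n})+o(1)\to 0$. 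The hard part is the amalgamation step --- executing the $J$-holomorphic gluing with genuine control on area, boundary length, and the limiting current despite the given disks $u_i,v_j$ not being assumed regular --- which is precisely why the connection is routed through small auxiliary disks, whose local geometry one controls directly.
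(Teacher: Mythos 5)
Your compactness argument is fine (the paper simply cites \cite[Lemma~26.14]{Sim83} for that half), and the skeleton of your convexity argument --- reweight the two disks, connect them through a chain of small $J$-holomorphic disks whose number is bounded by compactness of $X$, glue, and track length/area --- is the same as the paper's (Proposition~\ref{reduction_to_surface_case_prp}); your branched-cover reweighting is a legitimate substitute for the wrapping numbers in Lemma~\ref{spectacles_lmm}. The genuine gap is the amalgamation step. To correct the preglued configuration to an honest $J$-holomorphic map by ``the usual pregluing-and-correction procedure'' you need a uniformly bounded right inverse of the linearized Cauchy--Riemann operator of the \emph{whole} configuration, and nothing in your setup provides one: the given disks $u_i,v_j$ carry no regularity assumption, and your reweighted disks $\tilde u_n$ are branched multiple covers, exactly the maps for which surjectivity of the linearization typically fails. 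Routing the connection through small auxiliary disks and placing the necks in ``nearly flat regions'' does not help, because the cokernel is a global object and can be carried entirely by the large components. In addition, gluing at boundary nodes with no boundary condition is not a standard Fredholm problem; one must first choose a totally real submanifold containing the boundary images (as the paper does, after an embedding/graph reduction) even to set up the estimates.

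The paper's way around the regularity problem is precisely the ingredient you are missing: by Lemma~\ref{perturbation_lmm} (unique continuation), one adds a finite-dimensional space of perturbations of the $\overline{\partial}$-equation supported in a prescribed open set $U$ meeting every component, runs the gluing of \cite{Li98} for the perturbed equation $\overline{\partial}w_i\in S_U$, and then \emph{discards} $U$: on $\Sigma_i'-U$ the glued maps are genuinely $J$-holomorphic. This is only possible because the output of the gluing is not required to be a disk --- it is a connected bordered surface containing near-copies of both original disks --- and that is exactly why Lemma~\ref{spectacles_lmm} is then needed, to produce holomorphic disks inside the glued surface realizing the weights $t$ and $1-t$. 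Your construction insists on obtaining a disk directly from the gluing, which forecloses this discard-the-perturbed-region trick and leaves you needing surjectivity of the linearization at arbitrary (indeed multiply covered) disks, which is not available. So the step you yourself flag as the hard part is not actually justified as stated.
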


It follows immediately from Theorem~\ref{convexity_thm} that the space of homology classes of Ahlfors currents and the space of Ahlfors currents lying in a given homology class are both also compact and convex. In light of the Krein-Milman theorem \cite[Theorem~3.23]{Rud91}, these sets can be understood in terms of their extreme points (cf. \cite[Section~1]{Phe01}). This was explored in the setting of rational curves in \cite{Rua93}. 
Example~\ref{extreme_points_eg} below demonstrates that, in simple cases, the space of extreme points contains the same information as the space of rational curves.

\begin{eg}\label{extreme_points_eg}
Let $X$ be a closed complex manifold which is a holomorphic $S^2$-bundle over a Kähler manifold, equipped with a Hermitian metric such that the fibers have unit area. Then, the space of extreme points of the cone of Ahlfors currents in the fiber class $F$ is the space $\ov{\mathfrak{M}}_0(F;J)$.
\end{eg}

\begin{ques}
What symplectic invariants can be extracted from the space of Ahlfors currents?
\end{ques}

The weak compactness part of Theorem~\ref{convexity_thm} is well-known; see~\cite[Lemma 26.14]{Sim83}, for example. We establish the convexity part of this theorem, as well as the claim in Example~\ref{extreme_points_eg}, in Section~\ref{convexity_sec}. The convexity claim is proved by a gluing argument. The main point is that having a nonempty boundary, but no boundary conditions, makes $J$-holomorphic disks flexible (cf. \cite{Suk12}).\\

\textbf{Acknowledgements.}  The author would like to thank G.~Antonelli, S.~Diverio, M.~Gromov, M.~Khuri, H.~B.~Lawson, E.~Murphy, H.~L.~Tanaka, D.~Varolin, R.~Young, and F.~Zheng for helpful conversations, A.~Zinger for his invaluable guidance during the writing of this manuscript, and the anonymous reviewers for helpful comments. The author also appreciates an anonymous comment pointing to some resemblance with the results of~\cite{Bio04}, which has been addressed.

\section{Existence of Ahlfors currents}\label{Nonhyperbolicity_sec}
This section is devoted to the proof of Theorem~\ref{existence_thm} and its corollaries. In order to construct a complex line, one must show the existence of holomorphic disks, as in Definition~\ref{Ahlfors_current_dfn}, satisfying arbitrarily strong length-area estimates. The existence of holomorphic disks (with Lagrangian boundary) is well-studied in symplectic topology. In our setting, none of the usual technical difficulties arise and disks can be shown to exist via a simple continuity argument. This is the content of Lemma~\ref{existence_of_disks_lmm}. Therefore, our primary focus is on the length-area estimates for $J$-holomorphic disks. Duval has shown in \cite{Duv16} (dramatically simplifying the proof in \cite{Gro14}) that such estimates follow from the monotonicity of the area density of a pseudoholomorphic curve. The larger the scale, the stronger the estimates obtained in \cite{Duv16}, but the argument in \cite{Duv16} works only on sufficiently small scales. As we need arbitrarily strong length-area estimates, we need a robust understanding of how the monotonicity lemma applies at large scales. This is the content of Section~\ref{Monotonicity_sec}. The necessary background is collected in Section~\ref{Filling_sec}.\\

The proof of Theorem~\ref{existence_thm} can be summarized as follows. The fundamental group of a compact manifold controls the filling function of its universal cover (Lemma~\ref{quasiisometry_invariance_lmm}). A filling function induces a relative filling function for well-behaved subsets (Lemma~\ref{relative_from_filling_lmm}). A relative filling function yields a monotonicity formula (Proposition~\ref{monotonicity_prp}). Upon shrinking the disk as in Lemma~\ref{isoperimetry_from_coarse_isoperimetry_lmm}, we obtain the desired length-area estimates, which become arbitrarily strong at large scales; see Remark~\ref{asymptotic_isoperimetry_rmk}. Finally, Lemma~\ref{existence_of_disks_lmm} implies the existence of disks to which these estimates can be applied.

\subsection{Filling functions}\label{Filling_sec}
This section recalls the necessary background conerning filling functions and homological Dehn functions. Throughout, we use (integer multiplicity) $n$-rectifiable currents with finite mass. For brevity, we will often refer to them as chains, currents, or cycles (when they are closed). Our primary reference is~\cite[Chapter~6]{Sim83}. We denote the mass of a current $C$ by $|C|$, its boundary by $\partial C$, and its restriction to a Borel subset $B$ by $C\res B$. We note that every pseudoholomorphic curve $C$ is 2-rectifiable and the area of $C$ equals its mass $|C|$. The following lemma describes how to decompose an $n$-current into a family of $(n-1)$-currents.

\begin{lmm}[{\cite[Lemma~28.5]{Sim83}}]\label{slicing_lmm}
Let $(X,g)$ be a Riemannian manifold, $C$ be an $n$-rectifiable current on $X$, and $f: X \lra \R$ be a $1$-Lipschitz function. Then, for almost every $r \in \R$, there is an $(n-1)$-rectifiable current $C_r(f)$ supported on $f^{-1}(r)$ such that for all $a,b \in \R$ with $a\leq b$,
\begin{equation}\label{slicing_formulas}
\begin{split}
\big|C \res f^{-1}([a,b])\big| &\geq \int_a^b |C_r(f)|dr \quad
\textnormal{and}\\ \partial\big(C \res f^{-1}(-\infty,r)\big) &= \partial C \res f^{-1}(-\infty,r) + C_r(f).
\end{split}
\end{equation}
\end{lmm}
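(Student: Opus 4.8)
The plan is to follow the standard slicing theory for rectifiable currents, combining the coarea formula on rectifiable sets with a mollification argument for the boundary identity. Throughout I work with a fixed rectifiable representation $C = \tau(M,\theta,\xi)$, where $M$ is an $\mathcal{H}^n$-measurable countably $n$-rectifiable set, $\theta$ a positive integer-valued multiplicity with $\int_M \theta\,d\mathcal{H}^n = |C| < \infty$, and $\xi(x)$ a unit simple $n$-vector orienting the approximate tangent plane $T_xM$ for $\mathcal{H}^n$-a.e.\ $x$; one also assumes $\partial C$ is representable by integration so that the restrictions in $(\ref{slicing_formulas})$ make sense (this holds in all cases of interest here, e.g.\ pseudoholomorphic curves).

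First I would extract the mass estimate and the rectifiability of the slices directly from the coarea formula. Since $f$ is $1$-Lipschitz, so is $f|_M$, hence the tangential gradient satisfies $|\nabla^M f| \leq 1$ $\mathcal{H}^n$-a.e. The coarea formula for Lipschitz maps on rectifiable sets then gives that for a.e.\ $r$ the slice $M_r := M \cap f^{-1}(r)$ is countably $(n-1)$-rectifiable with $\int_{M_r}\theta\,d\mathcal{H}^{n-1} < \infty$, and
\begin{equation*}
\int_a^b \Big(\int_{M_r}\theta\,d\mathcal{H}^{n-1}\Big)\,dr \;=\; \int_{M \cap f^{-1}([a,b])}\theta\,|\nabla^M f|\,d\mathcal{H}^n \;\leq\; \big|C\res f^{-1}([a,b])\big|.
\end{equation*}
For such $r$ I then define $C_r(f) := \tau(M_r,\theta,\xi_r)$, where $\xi_r(x)$ is the unit simple $(n-1)$-vector orienting $T_xM_r = T_xM \cap \ker df_x$ fixed by the convention $\nu_x \wedge \xi_r(x) = \xi(x)$ with $\nu_x$ the unit vector in $T_xM$ along $\nabla^M f(x)$ (for a.e.\ $r$, $\nabla^M f \neq 0$ on $M_r$). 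This is an $(n-1)$-rectifiable current supported in $f^{-1}(r)$ with $|C_r(f)| \leq \int_{M_r}\theta\,d\mathcal{H}^{n-1}$, so the displayed inequality yields the first formula in $(\ref{slicing_formulas})$ and the support claim.

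The substantive part is the boundary identity, which I would prove by mollification in $r$. Choosing smooth nondecreasing $\gamma_\varepsilon:\R\to[0,1]$ with $\gamma_\varepsilon \to \mathbf{1}_{(-\infty,r)}$ pointwise and $\gamma_\varepsilon' \rightharpoonup \delta_r$, and using the product rule $\partial\big((\gamma_\varepsilon\circ f)\,C\big) = (\gamma_\varepsilon\circ f)\,\partial C - (\gamma_\varepsilon'\circ f)\,df \res C$ (valid since $\gamma_\varepsilon\circ f$ is Lipschitz, after a preliminary mollification of $f$ if desired), one lets $\varepsilon\to 0$: the left-hand side converges to $\partial\big(C \res f^{-1}(-\infty,r)\big)$ and the first term on the right to $(\partial C)\res f^{-1}(-\infty,r)$, for a.e.\ $r$. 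It remains to identify the limit of $(\gamma_\varepsilon'\circ f)\,df\res C$; testing against a form and averaging against $h(r)\,dr$, the coarea formula in Fubini form over $M$ identifies this averaged limit with $\int h(r)\,C_r(f)\,dr$, which pins down $C_r(f) = \partial\big(C \res f^{-1}(-\infty,r)\big) - (\partial C)\res f^{-1}(-\infty,r)$ at every Lebesgue point $r$, i.e.\ for a.e.\ $r$. The monotonicity of $r \mapsto |C\res f^{-1}(-\infty,r)|$ ensures the family $r \mapsto C \res f^{-1}(-\infty,r)$ has bounded variation in the weak topology, so its distributional derivative exists as a current-valued measure, and the coarea computation shows that measure is absolutely continuous with density $C_r(f)$.

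The hard part is precisely this last identification: matching the distributional $r$-derivative of the family of restrictions with the explicitly defined rectifiable slices, which requires care with the a.e.-in-$r$ exceptional sets arising from the coarea formula and with interchanging the $\varepsilon\to 0$ limit with integration in $r$. Once that is in place, the mass bound, the support statement, and rectifiability are all immediate consequences of the coarea formula together with $\mathrm{Lip}(f) = 1$.
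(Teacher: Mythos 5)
The paper gives no proof of this lemma at all—it is quoted directly from \cite[Lemma~28.5]{Sim83}—and your sketch follows essentially the same standard route as that reference: the coarea formula on the rectifiable carrier for the mass inequality, rectifiability, and support of the slices, plus the mollified product rule with a Lebesgue-point identification of the slice as the boundary defect of the restriction. Your outline is correct, including the (implicit, and satisfied in every application in the paper) hypothesis that $\partial C$ is representable by integration so that the restriction $\partial C \res f^{-1}(-\infty,r)$ makes sense.
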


Let $(X,g)$ be a Riemannian manifold, $d$ be the induced metric on $X$, and $L \subset X$ be nonempty. We define the \textit{distance function to $L$} by
$$d_L : X \lra {[}0,\infty), \quad d_L(p) := \inf_{x \in L} \{d(p,x)\}.$$
The triangle inequality implies that $d_L$ is $1$-Lipschitz.  This function will frequently take the place of $f$ in Lemma~\ref{slicing_lmm} in subsequent arguments. For $R \in (0,\infty{]}$, we denote the set $d_L^{-1}\big([0,R{)}\big)$ by $B_R(L)$.

\begin{dfn}\label{filling_function}
Let $(X,g)$ be a Riemannian manifold. A function $\varphi: {[}0,\infty) \lra {[}0,\infty)$ is a \textit{filling function} for $(X,g)$ if, for every $\varepsilon > 0$, every (not necessarily connected) 1-rectifiable cycle $\gamma$ in $X$ bounds a 2-rectifiable current~$C$ with $|C| < \varphi(|\gamma|) + \varepsilon$. The smallest such function is called the \textit{optimal filling function} for $X$.
\end{dfn}

We recall the following notion standard in geometric group theory. Let $f, g: {[}0,\infty) \lra {[}0,\infty)$ be functions. We say a $f$ dominates $g$ if there exists $C > 0$ such that
$$Cf(Cr + C) + C \geq g(r) \quad \forall r \in {[}0,\infty).$$
We say $f$ and $g$ are equivalent if $f$ dominates $g$ and $g$ dominates $f$.

\begin{figure}
\centering

\tikzset{every picture/.style={line width=0.75pt}} 

\begin{tikzpicture}[x=0.75pt,y=0.75pt,yscale=-.75,xscale=.75]

\draw  [line width=2.25]  (90,65) .. controls (90,51.19) and (134.77,40) .. (190,40) .. controls (245.23,40) and (290,51.19) .. (290,65) .. controls (290,78.81) and (245.23,90) .. (190,90) .. controls (134.77,90) and (90,78.81) .. (90,65) -- cycle ;
\draw    (90,65) -- (190,255) ;
\draw [shift={(190,255)}, rotate = 62.24] [color={rgb, 255:red, 0; green, 0; blue, 0 }  ][fill={rgb, 255:red, 0; green, 0; blue, 0 }  ][line width=0.75]      (0, 0) circle [x radius= 3.35, y radius= 3.35]   ;
\draw    (290,65) -- (190,255) ;
\draw  [line width=2.25]  (452,66) .. controls (452,56.61) and (477.07,49) .. (508,49) .. controls (538.93,49) and (564,56.61) .. (564,66) .. controls (564,75.39) and (538.93,83) .. (508,83) .. controls (477.07,83) and (452,75.39) .. (452,66) -- cycle ;
\draw    (452,66) -- (449,227) ;
\draw    (564,66) -- (561,227) ;
\draw  [draw opacity=0][line width=0.75]  (561,227) .. controls (561,236.39) and (535.93,244) .. (505,244) .. controls (474.07,244) and (449,236.39) .. (449,227) -- (505,227) -- cycle ; \draw  [line width=0.75]  (561,227) .. controls (561,236.39) and (535.93,244) .. (505,244) .. controls (474.07,244) and (449,236.39) .. (449,227) ;  
\draw  [draw opacity=0][line width=0.75]  (561.5,171) .. controls (561.5,180.39) and (536.54,188) .. (505.75,188) .. controls (474.96,188) and (450,180.39) .. (450,171) -- (505.75,171) -- cycle ; \draw  [line width=0.75]  (561.5,171) .. controls (561.5,180.39) and (536.54,188) .. (505.75,188) .. controls (474.96,188) and (450,180.39) .. (450,171) ;  
\draw  [draw opacity=0][line width=0.75]  (562.5,117) .. controls (562.5,126.39) and (537.54,134) .. (506.75,134) .. controls (475.96,134) and (451,126.39) .. (451,117) -- (506.75,117) -- cycle ; \draw  [line width=0.75]  (562.5,117) .. controls (562.5,126.39) and (537.54,134) .. (506.75,134) .. controls (475.96,134) and (451,126.39) .. (451,117) ;  
\draw [line width=2.25]    (451,142) -- (445.9,142) -- (358,264) -- (563.1,264) -- (651,142) -- (563,141) ;
\draw  [draw opacity=0][line width=0.75]  (260.24,120.94) .. controls (255.18,130.59) and (225.66,138) .. (190,138) .. controls (153.98,138) and (124.23,130.45) .. (119.62,120.65) -- (190,118) -- cycle ; \draw  [line width=0.75]  (260.24,120.94) .. controls (255.18,130.59) and (225.66,138) .. (190,138) .. controls (153.98,138) and (124.23,130.45) .. (119.62,120.65) ;  
\draw  [draw opacity=0][line width=0.75]  (233.05,173.11) .. controls (228.85,179.9) and (211.17,185) .. (190,185) .. controls (168.59,185) and (150.74,179.78) .. (146.81,172.88) -- (190,170) -- cycle ; \draw  [line width=0.75]  (233.05,173.11) .. controls (228.85,179.9) and (211.17,185) .. (190,185) .. controls (168.59,185) and (150.74,179.78) .. (146.81,172.88) ;  
\draw  [draw opacity=0][line width=0.75]  (213.32,210.92) .. controls (209.75,213.89) and (200.88,216) .. (190.5,216) .. controls (179.02,216) and (169.38,213.42) .. (166.73,209.94) -- (190.5,208) -- cycle ; \draw  [line width=0.75]  (213.32,210.92) .. controls (209.75,213.89) and (200.88,216) .. (190.5,216) .. controls (179.02,216) and (169.38,213.42) .. (166.73,209.94) ;  

\draw (69,54) node [anchor=north west][inner sep=0.75pt]   [align=left] {$\displaystyle \gamma $};
\draw (430,54) node [anchor=north west][inner sep=0.75pt]   [align=left] {$\displaystyle \gamma $};
\draw (165,247) node [anchor=north west][inner sep=0.75pt]   [align=left] {$\displaystyle p$};
\draw (593,155) node [anchor=north west][inner sep=0.75pt]   [align=left] {$\displaystyle L$};

\end{tikzpicture}
\caption{The geodesic cone over a curve centered at a point, as in Example~\ref{nonpositive_filling_function_eg}, is depicted on the left. The geodesic cylinder over a curve based on a totally geodesic submanifold, as in Example~\ref{totally_geodesic_eg}, is depicted on the right.}
\end{figure}
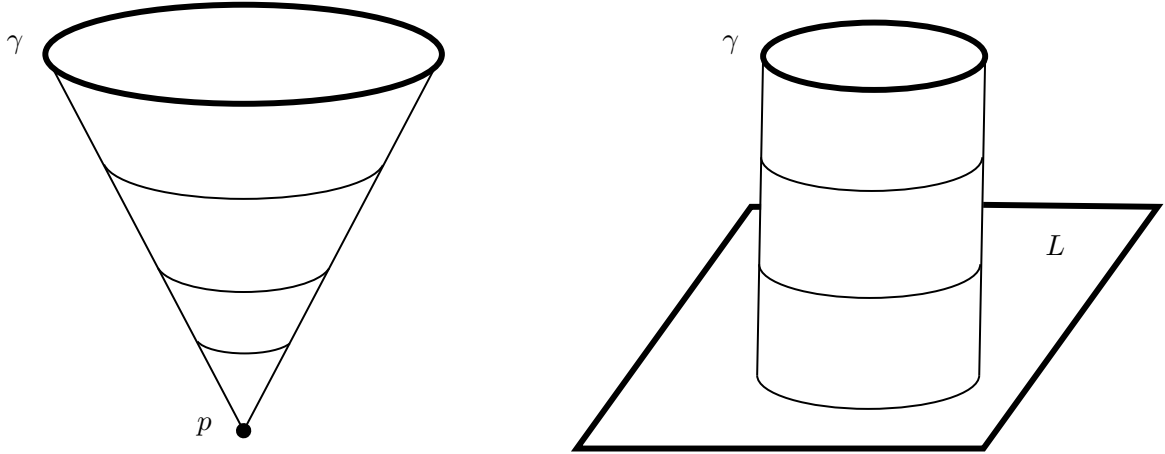

\begin{eg}\label{nonpositive_filling_function_eg}
Let $(X,g)$ be a simply connected Riemannian manifold with nonpositive curvature. Let $\gamma$ be a $1$-rectifiable cycle in $X$. Without loss of generality, assume $\gamma$ is connected. Then, it lies in a ball of diameter $|\gamma|$ around some point $p \in X$. Let $C$ be the geodesic cone over $\gamma$ centered at $p$. By \cite[Chapter~X, Theorems 2.4 and 2.5]{Lan99}, $C$ has area at most that of the corresponding Euclidean cone, which is bounded above by $|\gamma|^2$. Therefore, the filling function of $X$ is dominated by $r^2$.
\end{eg}

The following result shows that the equivalence class of the optimal filling function of the universal cover is, in fact, an invariant of the fundamental group. This equivalence class is called the \textit{homological Dehn function}.

\begin{lmm}[{\cite{brady2020homological}, cf.~\cite{burillo2001equivalence}}]\label{quasiisometry_invariance_lmm}
Let $(X,g_X)$ and $(Y,g_Y)$ be closed Riemannian manifolds. If their fundamental groups are isomorphic, the optimal filling functions for their universal covers are equivalent.
\end{lmm}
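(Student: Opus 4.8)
The plan is to reduce the statement to the quasi-isometry invariance of a combinatorial filling function, and then establish that combinatorial fact. First, since $(X,g_X)$ is a closed Riemannian manifold, the \v{S}varc--Milnor lemma shows that the universal cover $\wt{X}$ with the lifted metric is quasi-isometric to $\pi_1(X)$ equipped with any word metric; likewise $\wt{Y}$ is quasi-isometric to $\pi_1(Y)$. An abstract isomorphism $\pi_1(X)\cong\pi_1(Y)$ is a quasi-isometry of the two groups, so $\wt{X}$ and $\wt{Y}$ are quasi-isometric. It therefore suffices to prove that the equivalence class of the optimal filling function (Definition~\ref{filling_function}) of the universal cover of a closed Riemannian manifold depends only on the quasi-isometry type of that cover.

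Second, I would replace the current-theoretic filling function by a combinatorial model. Fix a smooth triangulation of $X$ and lift it to a $\pi_1(X)$-invariant triangulation $\mathcal{T}_X$ of $\wt{X}$; cocompactness makes $\mathcal{T}_X$ of bounded geometry, so on each skeleton the simplicial volume and the Riemannian volume are comparable up to a uniform constant. Using the Federer--Fleming deformation theorem one can push any $1$-rectifiable cycle $\gamma$ into the $1$-skeleton $\mathcal{T}_X^{(1)}$ with a bounded multiplicative increase in mass, producing along the way a $2$-chain of mass $O(|\gamma|)$ interpolating between $\gamma$ and its pushforward; conversely, a simplicial $1$-cycle is a rectifiable cycle of comparable mass. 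The same deformation in dimension two lets one replace a rectifiable $2$-current filling $\gamma$ by a simplicial $2$-chain of comparable mass. Hence the optimal filling function of $\wt{X}$ is equivalent to the \emph{homological Dehn function of $\mathcal{T}_X$}: the least function bounding, for each $n$, the minimal simplicial area needed to fill a simplicial $1$-cycle of combinatorial length at most $n$.

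Third, I would show this combinatorial filling function is a quasi-isometry invariant. Given a quasi-isometry $f:\wt{X}\to\wt{Y}$, build an induced chain map on simplicial chains: send each vertex to a vertex within bounded distance of its image, each edge to an edge-path of uniformly bounded length joining the images of its endpoints, and each $2$-simplex $\sigma$ to a simplicial $2$-chain filling the bounded-length $1$-cycle $f_\#(\partial\sigma)$ with uniformly bounded area. Such a bounded-area filling exists because $\wt{Y}$ is simply connected and $\mathcal{T}_Y$ is cocompact of bounded geometry, so only finitely many shapes of short cycles occur up to the deck action, and one may take the worst-case filling area among them. The map $f_\#$ is then a chain map distorting $1$-cycle length and $2$-chain area by at most bounded multiplicative factors, so if a $1$-cycle $z$ in $\mathcal{T}_X$ bounds a $2$-chain $w$, then $f_\# z=\partial f_\# w$ bounds with area $O(|w|)$. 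Performing the same construction for a coarse inverse $\bar f$ of $f$, and combining it with a chain homotopy between $\bar f_\# f_\#$ and the identity --- which, by the same cocompactness, can be chosen with uniformly bounded area per cell --- yields domination in the opposite direction. Together with the second step, this gives the equivalence of the optimal filling functions of $\wt{X}$ and $\wt{Y}$.

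The main obstacle is the second step: transporting the definition of the filling function --- phrased in terms of integral rectifiable currents, the mass norm, and an $\varepsilon$ of slack --- into the combinatorial world so that all multiplicative constants depend only on the bounded geometry of the compact manifold. This is precisely where the Federer--Fleming deformation theorem does the work, and carefully tracking how mass behaves under the deformation and under subdivision is the technical heart of the argument. By comparison, the chain-map construction in the third step is routine once one observes that short $1$-cycles in a cocompact bounded-geometry complex fill with uniformly bounded area.
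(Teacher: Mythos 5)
Your proposal is correct and follows essentially the same route the paper intends: the paper does not reprove this lemma but cites the homological Dehn function literature (where quasi-isometry invariance is established by exactly the \v{S}varc--Milnor reduction and bounded-geometry chain-map argument you sketch) and, in the remark immediately after the lemma, invokes the Federer--Fleming deformation theorem to pass between rectifiable currents and Lipschitz/simplicial chains, which is your second step. So your outline is a faithful reconstruction of the cited argument, with no substantive gap.
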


\begin{rmk}
In \cite{brady2020homological}, this result is stated and proved for fillings of Lipschitz chains, instead of rectifiable currents. By the Federer-Fleming deformation theorem \cite[Theorem~5.5]{Fed60}, either class of chains can be deformed into the other. This does not change the equivalence class of the function, so the difference is inconsequential.
\end{rmk}

\begin{crl}\label{free_abelian_crl}
Let $(X,g)$ be a closed Riemannian manifold with $\pi_1(X)$ free abelian. Then, the optimal filling function of the universal cover of $X$ is dominated by $r^2$.
\end{crl}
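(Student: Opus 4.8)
The plan is to reduce immediately to the flat torus via the quasi-isometry invariance of the homological Dehn function. Since $X$ is closed, $\pi_1(X)$ is finitely generated, so the hypothesis forces $\pi_1(X) \cong \Z^k$ for some $k \geq 0$. The case $k = 0$ is trivial, since then the universal cover is compact and its optimal filling function is bounded, hence certainly dominated by $x^2$; so assume $k \geq 1$.

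Next I would introduce the flat torus $T^k = \R^k/\Z^k$ as an auxiliary model space. It is a closed Riemannian manifold with $\pi_1(T^k) \cong \Z^k \cong \pi_1(X)$, and its universal cover is $\R^k$ equipped with the flat metric, which is simply connected with identically zero (in particular nonpositive) curvature. By Example~\ref{nonpositive_filling_function_eg}, the filling function of $\R^k$ is dominated by $r^2$; since the optimal filling function is pointwise no larger than any filling function, it too is dominated by $r^2$. Finally, applying Lemma~\ref{quasiisometry_invariance_lmm} to $X$ and $T^k$, whose fundamental groups are isomorphic, shows that the optimal filling function of the universal cover of $X$ is equivalent to that of $\R^k$, hence dominated by $r^2 = x^2$, using transitivity of the domination relation.

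The argument is essentially immediate given the cited results, so I do not expect a genuine obstacle. The only points needing (routine) care are that the domination relation is transitive and is inherited by the pointwise-smaller optimal filling function, and the bookkeeping that ``$\pi_1(X)$ free abelian'' together with compactness of $X$ really does mean $\pi_1(X) \cong \Z^k$ with $k$ finite. If one preferred to avoid invoking Lemma~\ref{quasiisometry_invariance_lmm}, an alternative would be to use the Milnor--\v{S}varc lemma to build a quasi-isometry between $\widetilde{X}$ and $\R^k$ directly and push forward the Euclidean cone fillings, but the stated lemma packages exactly this and keeps the proof shortest.
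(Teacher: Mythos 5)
Your proposal is correct and follows essentially the same route as the paper's proof: the paper also replaces $X$ by a torus with the same (free abelian) fundamental group, invokes the quadratic filling function of a nonpositively curved (flat) universal cover from Example~\ref{nonpositive_filling_function_eg}, and concludes via Lemma~\ref{quasiisometry_invariance_lmm}. Your extra bookkeeping (the $k=0$ case, transitivity of domination) is fine but not a departure from the paper's argument.
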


\begin{proof}
The fundamental group of $X$ is isomorphic to that of an $n$-torus. As tori have metrics of nonpositive curvature, the claim follows from Example~\ref{nonpositive_filling_function_eg}.
\end{proof}

\begin{lmm}\label{product_with_space_lmm}
Let $(X,g)$ be a Riemannian manifold and $\varphi$ be a filling function for $X$. Then, $\varphi(r) + r^2$ is a filling function for $X \times \R^n$.
\end{lmm}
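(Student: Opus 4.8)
The plan is to fill a $1$-cycle $\gamma$ in $X\times\R^n$ in two pieces: a ``prism'' current that slides $\gamma$ onto a horizontal slice $X\times\{v_0\}$, plus a filling of the image of $\gamma$ inside that slice, the latter being exactly what the hypothesis on $X$ provides. Write $\pi\colon X\times\R^n\to X$ and $\rho\colon X\times\R^n\to\R^n$ for the projections, both $1$-Lipschitz, and set $L:=|\gamma|$. Following the reduction used in Example~\ref{nonpositive_filling_function_eg}, I would first assume $\gamma$ is connected, say $\gamma=\theta_\#[S^1]$ for a Lipschitz loop $\theta\colon\R/L\Z\to X\times\R^n$ with $|\theta'|\le1$; a general integral $1$-cycle is a sum of such pieces. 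Then $\spt\gamma$ has diameter at most $L/2$, so putting $v_0:=\rho(\theta(0))$ one has $|\rho(p)-v_0|\le L/2$ for every $p\in\spt\gamma$. The choice of this slice is the crux of the argument: it exploits the translation invariance of the $\R^n$-factor to prevent the prism from being costly just because $\gamma$ happens to lie far out in the $\R^n$-direction.

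Next I would apply the affine homotopy $F\colon(X\times\R^n)\times[0,1]\to X\times\R^n$, $F((x,w),t):=(x,\,v_0+t(w-v_0))$, which satisfies $F_1=\id$ and $F_0=\iota_{v_0}\circ\pi$, where $\iota_{v_0}\colon X\to X\times\R^n$, $\iota_{v_0}(x)=(x,v_0)$, is an isometric embedding. Let $P$ be the prism current swept out by $F$ on $\gamma$; it is $2$-rectifiable, and since $\partial\gamma=0$ the homotopy formula gives $\partial P=\gamma-(F_0)_\#\gamma=\gamma-\iota_{v_0\#}(\pi_\#\gamma)$. For the mass: at a point of $\spt\gamma$ with unit tangent $(\tau_X,\tau_w)$, the second Jacobian of $F$ at time $t$ along the $2$-plane spanned by this tangent and $\partial_t$ equals $\bigl|(\tau_X,t\tau_w)\wedge(0,w-v_0)\bigr|\le|w-v_0|\le L/2$; hence $|P|\le\int_0^1\!\!\int_{\spt\gamma}|w-v_0|\,d\|\gamma\|\,dt\le\tfrac L2\,|\gamma|=\tfrac{L^2}2$.

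Finally, $\pi_\#\gamma$ is a $1$-cycle in $X$ of mass $\le L$, so for any $\ve>0$ the hypothesis gives a $2$-rectifiable current $\bar C$ in $X$ with $\partial\bar C=\pi_\#\gamma$ and $|\bar C|<\vp(|\pi_\#\gamma|)+\ve\le\vp(L)+\ve$ (taking $\vp$ non-decreasing). Then $C:=P+\iota_{v_0\#}\bar C$ is $2$-rectifiable with $\partial C=\gamma$ and $|C|\le|P|+|\bar C|<\tfrac{L^2}2+\vp(L)+\ve<\vp(L)+L^2+\ve$, which is what Definition~\ref{filling_function} requires. For a disconnected $\gamma$ one applies the above to each connected component and adds the resulting fillings; up to the equivalence of filling functions used throughout this is harmless. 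The one genuine obstacle is the uniform mass bound for $P$ — a priori $P$ could be large because $\gamma$ lies far from any fixed slice — and it is overcome precisely by the centering choice $v_0=\rho(\theta(0))$ (which is also the reason one reduces to connected $\gamma$ first); the remaining ingredients, namely the prism/homotopy formula for rectifiable currents and the elementary bound on the second Jacobian of $F$, are routine.
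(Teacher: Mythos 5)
Your proof is correct and follows essentially the same route as the paper: reduce to a connected cycle, note its $\R^n$-projection lies in a ball of diameter $|\gamma|$, contract that factor to a point (your prism current is exactly the paper's radial contraction of $X\times B$ to $X\times\{p\}$, with the same $O(|\gamma|^2)$ mass bound), and then fill the resulting cycle in the slice using the filling function of $X$. Your explicit monotonicity caveat for $\varphi$ and the component-wise treatment of disconnected cycles match what the paper leaves implicit, so there is nothing to add.
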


\begin{proof}
Let $\gamma$ be a $1$-rectifiable cycle in $X \times \R^n$. Without loss of generality, we may assume $\gamma$ is connected. Then, the projection of $\gamma$ to $\R^n$ lies in a ball $B$ of diameter $|\gamma|$ with center $p$. Radially contracting $X \times B$ to $X \times p$ yields a $1$-rectifiable cycle $\gamma'$ in $X \times p$ and a $2$-rectifiable chain $C$ in $X \times B$ such that $|\gamma'| \leq |\gamma|$, $\partial C = \gamma - \gamma'$, and $|C| \leq |\gamma|^2$. By assumption, for any $\varepsilon > 0$, there exists a $2$-rectifiable $C'$ such that $\partial C' = \gamma'$ and $|C'| \leq \varphi(|\gamma'|) + \varepsilon$; the claim follows.
\end{proof}

\begin{lmm}[{cf.\cite{brady2020homological}}]\label{superadditivity_lmm}
Let $(X,g)$ be a closed Riemannian manifold and $(\wt{X},\wt{g})$ be its universal cover. If $\pi_1(X)$ is infinite, then the optimal filling function $\varphi$ for $\wt{X}$ is superadditive.
\end{lmm}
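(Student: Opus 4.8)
The plan is to prove the equivalent inequality $\varphi(a)+\varphi(b)\le\varphi(a+b)$ for all $a,b\ge 0$. I take $\varphi$ to be non-decreasing (replacing it by its non-decreasing envelope if necessary; this is harmless and standard), and I use only two elementary properties of the optimal filling function: that $\textnormal{FA}(\gamma)\le\varphi(|\gamma|)$ for every $1$-rectifiable cycle $\gamma$ — where $\textnormal{FA}(\gamma)$ denotes the infimal mass of a $2$-rectifiable current with boundary $\gamma$ — and that for each $\ell\ge 0$ and $\epsilon>0$ there is a cycle $\gamma$ with $|\gamma|\le\ell$ and $\textnormal{FA}(\gamma)>\varphi(\ell)-\epsilon$. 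I will also use two facts about $\wt X$: that, being the universal cover of a closed manifold, it has bounded geometry and hence a small-scale linear isoperimetric inequality (constants $\delta_0,C_0>0$ with $\textnormal{FA}(\beta)\le C_0|\beta|^2$ whenever $|\beta|\le\delta_0$; decompose $\beta$ into loops each lying in a convex geodesic ball); and that $\pi_1(X)$ infinite acts on $\wt X$ by isometries with unbounded orbits, so any compact set can be moved arbitrarily far from any other. We may assume $\varphi(a),\varphi(b)<\infty$, else the claim follows from monotonicity.

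The first step is to produce, for each $\epsilon>0$, two well-separated test cycles, reducing the lemma to a quasi-additivity estimate. I would pick cycles $\gamma_1,\gamma_2$ with $|\gamma_i|\le a,b$ and $\textnormal{FA}(\gamma_1)>\varphi(a)-\epsilon$, $\textnormal{FA}(\gamma_2)>\varphi(b)-\epsilon$; writing a finite-mass $1$-cycle as a mass-summable sum of indecomposable loops and discarding a tail of small total mass (whose filling area is small by the isoperimetric inequality above), I may assume $\gamma_1,\gamma_2$ have compact support. Then, for a value $\rho$ to be fixed later, I would choose a deck transformation $\tau$ with $\textnormal{dist}\big(\spt\gamma_1,\spt(\tau\gamma_2)\big)>\rho$; as $\tau$ is an isometry, $|\tau\gamma_2|=|\gamma_2|$ and $\textnormal{FA}(\tau\gamma_2)=\textnormal{FA}(\gamma_2)$. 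The cycle $\gamma:=\gamma_1+\tau\gamma_2$ has support the disjoint union of the two supports, so $|\gamma|\le a+b$ and $\textnormal{FA}(\gamma)\le\varphi(a+b)$. The lemma thus reduces to the estimate
\[
\textnormal{FA}(\gamma_1+\tau\gamma_2)\ \ge\ \textnormal{FA}(\gamma_1)+\textnormal{FA}(\tau\gamma_2)-\kappa(\rho),
\]
with $\kappa(\rho)\to 0$ as $\rho\to\infty$ at a rate depending only on $\varphi(a)+\varphi(b)$: granting it, $\varphi(a+b)\ge\varphi(a)+\varphi(b)-2\epsilon-\kappa(\rho)$, and letting $\rho\to\infty$ then $\epsilon\to0$ finishes the proof.

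To prove the estimate, I would take an arbitrary $2$-current $C$ with $\partial C=\gamma_1+\tau\gamma_2$; we may assume $|C|<\textnormal{FA}(\gamma_1)+\textnormal{FA}(\tau\gamma_2)=:M_0$ (otherwise $C$ already obeys the bound), and note $M_0\le\varphi(a)+\varphi(b)$. Applying the slicing lemma (Lemma~\ref{slicing_lmm}) to $C$ with the $1$-Lipschitz function $f:=d_{\spt\gamma_1}$, the coarea inequality of \eqref{slicing_formulas} gives $\int_0^{\rho}|C_r(f)|\,dr\le|C|<M_0$; averaging over the middle third of $(0,\rho)$ yields a slicing level $r\in(0,\rho)$ with $|C_r(f)|\le 3M_0/\rho=:\delta$. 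Since $0<r<\rho$, we have $\spt\gamma_1\subset f^{-1}(-\infty,r)$ and $\spt(\tau\gamma_2)\subset f^{-1}([r,\infty))$, so the other formula of \eqref{slicing_formulas} gives $\partial\big(C\res f^{-1}(-\infty,r)\big)=\gamma_1+C_r(f)$ and $\partial\big(C\res f^{-1}([r,\infty))\big)=\tau\gamma_2-C_r(f)$. Now I cap off the small cycle $C_r(f)$: once $\rho$ is large enough that $\delta\le\delta_0$, there is a $2$-current $D$ with $\partial D=C_r(f)$ and $|D|\le C_0\delta^2+\epsilon$. Then $C\res f^{-1}(-\infty,r)-D$ bounds $\gamma_1$ and $C\res f^{-1}([r,\infty))+D$ bounds $\tau\gamma_2$, so these two restrictions of $C$ have mass at least $\textnormal{FA}(\gamma_1)-C_0\delta^2-\epsilon$ and $\textnormal{FA}(\tau\gamma_2)-C_0\delta^2-\epsilon$; adding them and using additivity of mass over the Borel partition $\wt X=f^{-1}(-\infty,r)\sqcup f^{-1}([r,\infty))$ gives $|C|\ge M_0-2C_0\delta^2-2\epsilon$. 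Since $\delta=3M_0/\rho\le 3(\varphi(a)+\varphi(b))/\rho$, this is the claimed estimate with $\kappa(\rho)=2C_0\big(3(\varphi(a)+\varphi(b))/\rho\big)^2+2\epsilon$.

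The step I expect to be the real obstacle is this quasi-additivity estimate — that is, ruling out that a $2$-current bounding $\gamma_1+\tau\gamma_2$ is more efficient than the two separate fillings by routing area through the region between the loops. The slicing lemma is precisely what defeats this: one cuts $C$ along a distance sphere about $\spt\gamma_1$ on which the slice $C_r(f)$ carries little mass — which the coarea inequality forces once the loops lie at distance $\gg|C|$ — and then closes that slice at cost $O(\delta^2)=o(1)$ by small-scale isoperimetry. The remaining ingredients (deck transformations with unbounded orbits, the reduction to compactly supported $\gamma_i$, additivity of mass under restriction, and the elementary properties of the optimal filling function) are routine, and I would dispatch them briefly.
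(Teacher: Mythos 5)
Your proposal is correct and follows essentially the same route as the paper: choose near-extremal test cycles, push one far away by a deck transformation (using that $\pi_1(X)$ is infinite), slice a near-optimal filling of the sum along the distance function to the first cycle via Lemma~\ref{slicing_lmm} to find a slice of small mass, cap it off at small cost using the small-scale filling bound coming from bounded geometry, and split the filling into two competitors. The only (cosmetic) deviation is the reduction to the non-decreasing envelope of $\varphi$, which you can avoid entirely by taking the test cycles to have length exactly $a$ and $b$, as the paper does, so that no monotonicity of the optimal filling function is needed.
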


\begin{proof}
As $(\wt{X},\wt{g})$ covers a closed manifold, it has nonzero injectivity radius. Therefore, each connected 1-rectifiable cycle with sufficiently small length lies in a normal exponential neighborhood and can be filled by taking a cone. Applying this construction to each connected component of a 1-rectifiable cycle shows that $\lim_{r \rightarrow 0} \varphi(r) = 0$.\\

Let $\varepsilon > 0$. Take 1-rectifiable cycles $\gamma_1$ and $\gamma_2$ such that any rectifiable 2-current $C_i$ with boundary $\gamma_i$ satisfies $|C_i| \geq \varphi(|\gamma_i|) - \varepsilon$ for $i = 1,2$. Take $\delta > 0$ such that $\varphi(\delta) < \varepsilon$. By translating $\gamma_1$ by a deck transformation, we may assume $\gamma_1$ and $\gamma_2$ are a distance of at least $2(\varphi(|\gamma_1| + |\gamma_2|) + \varepsilon)/\delta$ apart. Let $C$ be a 2-rectifiable chain such that
$$\partial C = \gamma_1 + \gamma_2 \quad \textnormal{and} \quad |C| < \varphi \big(|\gamma_1| + |\gamma_2| \big) + \varepsilon.$$
Let $d_{\gamma_1}$ be the distance function from the support of $\gamma_1$. It is clearly 1-Lipschitz. Lemma~\ref{slicing_lmm} applied to $d_{\gamma_1}$ shows that there is an $r \in \R$ such that $C_r(d_{\gamma_1})$ is a 1-rectifiable cycle with length $|C_r(d_{\gamma_1})| < \delta$. It can therefore be filled in by a 2-rectifiable cycle $C'$ with area $|C'| < 2\varepsilon$. Let
$$C_1 = C \res d_{\gamma_1}^{-1}\big([0,r)\big) - C' \quad \textnormal{and} \quad C_2 = C \res d_{\gamma_1}^{-1}\big([r,\infty)\big) + C'.$$
It follows from (\ref{slicing_formulas}) that $\partial C_1 = \gamma_1$, $\partial C_2 = \gamma_2$, and
$$\varphi(|\gamma_1|) + \varphi(|\gamma_2|) - 2\varepsilon < |C_1| + |C_2| < \varphi(|\gamma_1| + |\gamma_2|) + 5\varepsilon.$$
As $\varepsilon$ is arbitrary, we are done.
\end{proof}

\usetikzlibrary{patterns}

 
\tikzset{
pattern size/.store in=\mcSize, 
pattern size = 5pt,
pattern thickness/.store in=\mcThickness, 
pattern thickness = 0.3pt,
pattern radius/.store in=\mcRadius, 
pattern radius = 1pt}
\makeatletter
\pgfutil@ifundefined{pgf@pattern@name@_g8zoek1yn}{
\pgfdeclarepatternformonly[\mcThickness,\mcSize]{_g8zoek1yn}
{\pgfqpoint{0pt}{0pt}}
{\pgfpoint{\mcSize+\mcThickness}{\mcSize+\mcThickness}}
{\pgfpoint{\mcSize}{\mcSize}}
{
\pgfsetcolor{\tikz@pattern@color}
\pgfsetlinewidth{\mcThickness}
\pgfpathmoveto{\pgfqpoint{0pt}{0pt}}
\pgfpathlineto{\pgfpoint{\mcSize+\mcThickness}{\mcSize+\mcThickness}}
\pgfusepath{stroke}
}}
\makeatother
\tikzset{every picture/.style={line width=0.75pt}} 

\begin{figure}
\centering
\begin{tikzpicture}[x=0.75pt,y=0.75pt,yscale=-.75,xscale=.75]

\draw   (30,125) .. controls (30,83.58) and (38.51,50) .. (49,50) .. controls (59.49,50) and (68,83.58) .. (68,125) .. controls (68,166.42) and (59.49,200) .. (49,200) .. controls (38.51,200) and (30,166.42) .. (30,125) -- cycle ;
\draw    (49,50) .. controls (100,50) and (67,109.5) .. (130,110) .. controls (193,110.5) and (265,109.5) .. (325,110) ;
\draw  [pattern=_g8zoek1yn,pattern size=3.75pt,pattern thickness=0.75pt,pattern radius=0pt, pattern color={rgb, 255:red, 0; green, 0; blue, 0}] (320,125) .. controls (320,116.72) and (322.24,110) .. (325,110) .. controls (327.76,110) and (330,116.72) .. (330,125) .. controls (330,133.28) and (327.76,140) .. (325,140) .. controls (322.24,140) and (320,133.28) .. (320,125) -- cycle ;
\draw    (50,200) .. controls (101,200) and (68,140.5) .. (131,140) .. controls (194,139.5) and (266,140.5) .. (326,140) ;
\draw   (620,125) .. controls (620,83.58) and (611.49,50) .. (601,50) .. controls (590.51,50) and (582,83.58) .. (582,125) .. controls (582,166.42) and (590.51,200) .. (601,200) .. controls (611.49,200) and (620,166.42) .. (620,125) -- cycle ;
\draw    (601,50) .. controls (550,50) and (583,109.5) .. (520,110) .. controls (457,110.5) and (385,109.5) .. (325,110) ;
\draw    (600,200) .. controls (549,200) and (582,140.5) .. (519,140) .. controls (456,139.5) and (384,140.5) .. (324,140) ;
\draw    (357,100) -- (335.41,121.59) ;
\draw [shift={(334,123)}, rotate = 315] [color={rgb, 255:red, 0; green, 0; blue, 0 }  ][line width=0.75]    (10.93,-3.29) .. controls (6.95,-1.4) and (3.31,-0.3) .. (0,0) .. controls (3.31,0.3) and (6.95,1.4) .. (10.93,3.29)   ;

\draw (41,202) node [anchor=north west][inner sep=0.75pt]   [align=left] {$\displaystyle \gamma _{1}$};
\draw (590,202) node [anchor=north west][inner sep=0.75pt]   [align=left] {$\displaystyle \gamma _{2}$};
\draw (296,148) node [anchor=north west][inner sep=0.75pt]   [align=left] {$\displaystyle C_{r}( d_{\gamma _{1}})$};
\draw (351,80) node [anchor=north west][inner sep=0.75pt]   [align=left] {$\displaystyle C'$};

\end{tikzpicture}

\caption{The chain $C$ filling $\gamma_1 + \gamma_2$ in the proof of Lemma~\ref{superadditivity_lmm}}
\end{figure}
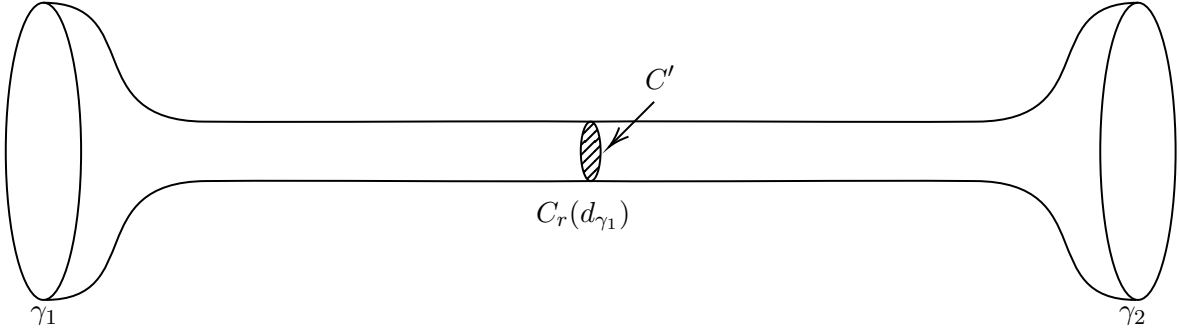

\begin{dfn}\label{relative_filling_function}
Let $(X,g)$ be a Riemannian manifold, $L \subset X$, and $R \in (0,\infty{]}$. A function $$\varphi_L:{[}0,\infty) \lra {[}0,\infty)$$ is a \textit{filling function for $(X,g)$ $R$-relative to $L$} if there is a retraction
\begin{equation}\label{retraction_eqn}
\eta: H_1\big(B_R(L);\Z\big) \lra H_1(L;\Z)
\end{equation}
and, for every $\varepsilon > 0$, $r \in (0,R)$, and rectifiable cycle $\gamma$ in $B_r(L)$, there exist a cycle $\gamma'$ in $L$ and a 2-rectifiable current $C$ in $X$ such that $[\gamma'] = \eta\big([\gamma]\big)$, $\partial C = \gamma - \gamma'$, and $|C| < \varphi_L(r)|\gamma| + \varepsilon$.
\end{dfn}

By $\eta$ in (\ref{retraction_eqn}) being a \textit{retraction}, we mean that $\eta$ is a homomorphism such that the composition
$$\eta \circ \iota_* : H_1(L;\Z) \lra H_1\big(B_R(L); \Z\big) \lra H_1(L;\Z)$$
of $\eta$ with the homomorphism $\iota_*$ induced by the inclusion $\iota:L \lra B_R(L)$ is the identity on~$H_1(L;\Z)$. We do not assume that $L$ is compact, that $\eta$ is induced by a continuous retraction $B_R(L) \lra L$, or even that such a retraction exists. Furthermore, $\eta$ may depend on $R$.

\begin{rmk}\label{asymptotics_rmk}
We are primarily interested in the case where a given function $\varphi$ is a filling function $R_i$-relative to $L_i$, for a sequence $R_i$ tending to infinity and a sequence of subspaces $L_i$. This is why we have taken the domain of an $R$-relative filling function to be ${[}0,\infty)$ instead of just ${[}0,R)$. It is also for this reason that we consider asymptotics of relative filling functions, despite the fact that the behavior of an $R$-relative filling function is meaningful only on ${[}0,R)$.
\end{rmk}

\begin{rmk}\label{homological_assumption_rel_filling_rmk}
It is important that the current $C$ is not required to lie in $B_r(L)$. Otherwise, it would be much more difficult to find relative filling functions. This is the reason for the homological assumptions in the definition.
\end{rmk}

\begin{rmk}
We note that Definition~\ref{filling_function} is not a special case of Definition~\ref{relative_filling_function}. The dependence on $|\gamma|$ is nonlinear in the former and linear in the latter. This is an important point in Propositions~\ref{monotonicity_prp} and \ref{lower_area_bound_prp}.
\end{rmk}

\begin{eg}\label{totally_geodesic_eg}
Let $(X,g)$ be a simply connected Riemannian manifold with nonpositive curvature and $L \subset X$ be a totally geodesic submanifold. Let $r > 0$ and $\gamma$ be a $1$-rectifiable cycle in $B_r(L)$. Let $C$ be the geodesic cylinder over $\gamma$ based on $L$. By \cite[Chapter~X, Theorems 2.4 and 2.5]{Lan99}, $C$ has area at most that of the corresponding Euclidean cylinder, which is bounded above by $r|\gamma|$. Therefore, the identity is a filling function $\infty$-relative to $L$.
\end{eg}

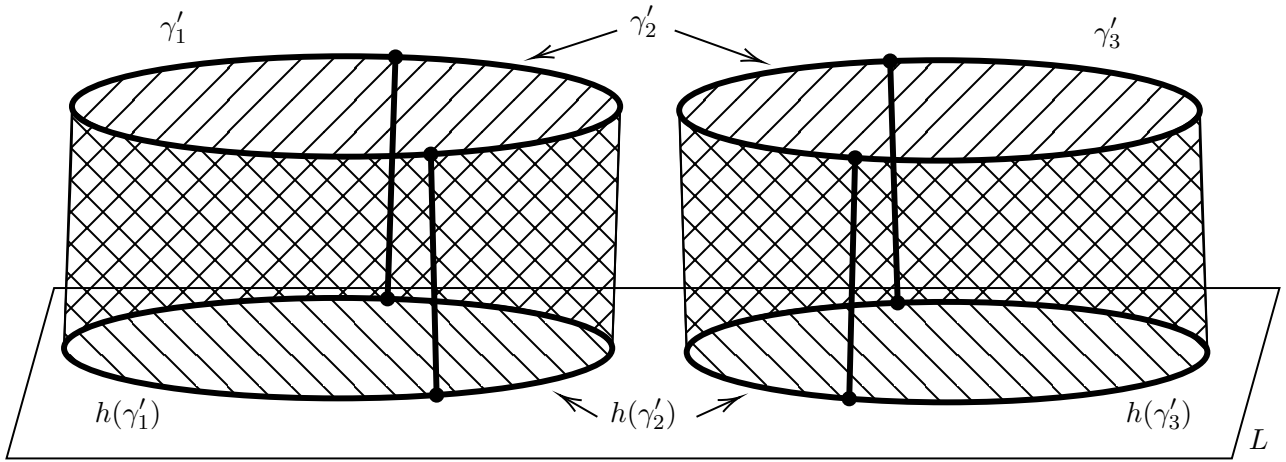
\begin{figure}
\centering

 
\tikzset{
pattern size/.store in=\mcSize, 
pattern size = 5pt,
pattern thickness/.store in=\mcThickness, 
pattern thickness = 0.3pt,
pattern radius/.store in=\mcRadius, 
pattern radius = 1pt}
\makeatletter
\pgfutil@ifundefined{pgf@pattern@name@_pl86b5ijy}{
\pgfdeclarepatternformonly[\mcThickness,\mcSize]{_pl86b5ijy}
{\pgfqpoint{0pt}{0pt}}
{\pgfpoint{\mcSize+\mcThickness}{\mcSize+\mcThickness}}
{\pgfpoint{\mcSize}{\mcSize}}
{
\pgfsetcolor{\tikz@pattern@color}
\pgfsetlinewidth{\mcThickness}
\pgfpathmoveto{\pgfqpoint{0pt}{0pt}}
\pgfpathlineto{\pgfpoint{\mcSize+\mcThickness}{\mcSize+\mcThickness}}
\pgfusepath{stroke}
}}
\makeatother

 
\tikzset{
pattern size/.store in=\mcSize, 
pattern size = 5pt,
pattern thickness/.store in=\mcThickness, 
pattern thickness = 0.3pt,
pattern radius/.store in=\mcRadius, 
pattern radius = 1pt}
\makeatletter
\pgfutil@ifundefined{pgf@pattern@name@_57qncubsb}{
\pgfdeclarepatternformonly[\mcThickness,\mcSize]{_57qncubsb}
{\pgfqpoint{0pt}{-\mcThickness}}
{\pgfpoint{\mcSize}{\mcSize}}
{\pgfpoint{\mcSize}{\mcSize}}
{
\pgfsetcolor{\tikz@pattern@color}
\pgfsetlinewidth{\mcThickness}
\pgfpathmoveto{\pgfqpoint{0pt}{\mcSize}}
\pgfpathlineto{\pgfpoint{\mcSize+\mcThickness}{-\mcThickness}}
\pgfusepath{stroke}
}}
\makeatother

 
\tikzset{
pattern size/.store in=\mcSize, 
pattern size = 5pt,
pattern thickness/.store in=\mcThickness, 
pattern thickness = 0.3pt,
pattern radius/.store in=\mcRadius, 
pattern radius = 1pt}
\makeatletter
\pgfutil@ifundefined{pgf@pattern@name@_ouze8wquz}{
\pgfdeclarepatternformonly[\mcThickness,\mcSize]{_ouze8wquz}
{\pgfqpoint{0pt}{0pt}}
{\pgfpoint{\mcSize}{\mcSize}}
{\pgfpoint{\mcSize}{\mcSize}}
{
\pgfsetcolor{\tikz@pattern@color}
\pgfsetlinewidth{\mcThickness}
\pgfpathmoveto{\pgfqpoint{0pt}{\mcSize}}
\pgfpathlineto{\pgfpoint{\mcSize+\mcThickness}{-\mcThickness}}
\pgfpathmoveto{\pgfqpoint{0pt}{0pt}}
\pgfpathlineto{\pgfpoint{\mcSize+\mcThickness}{\mcSize+\mcThickness}}
\pgfusepath{stroke}
}}
\makeatother

 
\tikzset{
pattern size/.store in=\mcSize, 
pattern size = 5pt,
pattern thickness/.store in=\mcThickness, 
pattern thickness = 0.3pt,
pattern radius/.store in=\mcRadius, 
pattern radius = 1pt}
\makeatletter
\pgfutil@ifundefined{pgf@pattern@name@_6fz184bt7}{
\pgfdeclarepatternformonly[\mcThickness,\mcSize]{_6fz184bt7}
{\pgfqpoint{0pt}{0pt}}
{\pgfpoint{\mcSize+\mcThickness}{\mcSize+\mcThickness}}
{\pgfpoint{\mcSize}{\mcSize}}
{
\pgfsetcolor{\tikz@pattern@color}
\pgfsetlinewidth{\mcThickness}
\pgfpathmoveto{\pgfqpoint{0pt}{0pt}}
\pgfpathlineto{\pgfpoint{\mcSize+\mcThickness}{\mcSize+\mcThickness}}
\pgfusepath{stroke}
}}
\makeatother

 
\tikzset{
pattern size/.store in=\mcSize, 
pattern size = 5pt,
pattern thickness/.store in=\mcThickness, 
pattern thickness = 0.3pt,
pattern radius/.store in=\mcRadius, 
pattern radius = 1pt}
\makeatletter
\pgfutil@ifundefined{pgf@pattern@name@_ix2v6aqg5}{
\pgfdeclarepatternformonly[\mcThickness,\mcSize]{_ix2v6aqg5}
{\pgfqpoint{0pt}{-\mcThickness}}
{\pgfpoint{\mcSize}{\mcSize}}
{\pgfpoint{\mcSize}{\mcSize}}
{
\pgfsetcolor{\tikz@pattern@color}
\pgfsetlinewidth{\mcThickness}
\pgfpathmoveto{\pgfqpoint{0pt}{\mcSize}}
\pgfpathlineto{\pgfpoint{\mcSize+\mcThickness}{-\mcThickness}}
\pgfusepath{stroke}
}}
\makeatother

 
\tikzset{
pattern size/.store in=\mcSize, 
pattern size = 5pt,
pattern thickness/.store in=\mcThickness, 
pattern thickness = 0.3pt,
pattern radius/.store in=\mcRadius, 
pattern radius = 1pt}
\makeatletter
\pgfutil@ifundefined{pgf@pattern@name@_n3zlzheya}{
\pgfdeclarepatternformonly[\mcThickness,\mcSize]{_n3zlzheya}
{\pgfqpoint{0pt}{0pt}}
{\pgfpoint{\mcSize}{\mcSize}}
{\pgfpoint{\mcSize}{\mcSize}}
{
\pgfsetcolor{\tikz@pattern@color}
\pgfsetlinewidth{\mcThickness}
\pgfpathmoveto{\pgfqpoint{0pt}{\mcSize}}
\pgfpathlineto{\pgfpoint{\mcSize+\mcThickness}{-\mcThickness}}
\pgfpathmoveto{\pgfqpoint{0pt}{0pt}}
\pgfpathlineto{\pgfpoint{\mcSize+\mcThickness}{\mcSize+\mcThickness}}
\pgfusepath{stroke}
}}
\makeatother
\tikzset{every picture/.style={line width=0.75pt}} 

\begin{tikzpicture}[x=0.75pt,y=0.75pt,yscale=-.75,xscale=.75]

\draw  [pattern=_pl86b5ijy,pattern size=12pt,pattern thickness=0.75pt,pattern radius=0pt, pattern color={rgb, 255:red, 0; green, 0; blue, 0}][line width=2.25]  (41,105.34) .. controls (41,91.35) and (102.9,80) .. (179.25,80) .. controls (255.6,80) and (317.5,91.35) .. (317.5,105.34) .. controls (317.5,119.34) and (255.6,130.69) .. (179.25,130.69) .. controls (102.9,130.69) and (41,119.34) .. (41,105.34) -- cycle ;
\draw  [draw opacity=0] (222.03,129.45) .. controls (208.56,130.26) and (194.18,130.69) .. (179.25,130.69) .. controls (102.9,130.69) and (41,119.34) .. (41,105.34) .. controls (41,91.35) and (102.9,80) .. (179.25,80) .. controls (187.76,80) and (196.1,80.14) .. (204.18,80.41) -- (179.25,105.34) -- cycle ; \draw    (222.03,129.45) .. controls (208.56,130.26) and (194.18,130.69) .. (179.25,130.69) .. controls (102.9,130.69) and (41,119.34) .. (41,105.34) .. controls (41,91.35) and (102.9,80) .. (179.25,80) .. controls (187.76,80) and (196.1,80.14) .. (204.18,80.41) ; \draw [shift={(204.18,80.41)}, rotate = 1.45] [color={rgb, 255:red, 0; green, 0; blue, 0 }  ][fill={rgb, 255:red, 0; green, 0; blue, 0 }  ][line width=0.75]      (0, 0) circle [x radius= 3.35, y radius= 3.35]   ; \draw [shift={(222.03,129.45)}, rotate = 177.12] [color={rgb, 255:red, 0; green, 0; blue, 0 }  ][fill={rgb, 255:red, 0; green, 0; blue, 0 }  ][line width=0.75]      (0, 0) circle [x radius= 3.35, y radius= 3.35]   ;
\draw    (316,67) -- (274.9,80.38) ;
\draw [shift={(273,81)}, rotate = 341.97] [color={rgb, 255:red, 0; green, 0; blue, 0 }  ][line width=0.75]    (10.93,-3.29) .. controls (6.95,-1.4) and (3.31,-0.3) .. (0,0) .. controls (3.31,0.3) and (6.95,1.4) .. (10.93,3.29)   ;
\draw  [pattern=_57qncubsb,pattern size=12pt,pattern thickness=0.75pt,pattern radius=0pt, pattern color={rgb, 255:red, 0; green, 0; blue, 0}][line width=2.25]  (37,227.34) .. controls (37,213.35) and (98.9,202) .. (175.25,202) .. controls (251.6,202) and (313.5,213.35) .. (313.5,227.34) .. controls (313.5,241.34) and (251.6,252.69) .. (175.25,252.69) .. controls (98.9,252.69) and (37,241.34) .. (37,227.34) -- cycle ;
\draw  [draw opacity=0] (224.92,251) .. controls (209.51,252.09) and (192.76,252.69) .. (175.25,252.69) .. controls (98.9,252.69) and (37,241.34) .. (37,227.34) .. controls (37,213.35) and (98.9,202) .. (175.25,202) .. controls (183.76,202) and (192.1,202.14) .. (200.18,202.41) -- (175.25,227.34) -- cycle ; \draw    (224.92,251) .. controls (209.51,252.09) and (192.76,252.69) .. (175.25,252.69) .. controls (98.9,252.69) and (37,241.34) .. (37,227.34) .. controls (37,213.35) and (98.9,202) .. (175.25,202) .. controls (183.76,202) and (192.1,202.14) .. (200.18,202.41) ; \draw [shift={(200.18,202.41)}, rotate = 1.45] [color={rgb, 255:red, 0; green, 0; blue, 0 }  ][fill={rgb, 255:red, 0; green, 0; blue, 0 }  ][line width=0.75]      (0, 0) circle [x radius= 3.35, y radius= 3.35]   ; \draw [shift={(224.92,251)}, rotate = 176.52] [color={rgb, 255:red, 0; green, 0; blue, 0 }  ][fill={rgb, 255:red, 0; green, 0; blue, 0 }  ][line width=0.75]      (0, 0) circle [x radius= 3.35, y radius= 3.35]   ;
\draw    (41,105.34) -- (37,227.34) ;
\draw    (317.5,105.34) -- (313.5,227.34) ;
\draw  [pattern=_ouze8wquz,pattern size=12pt,pattern thickness=0.75pt,pattern radius=0pt, pattern color={rgb, 255:red, 0; green, 0; blue, 0}] (41,105.34) .. controls (40,97) and (29,131) .. (179.25,130.69) .. controls (329.5,130.38) and (318,101) .. (317.5,105.34) .. controls (317,109.69) and (314,219.69) .. (313.5,227.34) .. controls (313,235) and (328,201) .. (175.25,202) .. controls (22.5,203) and (37,231.84) .. (37,227.34) .. controls (37,222.85) and (42,113.69) .. (41,105.34) -- cycle ;
\draw [line width=2.25]    (204.18,80.41) -- (200.18,202.41) ;
\draw [line width=2.25]    (222.03,129.45) -- (224.92,251) ;
\draw  [pattern=_6fz184bt7,pattern size=12pt,pattern thickness=0.75pt,pattern radius=0pt, pattern color={rgb, 255:red, 0; green, 0; blue, 0}][line width=2.25]  (609.85,107.34) .. controls (609.85,93.35) and (551.07,82) .. (478.55,82) .. controls (406.04,82) and (347.25,93.35) .. (347.25,107.34) .. controls (347.25,121.34) and (406.04,132.69) .. (478.55,132.69) .. controls (551.07,132.69) and (609.85,121.34) .. (609.85,107.34) -- cycle ;
\draw  [draw opacity=0] (435.99,131.33) .. controls (449.33,132.21) and (463.65,132.69) .. (478.55,132.69) .. controls (551.07,132.69) and (609.85,121.34) .. (609.85,107.34) .. controls (609.85,93.35) and (551.07,82) .. (478.55,82) .. controls (470.04,82) and (461.72,82.16) .. (453.66,82.45) -- (478.55,107.34) -- cycle ; \draw    (435.99,131.33) .. controls (449.33,132.21) and (463.65,132.69) .. (478.55,132.69) .. controls (551.07,132.69) and (609.85,121.34) .. (609.85,107.34) .. controls (609.85,93.35) and (551.07,82) .. (478.55,82) .. controls (470.04,82) and (461.72,82.16) .. (453.66,82.45) ; \draw [shift={(453.66,82.45)}, rotate = 178.39] [color={rgb, 255:red, 0; green, 0; blue, 0 }  ][fill={rgb, 255:red, 0; green, 0; blue, 0 }  ][line width=0.75]      (0, 0) circle [x radius= 3.35, y radius= 3.35]   ; \draw [shift={(435.99,131.33)}, rotate = 3.19] [color={rgb, 255:red, 0; green, 0; blue, 0 }  ][fill={rgb, 255:red, 0; green, 0; blue, 0 }  ][line width=0.75]      (0, 0) circle [x radius= 3.35, y radius= 3.35]   ;
\draw    (345,67) -- (387.63,82.32) ;
\draw [shift={(389.51,83)}, rotate = 199.77] [color={rgb, 255:red, 0; green, 0; blue, 0 }  ][line width=0.75]    (10.93,-3.29) .. controls (6.95,-1.4) and (3.31,-0.3) .. (0,0) .. controls (3.31,0.3) and (6.95,1.4) .. (10.93,3.29)   ;
\draw  [pattern=_ix2v6aqg5,pattern size=12pt,pattern thickness=0.75pt,pattern radius=0pt, pattern color={rgb, 255:red, 0; green, 0; blue, 0}][line width=2.25]  (613.65,229.34) .. controls (613.65,215.35) and (554.86,204) .. (482.35,204) .. controls (409.83,204) and (351.05,215.35) .. (351.05,229.34) .. controls (351.05,243.34) and (409.83,254.69) .. (482.35,254.69) .. controls (554.86,254.69) and (613.65,243.34) .. (613.65,229.34) -- cycle ;
\draw  [draw opacity=0] (433.02,252.84) .. controls (448.25,254.03) and (464.9,254.69) .. (482.35,254.69) .. controls (554.86,254.69) and (613.65,243.34) .. (613.65,229.34) .. controls (613.65,215.35) and (554.86,204) .. (482.35,204) .. controls (473.84,204) and (465.52,204.16) .. (457.46,204.45) -- (482.35,229.34) -- cycle ; \draw    (433.02,252.84) .. controls (448.25,254.03) and (464.9,254.69) .. (482.35,254.69) .. controls (554.86,254.69) and (613.65,243.34) .. (613.65,229.34) .. controls (613.65,215.35) and (554.86,204) .. (482.35,204) .. controls (473.84,204) and (465.52,204.16) .. (457.46,204.45) ; \draw [shift={(457.46,204.45)}, rotate = 178.39] [color={rgb, 255:red, 0; green, 0; blue, 0 }  ][fill={rgb, 255:red, 0; green, 0; blue, 0 }  ][line width=0.75]      (0, 0) circle [x radius= 3.35, y radius= 3.35]   ; \draw [shift={(433.02,252.84)}, rotate = 3.86] [color={rgb, 255:red, 0; green, 0; blue, 0 }  ][fill={rgb, 255:red, 0; green, 0; blue, 0 }  ][line width=0.75]      (0, 0) circle [x radius= 3.35, y radius= 3.35]   ;
\draw    (609.85,107.34) -- (613.65,229.34) ;
\draw    (347.25,107.34) -- (351.05,229.34) ;
\draw  [pattern=_n3zlzheya,pattern size=12pt,pattern thickness=0.75pt,pattern radius=0pt, pattern color={rgb, 255:red, 0; green, 0; blue, 0}] (609.85,107.34) .. controls (610.8,99) and (621.25,133) .. (478.55,132.69) .. controls (335.85,132.38) and (346.78,103) .. (347.25,107.34) .. controls (347.73,111.69) and (350.57,221.69) .. (351.05,229.34) .. controls (351.52,237) and (337.28,203) .. (482.35,204) .. controls (627.42,205) and (613.65,233.84) .. (613.65,229.34) .. controls (613.65,224.85) and (608.9,115.69) .. (609.85,107.34) -- cycle ;
\draw [line width=2.25]    (453.66,82.45) -- (457.46,204.45) ;
\draw [line width=2.25]    (435.99,131.33) -- (433.02,252.84) ;
\draw   (32,197) -- (650,197) -- (626,283) -- (8,283) -- cycle ;
\draw    (306,262) -- (288.73,252) ;
\draw [shift={(287,251)}, rotate = 30.07] [color={rgb, 255:red, 0; green, 0; blue, 0 }  ][line width=0.75]    (10.93,-3.29) .. controls (6.95,-1.4) and (3.31,-0.3) .. (0,0) .. controls (3.31,0.3) and (6.95,1.4) .. (10.93,3.29)   ;
\draw    (356,262) -- (376.18,252.83) ;
\draw [shift={(378,252)}, rotate = 155.56] [color={rgb, 255:red, 0; green, 0; blue, 0 }  ][line width=0.75]    (10.93,-3.29) .. controls (6.95,-1.4) and (3.31,-0.3) .. (0,0) .. controls (3.31,0.3) and (6.95,1.4) .. (10.93,3.29)   ;

\draw (84,56) node [anchor=north west][inner sep=0.75pt]   [align=left] {$\displaystyle \gamma '_{1}$};
\draw (321,53) node [anchor=north west][inner sep=0.75pt]   [align=left] {$\displaystyle \gamma '_{2}$};
\draw (555,59) node [anchor=north west][inner sep=0.75pt]   [align=left] {$\displaystyle \gamma '_{3}$};
\draw (51,251) node [anchor=north west][inner sep=0.75pt]   [align=left] {$\displaystyle h( \gamma '_{1})$};
\draw (311,251) node [anchor=north west][inner sep=0.75pt]   [align=left] {$\displaystyle h( \gamma '_{2})$};
\draw (571,251) node [anchor=north west][inner sep=0.75pt]   [align=left] {$\displaystyle h( \gamma '_{3})$};
\draw (633,267) node [anchor=north west][inner sep=0.75pt]   [align=left] {$\displaystyle L$};

\end{tikzpicture}
\caption{The shaded region is the chain $C$ constructed in the proof of Lemma~\ref{relative_from_filling_lmm}.}
\end{figure}

\begin{lmm}\label{relative_from_filling_lmm}
Let $(X,g)$ be a Riemannian manifold, $L \subset X$, and $\varphi$ be a superadditive filling function for $X$. If $K \geq 1$, $R>0$, and there is a $K$-Lipschitz retraction $h: (B_R(L),g) \lra (L,g)$, then $\varphi_L(r) := 2\varphi(\relbound Kr)/r$ is a filling function $R$-relative to $L$.
\end{lmm}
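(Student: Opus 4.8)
The plan is to take $\eta:=h_*$ on first homology and $\gamma':=h_{\#}\gamma$ (the pushforward of currents), and then to build the relative filling $C$ of $\gamma-\gamma'$ by chopping $\gamma$ into short pieces and filling, for each piece, a single thin loop using the ambient filling function $\varphi$ of $X$. Since $h\colon B_R(L)\to L$ is a continuous retraction, $h_*\circ\iota_*=\id$ on $H_1(L;\Z)$, so $\eta=h_*$ is a retraction in the sense of Definition~\ref{relative_filling_function}. For a rectifiable cycle $\gamma$ in $B_r(L)$ with $r<R$, the pushforward $\gamma'=h_{\#}\gamma$ is a rectifiable cycle supported in $L$ with $|\gamma'|\le K|\gamma|$ and $[\gamma']=h_*[\gamma]=\eta([\gamma])$, so the whole statement reduces to producing a $2$-current $C$ with $\partial C=\gamma-\gamma'$ and $|C|<\varphi_L(r)|\gamma|+\varepsilon$.

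For the filling I would first decompose $\gamma$ into closed Lipschitz loops with masses adding exactly, using the structure theory of integral $1$-currents. A loop $c$ of length $\ell\ge r$ is cut into $m:=\lceil\ell/r\rceil\le 2\ell/r$ consecutive arcs $a_1,\dots,a_m$ of length at most $r$, with $\partial a_i=p_i-p_{i-1}$ cyclically. Each breakpoint $p_i$ lies in $B_r(L)$, so there is $q_i\in L$ with $d(p_i,q_i)<r$; since $h$ is $K$-Lipschitz and fixes $q_i$, one gets $d\big(p_i,h(p_i)\big)<r+Kr\le 2Kr$, so $p_i$ and $h(p_i)$ may be joined by a path $\sigma_i$ of length $<2Kr$. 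The loop $\ell_i:=a_i+\sigma_i-h_{\#}a_i-\sigma_{i-1}$ then has length $<r+5Kr\le 6Kr$, so by the filling function of $X$ it bounds a $2$-current $C_i$ with $|C_i|<\varphi(6Kr)+\varepsilon_i$. Summing the $C_i$ over all arcs of all loops of length $\ge r$, the $\sigma_i$ telescope within each loop, so $C_{\mathrm{big}}:=\sum C_i$ satisfies $\partial C_{\mathrm{big}}=\gamma_{\mathrm{big}}-h_{\#}\gamma_{\mathrm{big}}$, where $\gamma_{\mathrm{big}}$ is the union of those loops; since the arcs number at most $2|\gamma_{\mathrm{big}}|/r$ in total and $\varphi$ is nondecreasing (a consequence of superadditivity and $\varphi\ge 0$), $|C_{\mathrm{big}}|< 2|\gamma_{\mathrm{big}}|r^{-1}\varphi(\relbound Kr)+\varepsilon/2$.

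For a loop $c$ of length $\ell<r$ I would instead fill the cycle $c-h_{\#}c$, of mass at most $(1+K)\ell\le 2K\ell$, directly by $\varphi$. Here superadditivity enters a second time: it implies $\varphi(x)/x\ge\tfrac12\varphi(y)/y$ whenever $y\le x$, hence $\varphi(2K\ell)\le \ell(5r)^{-1}\varphi(\relbound Kr)$, so that all the short loops together contribute at most $|\gamma_{\mathrm{small}}|(5r)^{-1}\varphi(\relbound Kr)+\varepsilon/2$ rather than a full $\varphi(\relbound Kr)$ apiece. Adding the two contributions produces $C$ with $\partial C=\gamma-\gamma'$ and, since $|\gamma_{\mathrm{big}}|+|\gamma_{\mathrm{small}}|=|\gamma|$, with $|C|<2|\gamma|r^{-1}\varphi(\relbound Kr)+\varepsilon=\varphi_L(r)|\gamma|+\varepsilon$, as required.

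The point I expect to require the most care is keeping the estimate \emph{linear} in $|\gamma|$: a single filling of $\gamma-\gamma'$ via $\varphi$ would only give the useless nonlinear bound $\varphi\big((1+K)|\gamma|\big)$, so one must chop $\gamma$ into $\sim|\gamma|/r$ controlled pieces and sum. Consequently the decomposition of an arbitrary rectifiable cycle into loops and short arcs should be carried out via the structure theory of $1$-currents rather than the Federer--Fleming deformation theorem, whose dimensional constant would corrupt the coefficient of $|\gamma|$; and one genuinely cannot spend a full $\varphi(\relbound Kr)$ on each short component, which is precisely where superadditivity of $\varphi$ --- not merely monotonicity --- is needed. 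The constants $2$ and $\relbound$ in the statement are met with room to spare and are not optimized.
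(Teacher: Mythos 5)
Your proof is correct and follows essentially the same route as the paper's: subdivide the cycle into pieces of length at most $r$, join the division points to their images under $h$ by paths of length $O(Kr)$, fill each resulting short loop at cost $\varphi(O(Kr))$, and use superadditivity of $\varphi$ on the pieces shorter than $r$ so that the total stays linear in $|\gamma|$. The only real difference is implementational: you invoke the exact decomposition of an integral $1$-cycle into Lipschitz loops and apply the superadditivity trick loop-by-loop (summing possibly countably many fillings), whereas the paper first $\varepsilon$-approximates $\gamma$ by a finite union of circles via Federer--Fleming and treats the case $|\gamma|\le r$ of the whole cycle separately.
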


\begin{proof}
Let $r \in (0,R)$, $\gamma$ be a 1-rectifiable cycle in $B_r(L)$, and $\varepsilon > 0$. Suppose $|\gamma| \leq r$. As $h$ is $K$-Lipschitz, $|h(\gamma)| \leq K|\gamma|$. Therefore, $\gamma - h(\gamma)$ bounds a chain $C$ with mass at most $\varphi(2K|\gamma|) + \varepsilon$.
By the superadditivity (and therefore monotonicity) of $\varphi$,
$$\varphi(\relbound Kr) \geq \varphi\big(\relbound K \lfloor r/|\gamma| \rfloor |\gamma|\big) \geq
 \lfloor r/|\gamma| \rfloor \varphi\big(\relbound K|\gamma|\big) \geq r\varphi(2K|\gamma|)/(2|\gamma|),$$
where $\lfloor r/|\gamma| \rfloor$ denotes the greatest integer less than $r/|\gamma|$. Thus, $|C| \leq \varphi_L(r)|\gamma| + \varepsilon$, as needed.\\

Now suppose $|\gamma|>r$ and $\varepsilon$ is smaller than $r$ and $R - r$. There is a 1-rectifiable cycle $\gamma'$ parameterized by a finite union of circles such that the support of $\gamma'$ is $\varepsilon$-close to that of $\gamma$, $|\gamma'| < |\gamma| + \varepsilon$, and there is a chain of area at most $\varepsilon$ filling $\gamma - \gamma'$. This follows quickly from the definition of rectifiable curve or more generally from \cite[Corollary~8.23]{Fed60}.\\

Let $\lceil |\gamma'|/ r \rceil$ denote the least integer greater than $|\gamma'|/ r$. Subdivide $\gamma'$ into $\lceil |\gamma'|/ r \rceil$ pieces $\gamma'_i$, each with $|\gamma'_i| \leq r$ and at most $4$ boundary points (counted with multiplicity); a naive, greedy subdivision works. Each boundary point $p$ is at a distance at most $r + \varepsilon$ from a point $p' \in L$. Therefore, $h(p)$ is at a distance at most $K(r + \varepsilon)$ from $p'$ and at most $(K+1)(r + \varepsilon)$ from $p$. For each $\gamma'_i$, connect each boundary point to its image under $h$ by a path of length at most $(K+1)(r + \varepsilon)$. These paths, $\gamma'_i$, and $h(\gamma'_i)$ together form a 1-rectifiable cycle. It has length at most $20Kr$ and therefore can be filled by a chain of area at most $\varphi(20Kr) + \varepsilon$. The sum of the chains corresponding to each $\gamma'_i$ yields a chain $C$ of area at most $(\varphi(20Kr) + \varepsilon)\lceil |\gamma'| / r \rceil$ filling $\gamma' - h(\gamma')$. Since
$$\lceil |\gamma'| / r \rceil \leq 2|\gamma'|/r \leq 2\big(|\gamma| + \varepsilon\big)/r,$$
we obtain that $|C|$ is at most $\varphi_L(r)|\gamma|$, up to a term vanishing with $\varepsilon$.
\end{proof}

\subsection{Large scale monotonicity}\label{Monotonicity_sec}
This section is concerned with monotonicity formulae for area density. It is heavily inspired by \cite[Lemma~4.2.2]{Mul94} and \cite[Proposition~4.3.1]{Sik94}. A filling inequality implies an isoperimetric inequality for pseudoholomorphic curves (and, more generally, quasiminimzing surfaces). Considering boundary length as the derivative of area, an isoperimetric inequality can be interpreted as a differential inequality for the area\footnote{G.~Antonelli has informed the author that this is sometimes referred to as ``De Giorgi's trick''.}. Applying Grönwall's inequality~\cite[Theorem~1.2.2]{Pac98} to this differential inequality yields a monotonicity formula for area. This is the content of Proposition~\ref{monotonicity_prp}, the main step in the proof of Theorem~\ref{existence_thm}. Alternatively, applying the Bihari-LaSalle inequality~\cite[Theorem~2.3.1]{Pac98} to the differential inequality yields a lower area bound. This is the content of Proposition~\ref{lower_area_bound_prp}, which is not used in the proof of Theorem~\ref{existence_thm}, but is included in this section because of its independent interest.\\

The absolute ($L = \emptyset$) version of the following notion is due to \cites{Ban96,Ban98}.
\begin{dfn}\label{Q-quasiminimizing}
Let $(X,g)$ be a Riemannian manifold, $L \subset X$, and $Q \geq 1$. An $n$-rectifiable current $C$ such that $\partial C$ is supported on $L$ is \textit{$Q$-quasiminimizing relative to $L$} if, for every Borel set $B \supset L$ and $n$-rectifiable current $C'$ in $X$ such that $\partial(C\res B - C')$ is supported and null-homologous within $L$, $|C\res B| \leq Q|C'|$. An $n$-rectifiable current is \textit{$Q$-quasiminimizing} if it is $Q$-quasiminimizing relative to $L = \emptyset$.
\end{dfn}

\begin{eg}[{cf. \cite{Ban98}}]\label{J-curves_are_quasiminimizing_eg}
Let $(X,\omega)$ be an exact symplectic manifold equipped with a Riemannian metric and $L\subset X$ be a Lagrangian. Suppose $J$ is an almost complex structure on $X$ for which there exists $K \geq 1$ such that, as forms,
$\omega|_\Pi \leq K \textnormal{area}_\Pi$ for every oriented tangent plane $\Pi$ and 
$\omega|_\Pi \geq K^{-1} \textnormal{area}_\Pi$ when $\Pi$ is a complex tangent plane. In particular, $J$ is tamed by $\omega$. Conversely, every tamed $J$ satisfies this condition for some $K \geq 1$ on each compact set. Let $C$ be a $J$-holomorphic curve with boundary on $L$. With $B$ and $C'$ as in Definition~\ref{Q-quasiminimizing},
$$|C\res B| \leq K (C \res B)(\omega) = K C'(\omega) \leq K^2 |C'|;$$
the equality above follows from $\omega$ being exact and $\partial(C\res B - C')$ being null-homologous in $L$.
Thus, $J$-holomorphic curves with boundary on $L$ are $K^2$-quasiminimizing relative to $L$. In particular, this holds for every almost complex structure $\wt{J}$ on the universal cover $\wt{X}$ of a closed, symplectically aspherical manifold $(X,\omega)$ which is lifted from an almost complex structure $J$ on $X$ tamed by $\omega$. We recall that a symplectic form $\omega$ is \textit{aspherical} if its integral over every sphere is zero; by the Hurewicz and universal coefficient theorems, this implies that its lift $\wt{\omega}$ to $\wt{X}$ is exact.
\end{eg}

\begin{rmk}
One could also consider the version of Definition~\ref{Q-quasiminimizing} with $C\res B - C'$ required to be null-homologous in $X$ relative to $L$. This would result in a broader notion of quasiminimizing and would remove the need for the exactness assumption in Example~\ref{J-curves_are_quasiminimizing_eg}. However, this would impose a stricter condition on usable $C'$. In our setting, we know little about $C'$ (cf. Remark~\ref{homological_assumption_rel_filling_rmk}) and therefore use the stronger notion of $Q$-quasiminizing current.
\end{rmk}

\begin{lmm}[Isoperimetric~Inequality]\label{isoperimetric_lmm}
Let $(X,g)$ be a Riemannian manifold, $C$ be a 2-rectifiable current, and $Q \geq 1$. For a nonempty subset $S \subset X$, define
\begin{equation}\label{Isoperi_length_area_eqn}
\ell_S(r) := |C_r(d_S)| \quad \textnormal{and} \quad A_S(r) := |C\res B_r(S)|,
\end{equation}
where $C_r(d_S)$ are the $1$-rectifiable currents of Lemma~\ref{slicing_lmm}.
\begin{enumerate}[label=(\arabic*)]
\item\label{abs_isoperimetry_lmm} If $p \in X$, $C$ is $Q$-quasiminimizing, and $\varphi$ is a filling function for $(X,g)$, then
$$A_p(r) \leq Q \varphi(\ell_p(r))$$
for almost every $r \in \R$.
\item\label{rel_isoperimetry_lmm} If $R \in (0,\infty{]}$, $L \subset X$ is nonempty, $C$ is $Q$-quasiminimizing relative to $L$, and $\varphi_L$ is a filling function $R$-relative to $L$, then
$$A_L(r) \leq Q \varphi_L(r) \ell_L(r)$$
for almost every $r < R$.
\end{enumerate}
\end{lmm}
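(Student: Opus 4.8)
The plan is to establish both parts by the same three-step template: slice the current~$C$ by a distance function (to the point~$p$ in part~\ref{abs_isoperimetry_lmm}, to~$L$ in part~\ref{rel_isoperimetry_lmm}) via Lemma~\ref{slicing_lmm}, fill the resulting $1$-dimensional slice using the relevant (relative) filling function, and then compare masses using the $Q$-quasiminimizing property of~$C$ to bound the area quantity by $Q$ times the mass of the filling.

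For part~\ref{abs_isoperimetry_lmm}, I would apply Lemma~\ref{slicing_lmm} to the $1$-Lipschitz function~$d_p$. Because $C$ is $Q$-quasiminimizing, $\partial C$ is supported on $\eset$, i.e.\ $\partial C = 0$, so the second identity in~\eqref{slicing_formulas} gives $\partial\big(C\res B_r(p)\big) = C_r(d_p)$ for almost every~$r$; in particular each slice $C_r(d_p)$ is a $1$-rectifiable cycle of mass $\ell_p(r)$. Fixing $\varepsilon>0$, Definition~\ref{filling_function} provides a $2$-rectifiable~$C'$ with $\partial C' = C_r(d_p)$ and $|C'| < \varphi(\ell_p(r)) + \varepsilon$. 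Then $\partial\big(C\res B_r(p) - C'\big) = 0$, so this current is admissible in Definition~\ref{Q-quasiminimizing} with $B = B_r(p)$ and $L = \eset$, yielding $A_p(r) = |C\res B_r(p)| \le Q|C'| < Q\varphi(\ell_p(r)) + Q\varepsilon$. Letting $\varepsilon\to 0$ gives the claim.

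For part~\ref{rel_isoperimetry_lmm}, I would run the same argument with~$d_L$ in place of~$d_p$. Now $\partial C$ is supported on $L \subset B_r(L)$, so~\eqref{slicing_formulas} reads $\partial\big(C\res B_r(L)\big) = \partial C + C_r(d_L)$ and the slice $C_r(d_L)$ is again a $1$-cycle, supported on $d_L^{-1}(r)$, hence lying in $B_{r'}(L)$ for any $r'\in(r,R)$. Applying the defining property of the $R$-relative filling function $\varphi_L$ (Definition~\ref{relative_filling_function}) at radius~$r'$ produces a cycle $\gamma'$ in~$L$ with $[\gamma'] = \eta\big([C_r(d_L)]\big)$ and a $2$-rectifiable~$C'$ with $\partial C' = C_r(d_L) - \gamma'$ and $|C'| < \varphi_L(r')\,\ell_L(r) + \varepsilon$. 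The step I expect to be the crux is then checking that $C\res B_r(L) - C'$ is admissible in Definition~\ref{Q-quasiminimizing}: its boundary equals $\partial C + \gamma'$, which is supported on~$L$, and one must verify it is \emph{null-homologous in~$L$}. This is exactly where the hypothesis that $\eta$ is a retraction is used. Since $C\res B_r(L)$ is a $2$-current in $B_R(L)$, its boundary is null-homologous there, so $[C_r(d_L)] = -\io_*[\partial C]$ in $H_1(B_R(L);\Z)$, with $\io:L\hookrightarrow B_R(L)$; applying $\eta$ and using $\eta\circ\io_* = \id$ gives $[\gamma'] = -[\partial C]$ in $H_1(L;\Z)$, whence $[\partial C + \gamma'] = 0$. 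With admissibility established, Definition~\ref{Q-quasiminimizing} applied with $B = B_r(L) \supset L$ gives $A_L(r) \le Q|C'| < Q\varphi_L(r')\,\ell_L(r) + Q\varepsilon$; letting $\varepsilon\to 0$ and $r'\downarrow r$ completes the proof (the passage $r'\downarrow r$ is harmless since one may replace $\varphi_L$ by its nondecreasing majorant, which is continuous at almost every~$r$).

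The only real subtleties are the homological bookkeeping just described — which is precisely why Definition~\ref{Q-quasiminimizing} is phrased with null-homology \emph{within~$L$} rather than within~$X$ — together with the minor point that the level-$r$ slice is supported on $d_L^{-1}(r)$ rather than inside the open ball $B_r(L)$, which is why the relative filling function must be invoked at a slightly larger radius. All remaining ingredients (finite mass and closedness of the slices for almost every~$r$, the inclusion $L\subset B_r(L)$, and so on) are routine consequences of Lemma~\ref{slicing_lmm} and the definitions.
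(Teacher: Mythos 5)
Your argument is essentially the paper's own proof: slice $C$ by the distance function via Lemma~\ref{slicing_lmm}, fill the slice using the (relative) filling function, and bound the restricted mass by $Q$ times the filling's mass via Definition~\ref{Q-quasiminimizing}, with the same homological bookkeeping ($\eta\circ\iota_*=\id$ forcing $[\gamma']=-[\partial C]$, so that $\partial\big(C\res B_r(L)-C'\big)$ is supported and null-homologous in $L$). The only divergence is that you apply Definition~\ref{relative_filling_function} at a radius $r'>r$ because the slice is supported on $d_L^{-1}(r)$ rather than in the open set $B_r(L)$ --- a point the paper silently elides --- and the limit $r'\downarrow r$ does recover the stated inequality for almost every $r$ whenever $\varphi_L$ is nondecreasing (the only case used, e.g.\ in Proposition~\ref{monotonicity_prp}), though your parenthetical about passing to a nondecreasing \emph{majorant} is not quite the right justification, since enlarging $\varphi_L$ only weakens the conclusion; the correct point is that a monotone $\varphi_L$ is right-continuous off a countable set.
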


\begin{proof}
\ref{abs_isoperimetry_lmm} By Definition~\ref{filling_function}, for every $\varepsilon > 0$ there is a chain $C'$ with
\begin{equation}\label{iso_ineq_1}
\partial C' = C_r(d_p) \quad \textnormal{and} \quad |C'| < \varphi(\ell_p(r))+\varepsilon.
\end{equation}
By the second statement in (\ref{slicing_formulas}), $\partial\big(C\res B_r(p)\big) = C_r(d_p)$. Along with the $Q$-quasiminimality of $C$, (\ref{iso_ineq_1}) thus gives
$$A_p(r) \leq Q|C'| < Q \big(\varphi\big(\ell_p(r)\big) +\varepsilon\big).$$
As $\varepsilon$ is arbitrary, the claimed inequality follows.\\

\ref{rel_isoperimetry_lmm} Let $\eta$ be as in (\ref{retraction_eqn}). Since $\partial \big( C\res B_r(L)\big) = \partial C + C_r(d_L)$ by the second equation in~(\ref{slicing_formulas}), $\eta\big([C_r(d_L)] \big) = -[\partial C]$. By Definition~\ref{relative_filling_function}, there thus exist a cycle $\gamma'$ in $L$ and a $2$-rectifiable current $C'$ in $X$ such that
\begin{equation}\label{iso_ineq_2}
[\gamma'] = -[\partial C] \in H_1(L;\Z), \quad \partial C' = C_r(d_L) - \gamma', \quad \textnormal{and} \quad |C'| < \varphi_L(r)\ell_L(r) + \varepsilon.
\end{equation}
Therefore, $\partial(C\res B_r(L) - C')$ is supported and null-homologous within $L$. Along with the $Q$-quasiminimality of $C$ relative to $L$, (\ref{iso_ineq_2}) thus gives
$$A_L(r) \leq Q|C'| < Q(\varphi_L(r)\ell_L(r) + \varepsilon).$$
As $\varepsilon$ is arbitrary, the claimed inequality follows.
\end{proof}

\begin{prp}[{Monotonicity}]\label{monotonicity_prp}
Let $(X,g)$ be a Riemannian manifold, $L \subset X$, $R \in (0,\infty{]}$, $\varphi_L$ be an increasing filling function $R$-relative to $L$, and $C$ be a 2-rectifiable current $Q$-quasiminimizing relative to $L$. Then,
\begin{equation}\label{monotonicity_formula}
|C\res B_b(L)| \geq |C \res B_a(L)| \exp\Big(\frac{1}{Q}\int_a^b dr/\varphi_L(r)\Big)
\end{equation}
for any $0 < a < b < R$.
\end{prp}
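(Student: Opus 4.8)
The strategy is to convert the isoperimetric inequality of Lemma~\ref{isoperimetric_lmm}\ref{rel_isoperimetry_lmm} into a differential inequality for the area function $A(r) := |C\res B_r(L)|$ and then integrate it via Grönwall's inequality. First I would set $A(r) = |C \res B_r(L)|$ and $\ell(r) = |C_r(d_L)|$. The first formula in~(\ref{slicing_formulas}), applied with $f = d_L$, gives that $A$ is absolutely continuous (as a monotone function bounded by the total mass $|C|$, it is of bounded variation, and the mass estimate shows $A(b) - A(a) \geq \int_a^b \ell(r)\,dr$ for $a < b$; conversely $C\res B_r(L)$ is monotone in $r$ so $A$ is nondecreasing), and more precisely that $A'(r) \geq \ell(r)$ for almost every $r$. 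This is the step where one must be slightly careful: the point is that for almost every $r$, the slice $C_r(d_L)$ exists and has finite mass, and the coarea-type inequality in~(\ref{slicing_formulas}) forces the a.e.\ derivative of the monotone function $A$ to dominate $\ell(r)$.

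Next, combine this with Lemma~\ref{isoperimetric_lmm}\ref{rel_isoperimetry_lmm}, which says $A(r) \leq Q\,\varphi_L(r)\,\ell(r)$ for a.e.\ $r < R$. Together these give, for a.e.\ $r \in (0,R)$,
\[
A'(r) \;\geq\; \ell(r) \;\geq\; \frac{A(r)}{Q\,\varphi_L(r)}.
\]
If $A(a) = 0$ the inequality~(\ref{monotonicity_formula}) is trivial, so assume $A(a) > 0$; since $A$ is nondecreasing, $A(r) > 0$ on $[a,b]$. Then $\frac{d}{dr}\log A(r) = A'(r)/A(r) \geq 1/(Q\,\varphi_L(r))$ for a.e.\ $r$, and since $\log A$ is the logarithm of an absolutely continuous nondecreasing function bounded away from $0$, it is itself absolutely continuous on $[a,b]$, so integrating from $a$ to $b$ yields
\[
\log A(b) - \log A(a) \;\geq\; \frac{1}{Q}\int_a^b \frac{dr}{\varphi_L(r)},
\]
which is exactly~(\ref{monotonicity_formula}) after exponentiating. (One can alternatively phrase this as a direct application of Grönwall's inequality~\cite[Theorem~1.2.2]{Pac98} to the integral inequality $A(b) \geq A(a) + \int_a^b A(r)/(Q\varphi_L(r))\,dr$, which follows from integrating $A' \geq A/(Q\varphi_L)$.)

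The one genuinely delicate point is the regularity of $A$: one needs that $A$ is absolutely continuous (not merely of bounded variation) so that $A(b) - A(a) = \int_a^b A'(r)\,dr$ and so that no singular part of the measure $dA$ is discarded when passing to the a.e.\ derivative. I expect this to follow from the first inequality in~(\ref{slicing_formulas}): that inequality says the measure $r \mapsto |C\res d_L^{-1}([a,b])|$ dominates $\int_a^b \ell(r)\,dr$, and since $C$ has finite total mass, the measure $\mu(E) := |C \res d_L^{-1}(E)|$ is a finite Borel measure that is absolutely continuous with respect to Lebesgue measure (this is standard for the pushforward of the mass measure of a rectifiable current under a Lipschitz map with no "flat" pieces — more carefully, one uses that $d_L$ restricted to the rectifiable set carrying $C$ has, by the coarea formula for rectifiable sets, a pushforward of $\mathcal H^2\res C$ that is absolutely continuous, which is precisely the content of the slicing lemma). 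Hence $A(r) = \mu([0,r))$ is absolutely continuous and the integration above is justified. If one wished to avoid this discussion, the Bihari–LaSalle/Grönwall machinery cited in the surrounding text is designed exactly to handle the integral form of the inequality directly, so invoking~\cite[Theorem~1.2.2]{Pac98} on the integrated inequality sidesteps the need to differentiate $A$ at all.
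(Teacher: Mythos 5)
Your overall route is the same as the paper's: combine the slicing inequality with Lemma~\ref{isoperimetric_lmm}\ref{rel_isoperimetry_lmm} to get a differential inequality of the form $A' \geq A/(Q\varphi_L)$ and integrate it Gr\"onwall-style. The one step I would not accept as written is your claimed resolution of the ``delicate point'': the assertion that $A(r) = |C\res B_r(L)|$ is absolutely continuous because the pushforward of the mass measure of a rectifiable current under a Lipschitz map is absolutely continuous. That general fact is false: the coarea/slicing inequality controls only the part of the carrier where the tangential gradient of $d_L$ is nonzero, and nothing in the hypotheses prevents $C$ from carrying positive mass on a level set of $d_L$ (a ``flat piece'' at constant distance from $L$), in which case $A$ has a jump and is not even continuous. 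Your parenthetical ``with no flat pieces'' is exactly the assumption you would need and do not have, and this is precisely why (\ref{slicing_formulas}) is an inequality rather than an equality.

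Fortunately the error is inessential, because your worry about ``discarding the singular part of $dA$'' points in the wrong direction: you need a \emph{lower} bound on $A(b)-A(a)$, and for any nondecreasing function one has $\int_a^b A'(r)\,dr \leq A(b)-A(a)$ unconditionally, so a singular part of $dA$ only helps. Concretely, $A'\geq \ell_L$ a.e.\ follows from monotonicity of $A - \int_a^{\cdot}\ell_L$, and then either the monotone-function inequality applied to $\log A$, or your fallback of passing to the integral inequality and applying Gr\"onwall, closes the argument with no absolute continuity of $A$ needed. The paper sidesteps the issue in essentially this second way: it never differentiates $A_L$ at all, but works with the absolutely continuous minorant $\wt{A_L}(r) := A_L(a) + \int_a^r \ell_L(t)\,dt \leq A_L(r)$, shows $\wt{A_L} \leq Q\varphi_L \wt{A_L}'$ a.e., and deduces that $\wt{A_L}/f$ is nondecreasing for $f(r) = \exp\bigl(\tfrac{1}{Q}\int_a^r dt/\varphi_L(t)\bigr)$. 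So: same method, correct conclusion, but replace the false absolute-continuity claim by either the monotonicity observation or the minorant trick.
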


As explained in Remark~\ref{asymptotics_rmk}, we apply Proposition~\ref{monotonicity_prp} in the setting where $R$ may be taken arbitrarily large, with $\varphi_L$ and $Q$ fixed. In this case, one can consider (\ref{monotonicity_formula}) with $a$ fixed and $b$ arbitrarily large. If $1/\varphi_L$ is not integrable at infinity (i.e. $\varphi_L$ grows slowly), then the exponential term in (\ref{monotonicity_formula}) tends to infinity with $b$, yielding arbitrarily strong inequalities; this is ultimately the reason for the non-summability criterion in Remark~\ref{nonsummable_rmk}. On the other hand, (\ref{monotonicity_formula}) is of limited utility if $1/\varphi_L$ is integrable at infinity.

\begin{proof}[\bf{\emph{Proof of Proposition~\ref{monotonicity_prp}}}]
We follow the notation in (\ref{Isoperi_length_area_eqn}). Define the functions
$$f(r) := \exp\Big(\frac{1}{Q}\int_a^r dt/\varphi_L(t)\Big) \quad \textnormal{and} \quad \wt{A_L}(r) := A_L(a) + \int_a^r \ell_L(t)dt.$$
By definition, $f' = f/Q\varphi_L$. By the inequality in (\ref{slicing_formulas}), $\wt{A_L}(r) \leq A_L(r)$ for almost every $r \geq a$. Along with Lemma~\ref{isoperimetric_lmm}\ref{rel_isoperimetry_lmm}, this gives $\wt{A_L}(r) \leq Q \varphi_L(r) \wt{A_L}'(r)$ for almost every $r \geq a$. We note that both $\wt{A_L}$ and $f$ have derivatives in $L^1_{loc}$. Therefore, almost everwhere on the interval $(a,R)$,
$$(\wt{A_L}/f)' = \frac{f\wt{A_L}' - \wt{A_L}f'}{f^2} \geq \frac{\wt{A_L}f - \wt{A_L}f}{Q\varphi_L f^2} = 0.$$
Therefore, $\wt{A_L}/f$ is monotone on $(a,R)$ and
$$A_L(b) \geq \wt{A_L}(b) \geq \wt{A_L}(a)\big(f(b)/f(a)\big) = A_L(a)f(b),$$
as claimed.
\end{proof}

\begin{prp}[{Lower Area Bound}]\label{lower_area_bound_prp}
Let $(X,g)$ be a Riemannian manifold, $\varphi$ be a strictly increasing filling function for $X$, and $C$ be a $Q$-quasiminimizing 2-rectifiable current.
Let $p\in X$ and $a >0$ be such that $A_p(a) := |C \res B_a(p)| > 0$.
Define
$$f(s) := \int_{A_p(a)}^s \frac{dt}{\varphi^{-1}(t/Q)}\, .$$
Then, for any $b > a$,
$$f\big(|C \res B_b(p)|\big) \geq b-a.$$
\end{prp}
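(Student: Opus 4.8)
The idea is to run the argument of Proposition~\ref{monotonicity_prp} with the roles of growth and isoperimetry reversed: there an isoperimetric inequality was fed into Grönwall's inequality to produce an exponential \emph{lower} bound on area growth, whereas here I feed the \emph{absolute} isoperimetric inequality of Lemma~\ref{isoperimetric_lmm}\ref{abs_isoperimetry_lmm} into the Bihari--LaSalle inequality~\cite[Theorem~2.3.1]{Pac98} to integrate a genuinely nonlinear differential inequality. Using the notation of (\ref{Isoperi_length_area_eqn}) with $S = \{p\}$, I would first set $\wt{A_p}(r) := A_p(a) + \int_a^r \ell_p(t)\,dt$ for $r \geq a$, exactly as in the proof of Proposition~\ref{monotonicity_prp}. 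Since mass is additive over disjoint Borel sets and $B_r(p)\setminus B_a(p) = d_p^{-1}\big([a,r)\big)$, the first formula in (\ref{slicing_formulas}) gives $\int_a^r \ell_p(t)\,dt \leq |C\res d_p^{-1}([a,r))| = A_p(r) - A_p(a)$, so $\wt{A_p} \leq A_p$ on $[a,\infty)$, with $\wt{A_p}(a) = A_p(a) > 0$ and $\wt{A_p}$ absolutely continuous with $\wt{A_p}' = \ell_p$ almost everywhere.

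Next I would convert the isoperimetric bound into a differential inequality. Lemma~\ref{isoperimetric_lmm}\ref{abs_isoperimetry_lmm} gives $A_p(r) \leq Q\varphi(\ell_p(r))$ for almost every $r$; since $\varphi$ is strictly increasing, applying $\varphi^{-1}$ yields $\ell_p(r) \geq \varphi^{-1}\big(A_p(r)/Q\big)$ for almost every $r$. Because $\varphi^{-1}$ is increasing and $\wt{A_p}(r) \leq A_p(r)$, this gives
$$\wt{A_p}'(r) = \ell_p(r) \geq \varphi^{-1}\big(\wt{A_p}(r)/Q\big) \qquad \text{for a.e. } r \geq a,$$
together with the initial value $\wt{A_p}(a) = A_p(a)$. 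In integral form, $\wt{A_p}(r) \geq A_p(a) + \int_a^r \varphi^{-1}\big(\wt{A_p}(t)/Q\big)\,dt$, which is precisely the hypothesis of the (reverse) Bihari--LaSalle comparison theorem with weight $w(t) = \varphi^{-1}(t/Q)$ and primitive $G = f$; note that $f(A_p(a)) = 0$ by the choice of lower limit in the definition of $f$.

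The conclusion then drops out by comparison with the associated ODE. The function $\Phi(r) := f^{-1}(r - a)$ satisfies $\Phi(a) = f^{-1}(0) = A_p(a)$ and, differentiating $f(\Phi(r)) = r-a$, also $\Phi'(r) = \varphi^{-1}\big(\Phi(r)/Q\big)$; hence Bihari--LaSalle yields $\wt{A_p}(r) \geq \Phi(r)$ for all $r \in [a,b]$. Equivalently, one may check directly that $\frac{d}{dr} f\big(\wt{A_p}(r)\big) = \wt{A_p}'(r)\big/\varphi^{-1}\big(\wt{A_p}(r)/Q\big) \geq 1$ almost everywhere, so integrating over $[a,b]$ gives $f\big(\wt{A_p}(b)\big) \geq f\big(\wt{A_p}(a)\big) + (b-a) = b-a$. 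Since $\wt{A_p}(b) \leq A_p(b) = |C\res B_b(p)|$ and $f$ is increasing, $f\big(|C\res B_b(p)|\big) \geq f\big(\wt{A_p}(b)\big) \geq b-a$, as claimed.

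\textbf{Main obstacle.} The only real friction is bookkeeping about the regularity of $f$, i.e.\ ensuring the chain rule / comparison step above is legitimate. If $\varphi^{-1}(t/Q)$ vanishes at some point of $\big[A_p(a),|C\res B_b(p)|\big]$ — which can occur, for instance when $\varphi(0) > 0$ — then the integrand $1/\varphi^{-1}(t/Q)$ forces $f\big(|C\res B_b(p)|\big) = +\infty$ and the asserted inequality is vacuous; so one may freely assume $\varphi^{-1}(\cdot/Q)$ is bounded below by a positive constant on the relevant interval. In that case $f$ is a genuine, strictly increasing, locally absolutely continuous function, $\wt{A_p}$ is absolutely continuous and monotone, so $f\circ\wt{A_p}$ is absolutely continuous and the fundamental theorem of calculus applies — which is exactly what the cited form of Bihari--LaSalle encapsulates.
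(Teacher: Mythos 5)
Your proposal is correct and follows essentially the same route as the paper: define $\wt{A_p}(r) = A_p(a) + \int_a^r \ell_p(t)\,dt$, use the slicing inequality to get $\wt{A_p}\le A_p$, feed Lemma~\ref{isoperimetric_lmm}\ref{abs_isoperimetry_lmm} into the resulting differential inequality, and integrate $\frac{d}{dr}f(\wt{A_p}(r))\ge 1$ from $a$ to $b$ — which is exactly the paper's chain-rule computation (the paper invokes Bihari--LaSalle only as motivation in the section introduction, and your "equivalently, check directly" step is precisely its proof). Your closing remark on the degenerate case where $\varphi^{-1}(\cdot/Q)$ fails to be positive is a reasonable technical aside that the paper handles implicitly by noting $\varphi$ is positive and strictly increasing.
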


\begin{proof}
Since $\varphi$ is strictly increasing, it is invertible and thus $f$ is well-defined. Similarly, as $\varphi$ is positive, $f$ is increasing. With notation as in (\ref{Isoperi_length_area_eqn}), let
$$\wt{A_p}(r) := A_p(a) + \int_a^r \ell_p(t) dt.$$
By the inequality in (\ref{slicing_formulas}), $\wt{A_p}(r) \leq A_p(r)$ for $r \geq a$. Along with Lemma~\ref{isoperimetric_lmm}\ref{abs_isoperimetry_lmm}, this gives $\wt{A_p}(r) \leq Q \varphi(\wt{A_p}'(r))$ for almost every $r \geq a$. As $f$ is Lipschitz on $(A_p(a),\infty)$ and $\wt{A_p}$ has a derivative in $L^1_{loc}$ on the interval $(a,\infty)$,
\begin{equation}\label{chain_rule_calc}
\big(f(\wt{A_p}(r))\big)' = f'\big(\wt{A_p}(r)\big)\wt{A_p}'(r) = \wt{A_p}'(r)/\varphi^{-1}\big(\wt{A_p}(r)/Q\big) \geq \wt{A_p}'(r)/\wt{A_p}'(r) = 1
\end{equation}
almost everywhere on $(a,\infty)$.
Since $f\big(\wt{A_p}(a)\big) = 0$ and $f$ is increasing, integrating (\ref{chain_rule_calc}) from $a$ to $b$ gives
$$f\big(A_p(b)\big) \geq f\big(\wt{A_p}(b)\big) \geq b-a,$$
as claimed.
\end{proof}

\begin{eg}
Let $C$ be a $Q$-quasiminimizing surface passing through a point $p$ in a Euclidean space. In this case, $|C\res B_r(p)| > 0$ for all $r > 0$ and (by taking a cone) we may take $\varphi(r) = r^2/2$. Proposition~\ref{lower_area_bound_prp} then implies that $|C\res B_r(p)| \geq r^2/2Q$. Let $(X,J,g)$ be a closed almost Hermitian manifold. Every point has a neighborhood on which $J$ is tamed by an exact symplectic form and $g$ is approximately the Euclidean metric. Along with Example~\ref{J-curves_are_quasiminimizing_eg}, this implies that, for $r > 0$ sufficiently small, the amount of area of a closed $J$-holomorphic curve contained in a ball of radius $r$ around one of its points is bounded below by a constant times $r^2$ (cf. \cite[Proposition~4.3.1]{Sik94}).
\end{eg}

\begin{lmm}\label{isoperimetry_from_coarse_isoperimetry_lmm}
Let $(D,g)$ be a disk with a Riemannian metric. There exists an embedded disk $D'$ with rectifiable boundary such that $\partial D' \subset B_1(\partial D)$ and
\begin{equation}\label{iso_coarse_iso_ineq}
\frac{\textnormal{length}(\partial D')}{\textnormal{area}(D')} \leq 
\frac{\textnormal{area}(B_1(\partial D))}{\textnormal{area}(D - B_1(\partial D))}\, .
\end{equation}
\end{lmm}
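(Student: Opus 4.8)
The plan is to take $D'$ to be the region enclosed by a carefully chosen level set of the distance function $f:=d_{\partial D}$ to the boundary, the level being selected by a co-area (averaging) argument. We may assume $D$ has finite area, and, if $D=B_1(\partial D)$, the right-hand side of~(\ref{iso_coarse_iso_ineq}) is infinite and any small geodesic ball in the interior of $D$ serves as $D'$; so assume $D-B_1(\partial D)=\{f\ge 1\}$ has positive measure.

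To find a good level, note that $f$ is $1$-Lipschitz, so $|\nabla f|\le 1$ almost everywhere, and the co-area formula gives
$$\int_0^1\mathcal H^1\big(\{f=t\}\big)\,dt=\int_{\{0<f<1\}}|\nabla f|\le\textnormal{area}\big(\{f<1\}\big)=\textnormal{area}\big(B_1(\partial D)\big).$$
Hence the set of $t\in(0,1)$ with $\mathcal H^1(\{f=t\})\le\textnormal{area}(B_1(\partial D))$ has positive measure. Intersecting it with the full-measure set of $t$ for which $\{f=t\}$ is a finite disjoint union $\gamma_1\sqcup\dots\sqcup\gamma_N$ of rectifiable Jordan curves contained in $\textnormal{int}(D)$ and has vanishing area --- a standard consequence of the structure of the cut locus of the smooth curve $\partial D$ together with the co-area formula --- we fix such a $t$. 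Here $N\ge 1$, since $\{f>t\}$ is nonempty (it contains $\{f\ge1\}$) but is not all of the connected set $D$ (as $f$ vanishes on $\partial D$).

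Now the pigeonhole. For each $i$ let $\Delta_i\subset\textnormal{int}(D)$ be the closed disk bounded by $\gamma_i$, that is, the closure of the component of $D\setminus\gamma_i$ not meeting $\partial D$; by the Jordan--Schoenflies theorem this is an embedded disk with rectifiable boundary $\gamma_i$. Let $S$ be the set of $i$ such that $\Delta_i\not\subset\Delta_j$ for all $j\ne i$; for distinct $i,j\in S$ the interiors of $\Delta_i$ and $\Delta_j$ are disjoint. Since any path from a point of $\{f>t\}$ to $\partial D$ must cross $\{f=t\}=\bigcup_i\gamma_i$, the set $\{f>t\}$ lies in the union of the $\Delta_i$, hence in $\bigcup_{i\in S}\Delta_i$; therefore
$$\sum_{i\in S}\textnormal{area}(\Delta_i)=\textnormal{area}\Big(\bigcup_{i\in S}\Delta_i\Big)\ge\textnormal{area}\big(\{f>t\}\big)\ge\textnormal{area}\big(\{f\ge1\}\big)=\textnormal{area}\big(D-B_1(\partial D)\big),$$
while $\sum_{i\in S}\textnormal{length}(\gamma_i)\le\mathcal H^1(\{f=t\})\le\textnormal{area}(B_1(\partial D))$. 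An elementary pigeonhole using these two displays produces some $i\in S$ with $\textnormal{length}(\gamma_i)/\textnormal{area}(\Delta_i)\le\textnormal{area}(B_1(\partial D))/\textnormal{area}(D-B_1(\partial D))$; put $D':=\Delta_i$, and observe $\partial D'=\gamma_i\subset\{f=t\}\subset\{f<1\}=B_1(\partial D)$ since $t<1$.

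The elementary core of the argument is the co-area estimate plus this pigeonhole; the delicate point --- and where essentially all the care goes --- is the regularity of the level sets of $f=d_{\partial D}$: one needs that for almost every $t$ the set $\{f=t\}$ is a finite union of rectifiable Jordan curves, so that "enclosed region" is meaningful and yields a genuine embedded disk, and that $\{f=t\}$ has zero area, so that the areas of the $\Delta_i$ are additive. Both are standard facts about distance functions to smooth submanifolds of Riemannian manifolds (the cut locus of $\partial D$ is a graph of finite length, and $f$ is smooth with unit gradient away from it), but verifying them cleanly --- or, if one wishes to allow $g$ to be only a degenerate pullback metric as in the intended application, first reducing to the genuine case by a small perturbation --- is the one nontrivial ingredient.
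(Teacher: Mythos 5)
Your strategy is the same as the paper's: slice by the distance function $f=d_{\partial D}$, use a coarea/averaging argument to find a level $t\in(0,1)$ whose total length is at most $\textnormal{area}(B_1(\partial D))$, observe that the region beyond that level has area at least $\textnormal{area}(D-B_1(\partial D))$, and pigeonhole among the resulting pieces. The gap is exactly at the step you yourself flag as ``the one nontrivial ingredient'': the structure of $\{f=t\}$ for a.e.\ $t$. Your justification --- that the cut locus of $\partial D$ is a graph of finite length and $f$ is smooth with unit gradient off it, whence a.e.\ level set is a \emph{finite} disjoint union of rectifiable Jordan curves with no point components --- is not a standard fact for merely smooth metrics, and as stated it is too strong. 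Finiteness of $\mathcal H^1$ of the cut locus of a boundary curve is a delicate theorem even in the smooth surface case, the cut locus need not be a nice finite graph, and a level passing through cut points can a priori have components that are graphs with vertices of degree $\ge 3$ (several wavefront arcs meeting at a cut point) rather than Jordan curves; point components (e.g.\ the center of the round disk at the top level) also show that ``no point components'' requires an argument, not an assertion. The paper resolves precisely this point without any cut-locus analysis by invoking the refined coarea formula of \cite[Theorem~2.5.iv]{Alb13}, valid for arbitrary Lipschitz functions: for a.e.\ $t$ the level set $f^{-1}(t)$ is a disjoint union of rectifiable Jordan curves \emph{and points}, with total length controlled as in your coarea estimate. (The paper then gets its disks slightly differently --- connectedness of the sublevel sets, since $\partial D$ is connected, forces the superlevel components to be simply connected, hence disks --- whereas you take the regions enclosed by the Jordan curves via Jordan--Schoenflies; both routes are fine.)

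With the correct (weaker) structure theorem your argument survives, but two of your steps need small repairs. First, the containment $\{f>t\}\subset\bigcup_i\Delta_i$ does not follow merely from ``every path to $\partial D$ crosses $\{f=t\}$'' once point components are allowed and the family of curves may be countable: you should use the separation theorem (if a compact subset of the sphere separates two points, some connected component of it does; a single point cannot separate a surface, so some Jordan curve $\gamma_i$ separates $x$ from $\partial D$, i.e.\ $x\in\Delta_i$). Second, drop the finiteness and the maximal-disjoint-subfamily device, which may fail for infinitely many nested curves and is unnecessary anyway: countable subadditivity of area already gives $\sum_i\textnormal{area}(\Delta_i)\ge\textnormal{area}(\{f>t\})\ge\textnormal{area}(D-B_1(\partial D))$, and the pigeonhole then runs verbatim. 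So: right approach, correct skeleton, but the regularity input must be the cited Sard-type/refined coarea result (or an equally rigorous substitute), not cut-locus folklore.
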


\begin{proof}
Let $f: D \lra \left[ 0,\infty \right)$ be the distance function to the boundary. It is clearly 1-Lipschitz. As $\partial D$ is connected, it follows that the sublevel sets $f^{-1}\big([0,r]\big)$ are connected. Therefore, the connected components of the superlevel sets $f^{-1}\big((r,\infty)\big)$ are simply connected \cite[Chapter~4.4.2, Definition~1]{Ahl78} and thus are disks \cite[Chapter 6.1.1, Theorem~1]{Ahl78}. By the refined coarea formula in \cite[Theorem~2.5.iv]{Alb13}, there exists $r \in (0,1)$ such that $f^{-1}(r)$ is a disjoint union of rectifiable Jordan curves and points, with total length at most $\textnormal{area}(B_1(\partial D))$. A subset of this set bounds the superlevel set $S := f^{-1}\big((r,\infty)\big)$, which has area at least $\textnormal{area}(D - B_1(\partial D))$. Therefore,
$$\frac{\textnormal{length}(\partial S)}{\textnormal{area}(S)} \leq 
\frac{\textnormal{area}(B_1(\partial D))}{\textnormal{area}(D - B_1(\partial D))} \, .$$
Since $S$ is a disjoint union of disks with disjoint boundaries, at least one of these disks $D'$ also satisfies the claimed inequality.
\end{proof}

\begin{rmk}\label{discrete_zeros_rmk}
By approximating, one sees that Lemma~\ref{isoperimetry_from_coarse_isoperimetry_lmm} applies even if the Riemannian metric has isolated zeros. For instance, it applies to the metric induced on the disk by a map to a Riemannian manifold which is an immersion outside of a discrete set. By~\cite[Lemma~2.4.1]{McD12}, Lemma~\ref{isoperimetry_from_coarse_isoperimetry_lmm} thus applies to the metric induced by a nonconstant pseudoholomorphic map.
\end{rmk}

\begin{rmk}\label{asymptotic_isoperimetry_rmk}
As explained after Proposition~\ref{monotonicity_prp}, if the growth of the filling function $\varphi_L$ in Proposition~\ref{monotonicity_prp} is at most linear (or, more generally, $1/\varphi_L$ is not integrable at infinity), then the exponential in (\ref{monotonicity_formula}) increases to infinity with $b$. Combined with (\ref{iso_coarse_iso_ineq}), this implies that, if $\varphi_L$ is a filling function $R$-relative to a subset $L$, then every $Q$-quasiminimizing disk $D$ with boundary on $L$ contains a slightly smaller disk $D' \subset D$ satisfying
\begin{equation}\label{reverse_iso_ineq}
\textnormal{area}(D') \geq C(R)\textnormal{length}(\partial D')
\end{equation}
for a function $C: (0,\infty) \lra (0,\infty)$ increasing to infinity which depends only on $\varphi_L$ and $Q$. 
In the setting of Example~\ref{J-curves_are_quasiminimizing_eg}, this yields a version of the reverse isoperimetric inequality of \cite{Gro14, Duv16} that is suitable for establishing Theorem~\ref{existence_thm}.
\end{rmk}

\begin{rmk}
Proposition~\ref{monotonicity_prp} does not imply (\ref{reverse_iso_ineq}) directly, because we cannot control the exponential in (\ref{monotonicity_formula}) in the limit as $a$ tends to zero. Therefore, we rely on Lemma~\ref{isoperimetry_from_coarse_isoperimetry_lmm} to derive (\ref{reverse_iso_ineq}) and ultimately construct an Ahlfors current. As (\ref{iso_coarse_iso_ineq}) only applies on a smaller disk, so does (\ref{reverse_iso_ineq}) and we therefore cannot control the homology class of the resulting Ahlfors current. However, with more geometric control near $L$, one could improve the estimate in Proposition~\ref{monotonicity_prp} in the limit as $a$ tends to $0$ and thus avoid the appeal to Lemma~\ref{isoperimetry_from_coarse_isoperimetry_lmm}. This would retain control over the homology class of the Ahlfors current constructed in the proof of Theorem~\ref{existence_thm}.
\end{rmk}

\subsection{Continuity method}\label{existence_of_disks_sec}
This section contains the proofs of Theorem~\ref{existence_thm} and Corollaries~\ref{surfaces_crl}, \ref{nonpositive_crl}, and \ref{group_crl}. 
The following lemma, which is proved in the same manner as \cite[Theorem 2.3.C]{Gro85}, provides disks to which one can apply Proposition~\ref{monotonicity_prp}.
\begin{lmm}\label{existence_of_disks_lmm}
Let $(X,\omega_X)$ be a closed symplectic manifold and $L_X$ be a Lagrangian in $X$ such that $\omega_X$ vanishes on $\pi_2(X,L_X)$. Let $(T,\omega_T)$ be a symplectic 2-torus and $S$ be a circle bounding a disk $D$ in $T$. Then, for every almost complex structure $J$ on $X \times T$ tamed by $\omega := \omega_X \oplus \omega_T$, the Lagrangian $L := L_X \times S$ bounds a $J$-holomorphic disk. 
\end{lmm}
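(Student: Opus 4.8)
The plan is to run a continuity argument in the space of tamed almost complex structures, following the scheme of Gromov's proof in \cite[Theorem 2.3.C]{Gro85}. Fix a tamed $J$ on $X \times T$ and a path $J_t$, $t \in [0,1]$, of tamed almost complex structures connecting $J_0$, a split structure $J_X \oplus J_T$ for which $\{pt\} \times D$ is visibly $J_0$-holomorphic with boundary on $L = L_X \times S$, to $J_1 = J$. Let $\mathcal{T} \subset [0,1]$ be the set of $t$ for which $L$ bounds a $J_t$-holomorphic disk in the class $A := [\{pt\}\times D] \in \pi_2(X\times T, L)$. The goal is to show $\mathcal{T}$ is nonempty, open, and closed, hence all of $[0,1]$.

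\smallskip
\textbf{Nonemptiness and openness.} Nonemptiness is immediate since $0 \in \mathcal{T}$. For openness one invokes the standard implicit-function-theorem / automatic-transversality package for $J$-holomorphic disks with Lagrangian boundary: the disk $\{pt\}\times D$ (and more generally the relevant disks) should be regular because the normal bundle together with the totally real boundary condition has the right Maslov index — here the disk is a section-like object and its Maslov number is computed from $T^2$, giving Maslov $2$, so the linearized operator is surjective and the moduli space is a manifold of the expected dimension; small perturbations of $J_t$ then still admit disks. If one prefers to avoid a transversality discussion, one can instead argue directly that $\mathcal{T}$ is closed and use a Sard–Smale / generic path argument, but the cleanest route is automatic transversality as in Oh's work on disks with Maslov index $2$.

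\smallskip
\textbf{Closedness.} This is the main obstacle and the heart of the argument. Suppose $t_i \to t_\infty$ with $J_{t_i}$-holomorphic disks $u_i: (D,\partial D) \to (X\times T, L)$ in class $A$. One must produce a limiting $J_{t_\infty}$-holomorphic disk, so one needs (i) an a priori energy bound and (ii) a no-escape-of-energy statement ruling out bubbling that destroys the disk. The energy $\int u_i^*\omega$ is topological, equal to $\omega(A)$, hence uniformly bounded — this is where symplectic asphericity of $\omega_X$ (more precisely, vanishing of $\omega$ on $\pi_2(X\times T, L)$, which follows from $\omega_X$ vanishing on $\pi_2(X,L_X)$ together with $T^2$ being aspherical) is used: it guarantees the class $A$ is the only one that can appear and that the energy identity is exact with no area lost to sphere bubbles of positive energy, since any such bubble would have to carry zero $\omega$-area and hence be constant. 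Similarly, disk bubbles on the boundary carry nonnegative $\omega$-area, and again $\omega$ vanishing on $\pi_2(X\times T,L)$ forces them to be constant or to recombine into a disk still in class $A$. Then Gromov compactness for disks with Lagrangian boundary (e.g. \cite{Gro85}, or the treatment in \cite{McD12}) gives a subsequence converging to a stable $J_{t_\infty}$-holomorphic disk; after collapsing ghost components one obtains an honest nonconstant $J_{t_\infty}$-holomorphic disk with boundary on $L$ in class $A$, so $t_\infty \in \mathcal{T}$.

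\smallskip
\textbf{Conclusion.} Since $[0,1]$ is connected and $\mathcal{T}$ is nonempty, open, and closed, $1 \in \mathcal{T}$, which is precisely the claim. I expect the bubbling analysis in the closedness step to require the most care: one must check that \emph{all} bubbling configurations are excluded or harmless using only the asphericity hypothesis and the nonnegativity of symplectic area on disks and spheres, and one must be careful that the limit disk is genuinely nonconstant (its $\omega$-area equals $\omega(A) = \omega_T(D) > 0$, which handles this). A secondary technical point is ensuring the class $A$ is preserved along the whole deformation, which again follows from the discreteness of symplectic area values forced by asphericity.
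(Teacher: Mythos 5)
Your closedness step is essentially the right compactness argument, but it rests on a false statement: $\omega$ does \emph{not} vanish on $\pi_2(X\times T,L)$ --- the class $A=[\{pt\}\times D]$ itself satisfies $\omega(A)=\omega_T(D)>0$ (your own nonconstancy argument uses this, so the proposal is internally inconsistent). What is true, and what the paper's proof actually uses, is a minimality/quantization statement: since $\omega_X$ kills the $X$-projection of any disk class (by hypothesis) and $\pi_2(T,S)\cong\Z$ is generated by $[D]$, every nonconstant disk or sphere component in a limit has $\omega$-area in $\omega_T(D)\cdot\Z_{\geq 1}$ or equal to $0$, so no nontrivial splitting of $A$ can occur. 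With that correction the compactness half goes through.

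The genuine gap is the openness step. Automatic transversality for Maslov-index-$2$ disks is not available in this generality: Oh's Riemann--Hilbert criterion requires all \emph{partial} indices of the boundary value problem to be $\geq -1$, and the total Maslov index $2$ does not control the partial indices once the complex rank of $u^*T(X\times T)$ exceeds one (e.g.\ a splitting with partial indices $(4,0,\dots,0,-2)$ is not excluded). So for an arbitrary tamed $J_t$ the disks in class $A$ need not be regular, your set $\mathcal{T}$ need not be open, and the bare continuity method on ``a disk exists'' does not close up; the fallback ``Sard--Smale / generic path argument'' you mention is exactly the missing content, not a detail. The paper's proof of Lemma~\ref{existence_of_disks_lmm} resolves this by proving regularity only at the split structure $J_0$, where the linearization is complex linear because the Nijenhuis term $N(\xi,\textnormal{d}u(v))$ vanishes (Lemma~\ref{split_str_lmm}: $\textnormal{d}u$ takes values in the $T$-factor and $N_{J_T}=0$), identifying $\mathfrak{M}_{0,1}(J_0,[D])\cong L_X\times S$, and then replacing ``existence of a disk'' by the deformation-invariant quantity $\deg_2(\textnormal{ev}_J:\mathfrak{M}_{0,1}(J,[D])\to L)$, which is transported along a generic path by a cobordism whose compactness is guaranteed by the same no-bubbling argument. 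That degree-theoretic reformulation (Gromov's original scheme) is precisely what substitutes for the openness you assert; to repair your proposal you must either prove the regularity claim (which fails in this generality) or recast the argument in this degree/cobordism form.
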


\begin{proof}
For an almost complex structure $J$ on $X$, let $\mathfrak{M}_{0,1}(J,[D])$ denote the space of $J$-holomorphic disks homologous to $D$ relative to $L$ with one marked point on the boundary. This moduli space is compact, as is its analogue for a compact one-parameter family of almost complex structures $J$ tamed by $\omega$. Sphere bubbling cannot occur because $\omega$ is aspherical. Neither disk bubbling nor collapsing to a multiple cover can occur because the pairing of $[D]$ with $\omega$ is minimal among relative homology classes pairing positively with $\omega$. Let
$$\textnormal{ev}_J: \mathfrak{M}_{0,1}(J,[D]) \lra L$$
be the evaluation map taking such a disk to its marked point.\\

We first consider a special almost complex structure. Equip $X$ and $T$ with almost complex structures $J_X$ and $J_T$ which are tamed by $\omega_X$ and $\omega_T$. Let $J_0$ be the product almost complex structure on $X \times T$. Since $\omega_X$ vanishes on $\pi_2(X,L_X)$, the projection of a $J_0$-holomorphic disk to $X$ is constant. Therefore, each element in $\mathfrak{M}_{0,1}(J_0,[D])$ has the form $(x \!\times\! D,p)$, where $x\in X$ and $p \in \partial D$. Conversely, each such disk lies in $\mathfrak{M}_{0,1}(J_0,[D])$. Furthermore, the linearization of the Cauchy-Riemann operator on each disk is complex linear (see Lemma~\ref{split_str_lmm}). Therefore, as in \cite[Lemma~3.3.1]{McD12}, it follows that each disk is Fredholm regular. Thus, $\textnormal{ev}_{J_0}$ has degree $1\!\!\mod 2$.\\

For a generic almost complex structure $J_1$ tamed by $\omega$, $\mathfrak{M}_{0,1}(J_1,[D])$ is a smooth manifold \cite[Theorem~3.1.6]{McD12}. A generic path of almost complex structures tamed by $\omega$ connecting $J_0$ to $J_1$ provides a bordism between $\textnormal{ev}_{J_0}$ and $\textnormal{ev}_{J_1}$ \cite[Theorem~3.1.8]{McD12}. As the former has degree $1\!\!\mod 2$, the claim follows.
\end{proof}

\begin{proof}[{\bf{\emph{Proof of Theorem~\ref{existence_thm}}}}]
As $X$ retracts onto $L$, the map $\pi_1(L) \lra \pi_1(X)$ induced by inclusion is injective. It follows from the long exact sequence of relative homotopy groups that the natural map $\pi_2(X) \lra \pi_2(X,L)$ is a surjection. Therefore, as $(X,\omega)$ is symplectically aspherical, $\omega$ vanishes on $\pi_2(X,L)$. Let $J$ be an almost complex structure on $X \times T^2$ which is tamed by $\omega \oplus \omega_{std}$. Lemma~\ref{existence_of_disks_lmm} implies that $L \times S$ bounds a $J$-holomorphic disk for every embedded circle $S \subset T^2$.\\

Let $\wt{X}$ be the universal cover of $X$ and $\wt{L}$ and $\wt{J}$ be the lifts of $L$ and $J$ to $\wt{X} \times \R^2$, respectively. Let $S_R \subset \R^2$ be a circle of radius $R>0$ and
$$L_R := \wt{L} \times S_{2R} \subset \wt{X} \times \R^2.$$
There exists a finite cover $T'$ of $T^2$ such that the image of $S_{2R}$ in  $T'$ is an embedded circle $S$. By the above, $L \! \times \! S$ bounds a $J'$-holomorphic disk, where $J'$ is the lift of the almost complex structure $J$ on $X \times T^2$ to $X \times T'$. This disk lifts to a $\wt{J}$-holomorphic disk $D_R$ in $\wt{X} \times \R^2$ with boundary on $L_R$.\\

By assumption, the optimal filling function for $\wt{X}$ has at most quadratic growth. Therefore, by Lemma~\ref{product_with_space_lmm}, so does that for $\wt{X} \times \R^2$. Denote it by $\varphi$. We now show that there exists $K\geq 1$, independent of $R$, such that
$$\varphi_L := \varphi_{L_R}: {[}0,\infty{)} \lra {[}0,\infty{)}, \quad \varphi_L(r) = \frac{2\varphi(\relbound Kr)}{r}$$
is a filling function for $\wt{X} \times \R^2$ $R$-relative to $L_R$.
We may take the retraction $X \lra L$ to be smooth \cite[Theorem~6.26]{Lee13}, and therefore $K_1$-Lipschitz, for some $K_1 \geq 1$. It lifts to a Lipschitz retraction of the universal cover $\wt{X} \lra \wt{L}$.
There exists $K_2 \geq 1$ such the the radial projection $B_1(S_2) \lra S_2$ is $K_2$-Lipschitz. By scaling, the radial projection $B_{R}(S_{2R}) \lra S_{2R}$ is $K_2$-Lipschitz. Therefore, there exists $K \geq 1$, independent of $R$, such that $B_R(L_R)$ $K$-Lipschitz retracts onto $L_R$. As $\varphi$ is superadditive by Lemma~\ref{superadditivity_lmm}, it follows from Lemma~\ref{relative_from_filling_lmm} that $\varphi_L$ is a filling function for $\wt{X} \times \R^2$ $R$-relative to $L_R$.\\

By Example~\ref{J-curves_are_quasiminimizing_eg}, there exists $Q \geq 1$, independent of $R$, such that $D_R$ is $Q$-quasiminimizing relative to $L_R$.
As $\varphi$ has at most quadratic growth, $\varphi_L$ has at most linear growth. Therefore, by Proposition~\ref{monotonicity_prp},
\begin{equation}\label{asymptotic_coarse_ineq}
\lim_{R \rightarrow \infty}\frac{|D_R \res B_1(L_R)|}{|D_R|} = 0.
\end{equation}
Let $u: D_R \lra \wt{X} \times \R^2$ be a $\wt{J}$-holomorphic parameterization of $D_R$. With respect to the pullback metric, $B_1(\partial D_R) \subset u^{-1}\big(B_1(L_R)\big)$ and $\textnormal{area}(D_R) = |D_R|$. Therefore, by (\ref{asymptotic_coarse_ineq}),
$$\lim_{R \rightarrow \infty} \frac{\textnormal{area}(B_1(\partial D_R))}{\textnormal{area}(D_R)} = 0.$$
Along with Lemma~\ref{isoperimetry_from_coarse_isoperimetry_lmm}, this implies that there exists a family of $\wt{J}$-holomorphic disks $D'_R \subset D_R$ such that
$$\lim_{R \rightarrow \infty} \frac{\textnormal{length}(\partial D'_R)}{\textnormal{area}(D'_R)} = 0,$$
which yields an Ahlfors current on $X \times T^2$ by \cite[Lemma~26.14]{Sim83}. The claim now follows from \cite[Théorème]{Duv08}.
\end{proof}

\begin{proof}[{\bf{\emph{Proof of Corollary~\ref{surfaces_crl}}}}]
If at least one of the genera is $0$, the claim follows from \cite[Theorem~2.3.C]{Gro85}. Therefore, assume no genus is $0$. If at least one of the genera is $1$, then consider the product of the surfaces, excluding that one. Denote it by $X'$. It admits a metric of nonpositive curvature, so by Example~\ref{nonpositive_filling_function_eg}, $\delta_{X'}$ grows at most quadratically. Furthermore, each surface of genus at least $1$ retracts onto a closed curve. Take the product of these closed curves and denote it by $L$. The submanifold $L$ is a closed Lagrangian onto which $X'$ retracts and the claim follows from Theorem~\ref{existence_thm}.\\

Suppose that all of the genera are at least $2$. Then, each of their symplectic forms is hyperbolic in the sense of Definition~\ref{hyperbolic_form_dfn} \cite[Example~0.2.C']{Gro91}. Therefore, there exist $C > 0$ and $\beta$ such that $d\beta = \pi^* \omega$ on $\wt{X}$ and $|\beta| < C$. Let $J$ be an almost complex structure on $X$ tamed by $\omega$. Then, there exists $C' > 0$ such that, for any $J$-holomorphic disk $D$ in $X$, $\textnormal{area}(D) < C' \int_D \omega$. A disk in $X$ lifts to the universal cover $\wt{X}$ and by Stokes' theorem satisfies
$$\int_D \omega = \int_{\partial D} \beta \leq C \cdot \textnormal{length}(\partial D).$$
Therefore, $J$-holomorphic disks in $X$ satisfy a linear isoperimetric inequality. Thus, $(X,J)$ contains no Ahlfors currents. By the Ahlfors lemma \cite[Lemma~6.9]{Gro99} (see also \cites{Duv17}), $(X,J)$ is therefore hyperbolic.
\end{proof}

\begin{proof}[{\bf{\emph{Proof of Corollary~\ref{nonpositive_crl}}}}]
By Example~\ref{nonpositive_filling_function_eg}, $\delta_X$ grows at most quadratically. The lift $\wt{L}$ of $L$ to the universal cover $\wt{X}$ is totally geodesic with respect to the lifted metric. The normal exponential map provides a $1$-Lipschitz retraction by \cite[Chapter~X, Theorems~2.4 and 2.5]{Lan99}. The claim follows from Theorem~\ref{existence_thm} and Remark~\ref{nonsummable_rmk}.
\end{proof}

\begin{proof}[{\bf{\emph{Proof of Corollary~\ref{group_crl}}}}]
The diagonal in $(\textnormal{Sym}^n(\Sigma_g) \times \textnormal{Sym}^n(\Sigma_g), \omega \oplus -\omega)$ is a closed Lagrangian and the map $(x,y) \mapsto (x,x)$ is a retraction onto the diagonal. By \cite[Corollary 3.5 and Theorem~4.1]{DiC25}, $(\textnormal{Sym}^n(\Sigma_g), \omega)$ has free abelian fundamental group and is symplectically aspherical if $g \geq 2n-1$. Therefore, the claim follows from Corollary~\ref{free_abelian_crl} and Theorem~\ref{existence_thm}.
\end{proof}

\section{Bubbling Ahlfors currents}\label{bubbling_sec}
This section is devoted to the proof of Theorem~\ref{bubbling_thm} and Corollaries~\ref{GW_crl} and \ref{partial_hyperbolicity_crl}. The proof of Theorem~\ref{bubbling_thm} is a short averaging argument. The key point is the equidistribution phenomenon expressed in Lemma~\ref{equidistribution_circles_lmm}.\\

Let $\Sigma$ be a closed hyperbolic surface and $\pi: \wt{\Sigma} \lra \Sigma$ be its universal cover by the hyperbolic plane. For $p \in \wt{\Sigma}$ and $r \in (0,\infty)$, let $B_r(p)$ be the disk of radius $r$ centered at $p$ and $\gamma_r(p) := \partial B_r(p)$ be the circle of radius $r$ centered at $p$, both taken with respect to the hyperbolic metric.

\begin{lmm}[{\cite[Theorem~2.1]{Esk93}}]\label{equidistribution_circles_lmm}
Let $f$ be a continuous function on $\Sigma$. The average of $f \circ \pi$ over $\gamma_r(p)$ (taken with respect to the hyperbolic length) tends to the average of $f$ over $\Sigma$ (taken with respect to the hyperbolic area) as $r$ tends to infinity.
\end{lmm}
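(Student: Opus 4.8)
The plan is to read this as an equidistribution statement for expanding circles on the closed hyperbolic surface $\Sigma$ and to reduce it to mixing of the geodesic flow. Realize $\Sigma = \Gamma \backslash \mathbb{H}^2$ for a torsion-free cocompact lattice $\Gamma \subset G := \mathrm{PSL}_2(\mathbb{R})$, write $\mathbb{H}^2 = G/K$ with $K := \mathrm{PSO}(2)$ the stabilizer of a basepoint $o$, and let $a_r \in G$ translate $o$ by hyperbolic distance $r$ along a fixed geodesic ray. Choosing $g_p \in G$ with $g_p \cdot o = p$, the rotations about $p$ form the subgroup $g_p K g_p^{-1}$, so $\gamma_r(p) = \{\, g_p k a_r \cdot o : k \in K \,\}$ is a single orbit traversed at constant speed as $k$ runs over $K$ with its probability Haar measure. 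Hence, writing $\phi \in C(\Gamma\backslash G)$ for the right-$K$-invariant lift defined by $\phi(\Gamma g) := f(\pi(g \cdot o))$, the hyperbolic-length average of $f \circ \pi$ over $\gamma_r(p)$ equals
\[ \int_K \phi(\Gamma g_p k a_r)\, dk, \]
i.e.\ the average of $\phi$, pushed forward by the geodesic flow for time $r$, over the compact $K$-orbit in $\Gamma\backslash G$ lying above $\pi(p)$.

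Next I would show this converges to $\tfrac{1}{\mathrm{vol}(\Gamma\backslash G)}\int_{\Gamma\backslash G}\phi$, which equals $\tfrac{1}{\mathrm{area}(\Sigma)}\int_\Sigma f$ because $\phi$ is $K$-invariant. The engine is mixing of the geodesic flow on $\Gamma\backslash G$ --- equivalently, vanishing at infinity of the matrix coefficients of the $G$-representation on $L^2_0(\Gamma\backslash G)$, which holds by the Howe--Moore theorem since $G$ is simple and the constants are the only invariant vectors.

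The only subtle point --- and the main obstacle --- is that one is averaging $\phi \circ a_r$ over a one-dimensional $K$-orbit rather than over an open set, so mixing does not apply directly: the orbit must be thickened. Thickening by a fixed neighborhood of $e$ in $G$ fails, because conjugating a small neighborhood by $a_r$ blows it up along the unstable direction. The remedy is the wavefront lemma of \cite{Esk93}: one thickens the orbit in the stable horospherical direction, which is contracted by $a_r$ and whose flow-images stay bounded, obtaining an inclusion of the form $g_p K a_r \mathcal{O}_\varepsilon \supseteq g_p(K\mathcal{O}'_\varepsilon)a_r$ uniformly in $r$; up to an error controlled by the uniform continuity of $\phi$ on the compact space $\Gamma\backslash G$, the $K$-orbit average then equals the integral of $\phi \circ a_r$ against a fixed continuous weight on $\Gamma\backslash G$, and the latter converges by mixing. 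Letting $\varepsilon \to 0$ after $r \to \infty$ gives the limit, which is \cite[Theorem~2.1]{Esk93}. Since $\Sigma$ is a surface, an elementary alternative is available: decompose $f = \bar f + \sum_{\lambda > 0} c_\lambda \phi_\lambda$ into Laplace eigenfunctions ($\Sigma$ compact, so the spectrum is discrete, $\bar f$ the mean), note that the circular-average operator on $\mathbb{H}^2$ is a radial convolution operator and hence scales a $\lambda$-eigenfunction by the spherical function $\varphi_\lambda(r)$, which satisfies $|\varphi_\lambda| \le 1$ and $\varphi_\lambda(r) \to 0$ as $r \to \infty$ for every $\lambda > 0$ while $\varphi_0 \equiv 1$; the average over $\gamma_r(p)$ is then $\bar f + \sum_{\lambda > 0} c_\lambda \varphi_\lambda(r)\phi_\lambda(\pi(p))$, and dominated convergence --- first for smooth $f$, then for continuous $f$ by uniform approximation --- gives the limit $\bar f$.
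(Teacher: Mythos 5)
Your proposal is correct, but note that the paper does not prove this lemma at all: it is quoted verbatim from Eskin--McMullen, and the citation \cite[Theorem~2.1]{Esk93} is the entire ``proof.'' Your first argument --- lifting $f$ to a right-$K$-invariant function $\phi$ on $\Gamma\backslash G$, writing the length-average over $\gamma_r(p)$ as $\int_K \phi(\Gamma g_p k a_r)\,dk$, and converting Howe--Moore mixing into equidistribution of the translated $K$-orbit via the wavefront lemma --- is essentially a reconstruction of the proof in that reference, so it buys nothing new beyond making the outsourced argument explicit; the one point to state carefully is that the thickening transversal to $K$ must be taken in the directions whose conjugates $a_r^{-1}ha_r$ stay bounded (the $A$-direction together with the horospherical subgroup contracted by this conjugation), which is exactly what the wavefront lemma packages. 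Your second, spectral argument is a genuinely different and more elementary route, available precisely because $\Sigma$ is a closed hyperbolic surface: the circular average is a radial convolution, hence acts on a $\lambda$-eigenfunction by the spherical function $\varphi_\lambda(r)$, and the limit follows from $|\varphi_\lambda|\le 1$, $\varphi_\lambda(r)\to 0$ for $\lambda>0$ (for the small eigenvalues $0<\lambda<1/4$, i.e.\ $\lambda=s(1-s)$ with $s\in(\tfrac12,1)$, one has $\varphi_s(r)\asymp e^{(s-1)r}\to 0$, so these cause no trouble), absolute-uniform convergence of the eigenfunction expansion for smooth $f$, and uniform approximation for continuous $f$. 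The mixing/wavefront route generalizes to translates of symmetric subgroup orbits in arbitrary homogeneous spaces, which is the level of generality of \cite{Esk93}; your spectral route is self-contained at the level of this lemma and in fact yields the convergence uniformly in the center $p$, which the statement does not require but which is a pleasant strengthening.
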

Lemma~\ref{equidistribution_circles_lmm} immediately implies the following via integrating in polar coordinates (equivalently, radially decomposing the area measure).

\begin{crl}\label{equidistribution_disks_crl}
The average of $f \circ \pi$ over $B_r(p)$ (taken with respect to the hyperbolic area) tends to the average of $f$ over $\Sigma$ (taken with respect to the hyperbolic area) as $r$ tends to infinity.
\end{crl}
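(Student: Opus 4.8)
The plan is to deduce the disk statement from Lemma~\ref{equidistribution_circles_lmm} by writing the area integral over $B_r(p)$ as an iterated integral in the radial variable, and then invoking the elementary fact that a weighted average of a family converging to a limit, with total weight tending to infinity, again converges to that limit.

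First I would set up geodesic polar coordinates about $p$. The distance function $d_p$ on $\wt{\Sigma}$ is $1$-Lipschitz with $|\nabla d_p|\equiv 1$ away from $p$, so the coarea formula (equivalently, writing $dA = \sinh t\,dt\,d\theta$ and $\textnormal{length}(\gamma_t(p)) = 2\pi\sinh t$ on the hyperbolic plane) gives, for every $r>0$,
$$\textnormal{area}\big(B_r(p)\big) = \int_0^r \textnormal{length}\big(\gamma_t(p)\big)\,dt \qquad \textnormal{and} \qquad \int_{B_r(p)} (f\circ\pi)\,dA = \int_0^r\Big(\int_{\gamma_t(p)} (f\circ\pi)\,ds\Big)dt.$$
Write $\bar f$ for the average of $f$ over $\Sigma$ and set $g(t) := \textnormal{length}(\gamma_t(p))^{-1}\int_{\gamma_t(p)} (f\circ\pi)\,ds$, the average of $f\circ\pi$ over $\gamma_t(p)$. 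Lemma~\ref{equidistribution_circles_lmm} says exactly that $g(t)\to\bar f$ as $t\to\infty$, and the two identities above show that the average of $f\circ\pi$ over $B_r(p)$ equals the weighted average
$$\frac{\int_0^r g(t)\,\textnormal{length}(\gamma_t(p))\,dt}{\int_0^r \textnormal{length}(\gamma_t(p))\,dt}.$$

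To finish, given $\ve>0$ I would pick $t_0$ with $|g(t)-\bar f|<\ve$ for all $t\geq t_0$, subtract $\bar f$, and split the numerator integral at $t_0$. The tail contributes at most $\ve$ in absolute value since the weights $\textnormal{length}(\gamma_t(p))\,dt$ are nonnegative, while the head is bounded by $2\|f\|_\infty\,\textnormal{area}(B_{t_0}(p))$ — a constant independent of $r$, using that $\|f\|_\infty<\infty$ because $f$ is continuous on the compact surface $\Sigma$ — divided by $\textnormal{area}(B_r(p))$. Since $\textnormal{area}(B_r(p))\to\infty$ as $r\to\infty$ (indeed $\textnormal{length}(\gamma_t(p))$ grows like $e^t$), the head term tends to $0$, so $\limsup_{r\to\infty}$ of the error is at most $\ve$; letting $\ve\to 0$ gives the claim.

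There is no real obstacle here: the entire geometric content lies in Lemma~\ref{equidistribution_circles_lmm}, and the argument above is the routine "upgrade from spheres to balls." The only point worth stating explicitly is the divergence of $\textnormal{area}(B_r(p))$, which is what makes the contribution of small radii negligible in the normalization.
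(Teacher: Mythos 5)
Your argument is correct and is exactly the paper's route: the paper deduces the corollary from Lemma~\ref{equidistribution_circles_lmm} by "integrating in polar coordinates (equivalently, radially decomposing the area measure)," which is precisely your weighted-average-of-circle-averages computation. Your write-up simply supplies the routine $\varepsilon$-splitting details, including the one point genuinely needed, namely that $\textnormal{area}\big(B_r(p)\big)\to\infty$.
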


\begin{prp}\label{averaging_prp}
Let $\Sigma$ be a closed Riemann surface of genus $G > 1$, equipped with a Hermitian metric $g$. Then, there is a sequence of holomorphic maps $u_i: D \lra \Sigma$ such that
\begin{equation}\label{averaging_inequality}
\lim_{i \rightarrow \infty} \frac{\textnormal{length}\big(u_i(\partial D)\big)}{\textnormal{area}\big(u_i(D)\big)} \leq \sqrt{\frac{4\pi(G-1)}{\textnormal{area}(\Sigma)}}
\end{equation}
and $u_i(D)/\textnormal{area}(u_i(D))$ tends to $\Sigma/\textnormal{area}(\Sigma)$ weakly as a current.
\end{prp}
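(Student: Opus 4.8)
The plan is to realize the disks $u_i$ as restrictions of the universal covering map of $\Sigma$ to hyperbolic balls of radius tending to infinity, and to extract both conclusions from the equidistribution statements above. The Hermitian metric $g$ determines a complex structure on $\Sigma$, and since $G>1$ uniformization gives a unique metric $g_0$ in its conformal class with constant curvature $-1$; write $g=\rho^2 g_0$, where $\rho$ is a positive continuous function on $\Sigma$ (smooth if $g$ is). By Gauss--Bonnet, $\textnormal{area}_{g_0}(\Sigma)=2\pi|\chi(\Sigma)|=4\pi(G-1)$. Let $\pi:\wt\Sigma\lra\Sigma$ be the universal covering map, which is holomorphic and a local isometry for the hyperbolic metrics, and fix $p\in\wt\Sigma$ as in the setup of Lemma~\ref{equidistribution_circles_lmm}. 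For each $r>0$ the hyperbolic ball $B_r(p)\subset\wt\Sigma$ is biholomorphic to the unit disk $D$, so precomposing $\pi|_{B_r(p)}$ with a biholomorphism $D\lra B_r(p)$ produces a holomorphic map $u_r:D\lra\Sigma$. Length of $\partial D$ and area of $D$ are computed in the pullback metric and are therefore unchanged by this reparameterization, so $\textnormal{length}(u_r(\partial D))=\int_{\gamma_r(p)}(\rho\circ\pi)\,ds_{g_0}$ and $\textnormal{area}(u_r(D))=\int_{B_r(p)}(\rho\circ\pi)^2\,dA_{g_0}$, where $ds_{g_0},dA_{g_0}$ denote the hyperbolic length and area elements on $\wt\Sigma$ (pullbacks under $\pi$ of those on $\Sigma$).

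Next I would factor the ratio of interest as
$$\frac{\textnormal{length}(u_r(\partial D))}{\textnormal{area}(u_r(D))}=\frac{\textnormal{length}_{g_0}(\gamma_r(p))}{\textnormal{area}_{g_0}(B_r(p))}\cdot\frac{\avg_{\gamma_r(p)}(\rho\circ\pi)}{\avg_{B_r(p)}\big((\rho\circ\pi)^2\big)},$$
where $\avg_{\gamma_r(p)}$ and $\avg_{B_r(p)}$ denote averages with respect to hyperbolic length and hyperbolic area respectively. The first factor equals $\dfrac{2\pi\sinh r}{2\pi(\cosh r-1)}=\coth(r/2)$, which tends to $1$ as $r\to\infty$. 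For the second factor, Lemma~\ref{equidistribution_circles_lmm} applied to $f=\rho$ gives $\avg_{\gamma_r(p)}(\rho\circ\pi)\to\overline{\rho}:=\tfrac1{\textnormal{area}_{g_0}(\Sigma)}\int_\Sigma\rho\,dA_{g_0}$, and Corollary~\ref{equidistribution_disks_crl} applied to $f=\rho^2$ gives $\avg_{B_r(p)}\big((\rho\circ\pi)^2\big)\to\overline{\rho^2}:=\tfrac1{\textnormal{area}_{g_0}(\Sigma)}\int_\Sigma\rho^2\,dA_{g_0}=\textnormal{area}_g(\Sigma)/\big(4\pi(G-1)\big)$. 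Taking $u_i:=u_{r_i}$ for any sequence $r_i\to\infty$, we get $\lim_i\textnormal{length}(u_i(\partial D))/\textnormal{area}(u_i(D))=\overline{\rho}/\overline{\rho^2}$, and by Cauchy--Schwarz (equivalently Jensen) $\overline{\rho}\le\sqrt{\overline{\rho^2}}$, so this limit is at most $1/\sqrt{\overline{\rho^2}}=\sqrt{4\pi(G-1)/\textnormal{area}(\Sigma)}$, which is (\ref{averaging_inequality}).

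For the weak convergence of currents I would test against an arbitrary smooth $2$-form $\beta$ on $\Sigma$. Writing $\beta=\phi\,dA_{g_0}$ with $\phi$ continuous, and noting that $u_r(D)$ as a current is $(u_r)_*[D]=\pi_*[B_r(p)]$, so that $\int_{u_r(D)}\beta=\int_{B_r(p)}(\phi\circ\pi)\,dA_{g_0}$, we have
$$\frac{1}{\textnormal{area}(u_r(D))}\int_{u_r(D)}\beta=\frac{\avg_{B_r(p)}(\phi\circ\pi)}{\avg_{B_r(p)}\big((\rho\circ\pi)^2\big)}\ \lra\ \frac{\tfrac1{\textnormal{area}_{g_0}(\Sigma)}\int_\Sigma\phi\,dA_{g_0}}{\tfrac1{\textnormal{area}_{g_0}(\Sigma)}\int_\Sigma\rho^2\,dA_{g_0}}=\frac{\int_\Sigma\beta}{\textnormal{area}(\Sigma)}$$
as $r\to\infty$, again by Corollary~\ref{equidistribution_disks_crl} (applied to $\phi$ and to $\rho^2$). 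The right-hand side is exactly the pairing of $\Sigma/\textnormal{area}(\Sigma)$ with $\beta$, which gives the asserted weak convergence.

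The argument is essentially forced, so no serious obstacle is expected; the points needing care are: (i) that $\rho$ is continuous, so that Lemma~\ref{equidistribution_circles_lmm} and Corollary~\ref{equidistribution_disks_crl} apply (immediate, as $\rho^2$ is the continuous positive ratio of the two area densities); (ii) factoring the length-to-area ratio as above so that the ``model'' ratio $\coth(r/2)$ and the equidistributing averages are handled separately; and (iii) observing that (\ref{averaging_inequality}) is genuinely an inequality, the loss being precisely the Cauchy--Schwarz gap between $\overline{\rho}$ and $\sqrt{\overline{\rho^2}}$ (with equality iff $g$ is hyperbolic up to scale).
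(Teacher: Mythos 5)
Your proof is correct and follows essentially the same route as the paper's: restrict the universal covering map to hyperbolic balls of radius tending to infinity, use the equidistribution of circles and disks together with Jensen/Cauchy--Schwarz and Gauss--Bonnet to get the length--area bound, and use disk equidistribution again for the weak convergence. The only cosmetic differences are that you compute the model ratio $\coth(r/2)$ explicitly where the paper cites a standard fact, and you spell out the weak-convergence pairing against test forms, which the paper leaves implicit.
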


\begin{proof}
Let $h$ be the hyperbolic metric of constant curvature $-1$ on $\Sigma$ given by the uniformization theorem \cite[Theorem~XII.0.1]{deS16} and $\lambda: \Sigma \lra (0,\infty)$ be the smooth function such that $g = \lambda^2 h$. Let $\pi: \wt{\Sigma} \lra \Sigma$ be the universal cover by the hyperbolic plane, as above, and $p \in \wt{\Sigma}$. We will show that the sequence
$$u_i := \pi: B_i(p) \lra \Sigma$$
satisfies the claimed properties.\\

Let $\ell_g$ and $\ell_h$ be lengths taken with respect to $g$ and $h$ (or their pullbacks to $\wt{\Sigma}$), respectively. Let $A_g$ and $A_h$ be the areas taken with respect to $g$ and $h$ (or their pullbacks to $\wt{\Sigma}$), respectively. For a function $f: X \lra \R$, let $\avg(f,\Sigma)$, $\avg(f,B_r)$, and $\avg(f,\gamma_r)$ be averages of $f$ (with respect to the relevant hyperbolic measure) over $\Sigma$, $B_r$, and $\gamma_r$, respectively. By \cite[Section~13, Fact~5]{Can97},
$$\lim_{i \rightarrow \infty} \frac{\ell_h\big(\gamma_i(p)\big)}{A_h\big(B_i(p)\big)} = 1.$$
Therefore, by Lemma~\ref{equidistribution_circles_lmm}, Corollary~\ref{equidistribution_disks_crl}, and Jensen's (or H{ö}lder's) inequality,
\begin{equation*}
\begin{split}
\lim_{i \rightarrow \infty} \frac{\ell_g\big(u_i(\partial D)\big)}{A_g\big(u_i(D)\big)}
= &\lim_{i \rightarrow \infty} \frac{\ell_g\big(\gamma_i(p)\big)}{A_g\big(B_i(p)\big)}
= \lim_{i \rightarrow \infty} \frac{\avg\big(\lambda,\gamma_i(p)\big) \ell_h\big(\gamma_i(p)\big)}{\avg\big(\lambda^2,B_i(p)\big) A_h\big(B_i(p)\big)}\\
= &\frac{\avg(\lambda, \Sigma)}{\avg(\lambda^2,\Sigma)}
\leq  \frac{1}{\sqrt{\avg(\lambda^2,\Sigma)}}
= \sqrt{\frac{A_h(\Sigma)}{A_g(\Sigma)}}
= \sqrt{\frac{4\pi(G-1)}{A_g(\Sigma)}}\, ;\\
\end{split}
\end{equation*}
the last equality follows from the Gauss-Bonnet theorem~\cite[Chapter~4-5, Global Gauss-Bonnet Theorem]{doC76}. Therefore, (\ref{averaging_inequality}) holds. The last claim in Proposition~\ref{averaging_prp} follows immediately from Corollary~\ref{equidistribution_disks_crl}.
\end{proof}
\begin{rmk}\label{bubbling_discrete_zeroes_rmk}
Similarly to Lemma~\ref{isoperimetry_from_coarse_isoperimetry_lmm} (cf. Remark~\ref{discrete_zeros_rmk}), Proposition~\ref{averaging_prp} applies also to the case in which the Riemannian metric $g$ has discrete zeroes and therefore to the metric induced by a nonconstant pseudoholomorphic map.
\end{rmk}

\begin{proof}[{\bf{\emph{Proof of Theorem~\ref{bubbling_thm}}}}]
By \cite[Lemma~26.14]{Sim83}, a subsequence of $\Sigma_i/\textnormal{area}(\Sigma_i)$ converges to a current $T$. By Proposition~\ref{averaging_prp}, each $\Sigma_i/\textnormal{area}(\Sigma_i)$ is a limit of $D_{ij}/\textnormal{area}(D_{ij})$ with $$\lim_{i \rightarrow \infty} \lim_{j \rightarrow \infty} \frac{\textnormal{length}(\partial D_{ij})}{\textnormal{area}(D_{ij})}=0.$$
As a limit of limits is a limit in the weak topology on currents of finite mass \cite[Corollary~7.3]{Fed60}, $T$ is an Ahlfors current.
\end{proof}

\begin{proof}[{\bf{\emph{Proof of Corollary~\ref{GW_crl}}}}]
Suppose $\Sigma_i$ is a sequence of closed pseudoholomorphic curves with fixed genus and homology class, but unbounded area. By Theorem~\ref{bubbling_thm}, a subsequence of $\Sigma_i/\textnormal{area}(\Sigma_i)$ converges to an null-homologous Ahlfors current. Therefore, if no Ahlfors current is null-homologous, there is a uniform upper bound on the area of a pseudoholomorphic curve depending only on its genus and homology class. By Gromov's compactness theorem~\cite{Gro85}, the moduli spaces $\ov{\mathfrak{M}}_g(A;J)$ are compact. The Gromov-Witten type invariants can then be defined using virtual techniques as in~\cite{Li98,Par16}.
\end{proof}

The next lemma follows immediately from Stokes' Theorem and the fact that disks lift to universal covers.

\begin{lmm}\label{hyperbolic_form_lmm}
Let $(X,J)$ be a closed almost complex manifold and $T$ be an Ahlfors current on $X$. Then, $T(\alpha) = 0$ for every hyperbolic 2-form $\alpha$.
\end{lmm}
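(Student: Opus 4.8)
The plan is to unwind the two definitions involved and reduce everything to Stokes' theorem together with the defining property of an Ahlfors current. By Definition~\ref{Ahlfors_current_dfn}, $T$ is a weak limit $T = \lim_i D_i/\textnormal{area}(D_i)$, where $u_i : D \lra X$ are $J$-holomorphic disks, $D_i = (u_i)_*[D]$ the associated currents, and $\textnormal{length}(\partial D_i)/\textnormal{area}(D_i) \to 0$. Since a hyperbolic form is in particular smooth and closed, weak convergence applies to it, so $T(\alpha) = \lim_i \textnormal{area}(D_i)^{-1}\int_{D_i}\alpha$. Hence it suffices to bound $\bigl|\int_{D_i}\alpha\bigr|$ by a fixed constant times $\textnormal{length}(\partial D_i)$, uniformly in $i$; dividing by $\textnormal{area}(D_i)$ and passing to the limit then forces $T(\alpha)=0$.

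To get that bound I would use the primitive furnished by Definition~\ref{hyperbolic_form_dfn}: there is a $1$-form $\wt\beta$ on the universal cover $\pi : \wt X \lra X$ with $d\wt\beta = \pi^*\alpha$ and $|\wt\beta| \le C$ measured in a metric $\pi^*g$ pulled back from $X$. Because the disk $D$ is simply connected, each $u_i$ lifts through $\pi$ to $\wt u_i : D \lra \wt X$ with $\pi\circ\wt u_i = u_i$. Then $u_i^*\alpha = \wt u_i^*\pi^*\alpha = \wt u_i^*(d\wt\beta) = d(\wt u_i^*\wt\beta)$, so Stokes' theorem yields $\int_{D_i}\alpha = \int_D u_i^*\alpha = \int_{\partial D}\wt u_i^*\wt\beta$. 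As $\pi$ is a local isometry between $(\wt X,\pi^*g)$ and $(X,g)$, the pointwise bound $|\wt\beta|\le C$ transfers to $|\wt u_i^*\wt\beta|\le C$ measured in the pullback metric $u_i^*g$, so $\bigl|\int_{\partial D}\wt u_i^*\wt\beta\bigr| \le C\,\textnormal{length}(\partial D_i)$. Combining the displays gives $|T(\alpha)| \le \lim_i C\,\textnormal{length}(\partial D_i)/\textnormal{area}(D_i) = 0$.

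I do not expect any genuine obstacle here; the argument is exactly the ``follows immediately'' remark preceding the statement. The only points requiring care are bookkeeping: that the length and area in Definition~\ref{Ahlfors_current_dfn} are those of the pullback metric, which is what makes the local-isometry transfer of the bound on $\wt\beta$ literally correct, and that one is allowed to evaluate the limiting current on $\alpha$ because $\alpha$ is smooth. Both are routine, so the write-up will be a few lines.
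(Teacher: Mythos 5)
Your argument is correct and is exactly the paper's intended proof: the paper proves this lemma by the one-line observation that it ``follows immediately from Stokes' Theorem and the fact that disks lift to universal covers,'' which is precisely your lift--primitive--Stokes--length bound computation. No issues.
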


\begin{proof}[{\bf{\emph{Proof of Corollary~\ref{partial_hyperbolicity_crl}}}}]
Suppose there exists $\varepsilon > 0$ such that for each $i \in \Z^+$, there exists $\Sigma_i$ such that $\int_{\Sigma_i} \alpha > \varepsilon \int_{\Sigma_i} \omega$ and $\int_{\Sigma_i} \omega \geq i\cdot \textnormal{genus}(\Sigma_i)$. Theorem~\ref{bubbling_thm} then yields an Ahlfors current $T$ such that $T(\alpha) \neq 0$, contradicting Lemma~\ref{hyperbolic_form_lmm}.
\end{proof}

\section{Flexibility of disks}\label{convexity_sec}
This section contains the proofs of Theorem~\ref{convexity_thm} and Example~\ref{extreme_points_eg}. The proof of the former consists of a gluing argument (Proposition~\ref{reduction_to_surface_case_prp}) followed by a direct construction. The key point is that we are free to add a local perturbation to achieve regularity. A similar idea was exploited to great effect in \cite{Suk12}. We take a different approach which works for any bordered Riemann surface and uses the setup standard in symplectic geometry.\\

Let $(X,J)$ be an almost complex manifold. 
We call a half-dimensional submanifold $Y\!\subset\!X$ \textit{totally real} 
if \hbox{$TY\!\cap\!J(TY)\!=\!Y$}. 
For a compact, connected Riemann surface $\Sigma$, possibly nodal and with boundary, we denote by $\Sigma^* \subset \Sigma - \partial \Sigma$ the subspace of smooth points. For a smooth map $u: (\Sigma,\partial \Sigma) \lra (X,Y)$, define
\begin{equation*}
\begin{split}
\Gamma(u;Y)\!&:=\!\big\{\xi\!\in\!\Gamma(\Sigma;u^*TX):\!
\xi|_{\partial\Sigma}\!\in\!\Gamma\big(\partial\Sigma;\{\partial u\}^*TY\big)\big\}
\quad \textnormal{and}\\
\Gamma^{0,1}_J(u)\!&:=\!\Gamma\big(\Sigma;(T^*\Sigma)^{0,1}\!\otimes_{\C}\!u^*TX\big).
\end{split}
\end{equation*}
If $u$ is $J$-holomorphic, we denote by
\begin{equation*}
D_{Y;J;u}\!:\Gamma(u;Y) \lra \Gamma^{0,1}_J(u)
\end{equation*}
the linearization of the $\overline{\partial}_J$-operator on the space of smooth maps 
from~$(\Sigma,\partial\Sigma)$ to~$(X,Y)$ at $u$, as in \cite[Section~2]{Iv98}. 

\begin{lmm}[{\cite[Equation~2.2.1]{Iv98}}]\label{split_str_lmm}
The operator $D_{Y;J;u}$ is a real Cauchy-Riemann operator. The $J$-antilinear part of $D_{Y;J;u}$ has the form
$$\bigl\{R(\xi)\bigr\}(v) = N(\xi,\textnormal{d}u(v)) \quad \forall \xi \in \Gamma(u;Y), v \in T\Sigma,$$
where $N$ is the Nijenhuis tensor of $J$ (up to a factor depending on the normalization of the latter).
\end{lmm}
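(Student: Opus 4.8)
The plan is to obtain the formula by a direct computation of the linearization of $\overline{\partial}_J$ and then to project onto the $J$-antilinear part. First I would fix a torsion-free connection $\nabla$ on $TX$ — the Levi-Civita connection of the Hermitian metric $g$ works — and differentiate $t \mapsto \overline{\partial}_J u_t$ at $t = 0$ along a smooth family $u_t: (\Sigma, \partial\Sigma) \to (X, Y)$ with $u_0 = u$ and $\partial_t|_{t=0} u_t = \xi$; because $u_t(\partial\Sigma) \subset Y$, the variation field $\xi$ automatically lies in $\Gamma(u;Y)$, which accounts for the domain of $D_{Y;J;u}$. Writing $\overline{\partial}_J u = \tfrac12(du + J(u)\circ du\circ j)$, a now-standard computation (cf. \cite[Proposition~3.1.1]{McD12}, the only new feature being the totally real boundary condition, which restricts the domain but not the operator) gives, at the $J$-holomorphic map $u$,
\[
D_{Y;J;u}\xi \;=\; \tfrac12\big(\nabla\xi + J(u)\,\nabla\xi\circ j\big) \;-\; \tfrac12\, J(u)\,(\nabla_\xi J)(u)\circ du .
\]
The first summand satisfies the Leibniz rule $D(f\xi) = (\overline{\partial} f)\otimes\xi + f\,D\xi$ for real-valued $f$, and the second is a zeroth-order bundle map; hence $D_{Y;J;u}$ is a real Cauchy–Riemann operator, which is the first assertion.

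For the second assertion I would extract the $J$-antilinear part by the projection $R(\xi) := \tfrac12\big(D_{Y;J;u}\xi + J\circ D_{Y;J;u}(J\xi)\big)$, which is automatically $\C$-antilinear and of zeroth order. Using $\nabla_v(J\xi) = (\nabla_v J)\xi + J\nabla_v\xi$ one sees that the $\nabla$-Cauchy–Riemann summand is itself not $\C$-linear (since $\nabla J \neq 0$ in general) and contributes a term in $\nabla_\xi J$ that combines with the explicit zeroth-order term. Since $u$ is $J$-holomorphic, $du\circ j = J\circ du$, so every occurrence of $du(jv)$ may be traded for $J\,du(v)$; after this substitution the surviving combination of $\nabla J$-terms is exactly the expression for the Nijenhuis tensor afforded by a torsion-free connection, namely (up to overall sign)
\[
N(v,w) \;=\; (\nabla_{Jv}J)w - (\nabla_{Jw}J)v - J(\nabla_v J)w + J(\nabla_w J)v ,
\]
evaluated at $v = \xi$ and $w = du(\cdot)$. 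This yields $\{R(\xi)\}(v) = c\cdot N(\xi, du(v))$ for a universal constant $c$ determined only by the normalization of $N$. Finally, the identity $N(X, JY) = -J\,N(X,Y)$ confirms independently that $v \mapsto N(\xi, du(v))$ is $\C$-antilinear in $v$ — hence a genuine element of $\Gamma^{0,1}_J(u)$ — and shows that $R$ vanishes identically when $J$ is integrable, as expected.

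The main obstacle is the bookkeeping in the last step: splitting both $\tfrac12(\nabla\xi + J\,\nabla\xi\circ j)$ and $-\tfrac12 J(\nabla_\xi J)\circ du$ into their $\C$-linear and $\C$-antilinear pieces while tracking all factors of $\tfrac12$ and $\tfrac14$ and the sign conventions, and recognizing the residual combination as the Nijenhuis tensor via the torsion-free identity above. An alternative that sidesteps part of this is to use the canonical Hermitian connection instead, for which $\nabla J = 0$ but which has nonzero torsion: then $D_{Y;J;u}$ is visibly a $\C$-linear operator plus the torsion term, and the torsion of that connection is a constant multiple of $N$, giving the formula more directly — at the cost of recalling the structure of the Hermitian connection. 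Either route produces the Nijenhuis tensor, and the ``factor depending on the normalization'' in the statement reflects exactly which convention one adopts.
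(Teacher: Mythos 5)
Your computation is correct: with a torsion-free connection the linearization at a $J$-holomorphic map is $\tfrac12(\nabla\xi + J\nabla\xi\circ j) - \tfrac12 J(\nabla_\xi J)\circ \textnormal{d}u$, its antilinear part $\tfrac12\big(D\xi + JD(J\xi)\big)$ is tensorial, and after trading $\textnormal{d}u(jv)$ for $J\,\textnormal{d}u(v)$ the surviving $\nabla J$-terms assemble into the torsion-free expression for $N$, giving $R(\xi)(v)=\tfrac14 N(\xi,\textnormal{d}u(v))$; you are also right that the totally real boundary condition only restricts the domain and does not affect the local form of the operator. Note that the paper does not prove this lemma at all --- it quotes it from Ivashkovich--Shevchishin (Equation~2.2.1 of the cited reference) --- and your argument is essentially the standard derivation found there and in McDuff--Salamon (Proposition~3.1.1 and the accompanying remark), so there is no methodological divergence to report.
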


The following is proved in \cite{Zin17}, for example. The ability to choose the support follows from the unique continuation property of elements of the kernel of the adjoint operator.

\begin{lmm}[{\cite[Lemma~4.1]{Zin17}}]\label{perturbation_lmm}
Let $U \subset \Sigma^*$ be an open set intersecting every irreducible component of $\Sigma$. Then, there exists a finite-dimensional subspace $S_U \subset \Gamma^{0,1}_J(u)$ such that each section in $S_U$ is supported in $U$ and $\textnormal{Image}(D_{Y;J;u}) + S_U = \Gamma^{0,1}_J(u)$.
\end{lmm}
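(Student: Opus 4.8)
The plan is to run the standard cokernel--perturbation argument, with the support constraint supplied by unique continuation for the adjoint operator. First I would pass to Banach completions --- say the $W^{1,p}$--completion of $\Gamma(u;Y)$ and the $L^p$--completion of $\Gamma^{0,1}_J(u)$, with $p>2$. By Lemma~\ref{split_str_lmm}, $D_{Y;J;u}$ is a real Cauchy--Riemann operator with a totally real boundary condition, so (as in \cite{Iv98}) it extends to a Fredholm operator between these spaces; in particular its image is closed and its cokernel is finite-dimensional. The cokernel is $L^2$--dual to the kernel of the formal $L^2$--adjoint $D^{*}$, and by elliptic regularity $\ker D^{*}$ consists of smooth sections, which pair with smooth sections of $\Gamma^{0,1}_J(u)$ via the $L^2$ inner product. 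Restricting attention to smooth sections, a finite-dimensional subspace $S\subset\Gamma^{0,1}_J(u)$ satisfies $\textnormal{Image}(D_{Y;J;u})+S=\Gamma^{0,1}_J(u)$ if and only if the composite $S\hookrightarrow\Gamma^{0,1}_J(u)\lra\textnormal{coker}(D_{Y;J;u})$ is onto, equivalently: every $\eta\in\ker D^{*}$ that is $L^2$--orthogonal to all of $S$ vanishes.

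The key input is unique continuation. The adjoint $D^{*}$ is again a first-order elliptic operator of Cauchy--Riemann type (with a zeroth-order term coming from the Nijenhuis tensor), so on each irreducible component $\Sigma_a$ of $\Sigma$ it has the strong unique continuation property: any $\eta\in\ker D^{*}$ vanishing on a nonempty open subset of $\Sigma_a^{*}$ vanishes on all of $\Sigma_a$. Since $U$ meets every irreducible component, this shows that the restriction map $\ker D^{*}\lra L^2(U)$, $\eta\mapsto\eta|_U$, is injective. This is exactly where the hypothesis on $U$ is used: without it an $\eta$ supported away from $U$ could be nonzero.

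Now I would build $S_U$. Fix a basis $\eta_1,\dots,\eta_k$ of $\ker D^{*}$. For a smooth section $s$ of $(T^*\Sigma)^{0,1}\otimes_{\C}u^*TX$ compactly supported in $U$, let $\ell_s\in(\ker D^{*})^{*}$ be the functional $\eta\mapsto\int_U\langle s,\eta\rangle$. By injectivity of restriction to $U$ and nondegeneracy of the $L^2$ pairing on $U$, the functionals $\ell_s$ span $(\ker D^{*})^{*}$: otherwise some nonzero $\eta\in\ker D^{*}$ would annihilate all of them, forcing $\eta|_U=0$. Hence there are finitely many sections $s_1,\dots,s_k$ supported in $U$ with $\det\big(\int_U\langle s_i,\eta_j\rangle\big)_{i,j}\neq0$, i.e.\ $\ell_{s_1},\dots,\ell_{s_k}$ a basis of $(\ker D^{*})^{*}$. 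Set $S_U:=\textnormal{span}_{\R}(s_1,\dots,s_k)\subset\Gamma^{0,1}_J(u)$. Every element of $S_U$ is supported in $U$, and if $\eta\in\ker D^{*}$ is $L^2$--orthogonal to $S_U$ then $\ell_{s_i}(\eta)=0$ for all $i$, so $\eta=0$; by the criterion above, $\textnormal{Image}(D_{Y;J;u})+S_U=\Gamma^{0,1}_J(u)$.

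I expect the main obstacle to be the unique continuation step: one needs the strong unique continuation property for the adjoint Cauchy--Riemann operator (an Aronszajn/Carleman-type theorem for first-order elliptic systems with a bounded zeroth-order term), and one must be careful about the nodal points and the boundary --- it is enough to use interior unique continuation on the smooth locus $\Sigma^{*}$ of each component, which is precisely why $U\subset\Sigma^{*}$ is assumed. A secondary technical point is bookkeeping: checking that elliptic regularity places $\ker D^{*}$ among smooth sections so that the $L^2$ pairing used to identify the cokernel is literally the one paired against compactly supported sections, and that the conclusion, proved at the level of Banach completions, descends to smooth sections (given smooth $\zeta=D_{Y;J;u}\xi+s$ with $\xi\in W^{1,p}$ and $s\in S_U$, the equation $D_{Y;J;u}\xi=\zeta-s$ is smooth, so $\xi$ is smooth by interior and boundary elliptic regularity). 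None of this is new --- it is carried out in \cite[Lemma~4.1]{Zin17} --- so the proposal is to assemble these standard ingredients in the order above.
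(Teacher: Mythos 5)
Your proposal is correct and follows exactly the route the paper indicates: the paper does not prove this lemma itself but cites \cite[Lemma~4.1]{Zin17}, remarking only that the freedom to choose the support comes from unique continuation for elements of the kernel of the adjoint operator, which is precisely the cokernel--pairing argument you spell out (including the correct use of the hypothesis that $U$ meets every irreducible component). No substantive gap; the technical caveats you flag (Fredholm setup in $W^{1,p}$/$L^p$, smoothness of the adjoint kernel, interior unique continuation on each component) are the standard ones handled in the cited reference.
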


\begin{prp}\label{reduction_to_surface_case_prp}
Let $(X,J)$ be a connected almost complex manifold and $u_1, u_2: D \lra X$ be $J$-holomorphic maps from the unit disk. Let $U_1, U_2 \subset D$ be open subsets. Then, there are sequences of connected Riemann surfaces $\Sigma_i$, embeddings
$$\iota_{i}: (D-U_1) \sqcup (D-U_2) \lra \Sigma_i,$$
and $J$-holomorphic maps $w_i: \Sigma_i \lra X$ such that the maps $w_i \circ \iota_i$ converge in the $C^1$ sense to $u_1$ on $D-U_1$ and $u_2$ on $D-U_2$.
\end{prp}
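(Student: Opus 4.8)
The plan is to build $\Sigma_i$ by gluing the two disks $D$ along small half-disk necks near their boundaries and then use a Newton iteration (implicit function theorem) argument to promote the pre-glued approximately-holomorphic map to a genuine $J$-holomorphic map, using Lemma~\ref{perturbation_lmm} to absorb the cokernel. Concretely, I would first fix small boundary arcs $\alpha_1 \subset \partial D$ (with $\alpha_1 \cap \overline{U_1}$ controlled so that $\alpha_1$ avoids $U_1$) and $\alpha_2 \subset \partial D$ avoiding $U_2$, and attach a thin strip (a ``neck'' of the form $[0,1]\times[0,\delta_i]$ with $\delta_i \to 0$) connecting a neighborhood of $\alpha_1$ in the first copy of $D$ to a neighborhood of $\alpha_2$ in the second copy, producing a connected bordered Riemann surface $\Sigma_i$ together with the obvious embeddings $\iota_i$ of $(D-U_1)\sqcup(D-U_2)$ (one should shrink the glued arcs into $U_1\cup U_2$, or rather into complements; the key is that $\iota_i$ restricted to $D-U_1$ and $D-U_2$ is just the inclusion away from the neck). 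On $\Sigma_i$ one has a \emph{pre-glued} map $\widetilde{w}_i$ which equals $u_1$, resp.\ $u_2$, away from the neck and interpolates between $u_1(\alpha_1)$ and $u_2(\alpha_2)$ across the neck --- this requires first joining the two boundary points $u_1(p_1)$ and $u_2(p_2)$ by a short path, which we may do since $X$ is connected, so along the neck $\widetilde w_i$ traces out a thin tube over that path. By taking $\delta_i \to 0$ and the path fixed, $\bar\partial_J \widetilde w_i$ is supported in the neck and has $L^p$-norm (with weights) tending to $0$.

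The second step is the standard gluing/perturbation analysis: the linearized operator $D_{Y;J;\widetilde w_i}$ on $\Sigma_i$ (with totally real boundary condition coming from a tubular neighborhood of the path joining the endpoints --- any totally real $Y$ extending $T_{u_j(\partial D)}X$-data works, and in fact one can even allow $\partial\Sigma_i$ to map without boundary condition and instead impose a small totally real strip only where needed) is, by the gluing estimate for Cauchy-Riemann operators, uniformly (in $i$) right-invertible \emph{modulo} the finite-dimensional obstruction space $S$. Here is where Lemma~\ref{perturbation_lmm} enters: choosing the open set $U$ of that lemma to lie in the part of $\Sigma_i$ identified with, say, the interior of the first disk away from $U_1$ and away from the neck, we obtain a fixed finite-dimensional $S_U \subset \Gamma^{0,1}_J$ with $\mathrm{Image}(D_{Y;J;\widetilde w_i}) + S_U = \Gamma^{0,1}_J$; then $D_{Y;J;\widetilde w_i} \oplus (\text{evaluation into } S_U)$ is uniformly surjective with uniformly bounded right inverse. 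A contraction-mapping/Newton argument then produces a genuine solution $w_i$ of $\bar\partial_J w_i \in S_U$ that is $C^1$-close to $\widetilde w_i$; but since the $S_U$-component of the equation can be made to vanish by the extra finite-dimensional parameter (or: since $\bar\partial_J w_i$ is automatically $0$ once $w_i$ is $J$-holomorphic, and the Newton scheme with the enlarged target lands on a zero of $\bar\partial_J$ because the approximate solution already has small $\bar\partial_J$ and we are free to perturb within $S_U$), we may in fact take $w_i$ genuinely $J$-holomorphic. Either way, $w_i \to \widetilde w_i$ in $C^1$, and $\widetilde w_i \to u_1$ on $D - U_1$, $\to u_2$ on $D-U_2$ in $C^1$ as $i \to \infty$, giving the claim.

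The main obstacle, and the only point requiring real care, is the \emph{uniformity in $i$} of the estimates as the neck degenerates ($\delta_i \to 0$). The right inverse of the linearized operator on a surface with a long/thin neck generically blows up because of the approximate kernel/cokernel concentrated in the neck; the standard fix is to work with Sobolev norms carrying exponential weights along the neck (Donaldson-type or Floer-type weighted norms), for which the neck operator --- essentially $\partial_s + i\partial_t$ on a strip with a small spectral gap --- is uniformly invertible. One must check that the pre-glued map, the totally real boundary data, and the perturbation space $S_U$ (supported away from the neck) are all compatible with these weighted norms and that the nonlinear error term $\bar\partial_J \widetilde w_i$ is small in the weighted norm --- this is true because it is supported in the neck where the weight is largest but the map is nearly constant (a thin tube over a fixed short path), so with $\delta_i$ chosen to go to zero fast enough relative to the weight, the weighted $L^p$-norm still tends to $0$. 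Granting this standard (if technical) package --- which is exactly the content cited from \cite{Suk12} and the references on Cauchy-Riemann gluing --- the contraction mapping theorem applies with constants independent of $i$, and the proposition follows. I would present the weighted-norm setup and the gluing estimate as a black box, citing the relevant literature, and focus the written proof on the construction of $\Sigma_i$, $\iota_i$, $\widetilde w_i$ and the verification that $\bar\partial_J \widetilde w_i \to 0$ and $\widetilde w_i|_{D - U_j} \to u_j$.
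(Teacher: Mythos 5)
Your overall strategy (connect the two disks, then use Lemma~\ref{perturbation_lmm} plus a gluing/Newton argument) is in the right spirit, but as written it has two genuine gaps, and both are exactly the points the paper's proof is engineered to avoid. First, the obstruction space: you solve $\bar\partial_J w_i \in S_U$ and then assert that ``we may in fact take $w_i$ genuinely $J$-holomorphic'' because the finite-dimensional parameter can be made to vanish; this is unjustified (if the obstruction could always be killed there would be no need for $S_U$ at all), and your parenthetical alternative is circular. Worse, you place $U$ inside the interior of the first disk \emph{away from} $U_1$, i.e.\ inside $D-U_1$, which is precisely where the proposition demands a genuinely $J$-holomorphic map $C^1$-close to $u_1$. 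The paper's key maneuver is the opposite: choose $U$ with $U\cap D_j\subset U_j$ (and meeting every component, with $\Sigma-U$ connected), run the Li--Tian gluing to get maps with $\bar\partial w_i\in S_U$, and then simply \emph{delete} $U$, so that on $\Sigma_i:=\Sigma_i'-U$ the maps are honestly $J$-holomorphic while still containing embedded copies of $D-U_1$ and $D-U_2$. Without that excision (or a surjectivity statement with no stabilization at all, which you do not prove uniformly over the degenerating family), your output maps are not $J$-holomorphic where it matters.

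Second, the geometry of your gluing conflicts with the required $C^1$ convergence. You attach the neck along boundary arcs $\alpha_j\subset\partial D$ chosen to \emph{avoid} $U_j$, so $\alpha_j$ lies in the compact set $D-U_j$ on which $C^1$ convergence to $u_j$ must hold. But your pre-glued map is only approximately holomorphic in an integrated sense (its $\bar\partial$-error has pointwise size comparable to the speed of the connecting path, and is small only because the neck is thin), the genuine solution differs from the pre-glued map by a correction whose $C^1$ norm on the neck is of order the path length (so the blanket claim ``$w_i\to\widetilde w_i$ in $C^1$'' is false there), and interior elliptic estimates near $\alpha_j$ degenerate at scale $\delta_i$ because $\alpha_j$ sits within distance $\delta_i$ of the free boundary of the neck. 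Hence $C^1$ convergence up to $\alpha_j$ is not established by your scheme, and there is no mechanism in your write-up to rescue it. The paper sidesteps both problems at once: after a graph trick to make $u_1,u_2$ embedded, it connects a point of $u_1(U_1)$ to a point of $u_2(U_2)$ by a chain of embedded $J$-holomorphic disks from \cite{Suk12}, forming a nodal $J$-holomorphic curve whose nodes (the gluing loci) lie inside $U_1$ and $U_2$; the standard gluing of \cite{Li98} then gives $C^1$ convergence away from the nodes, i.e.\ on all of $D-U_1$ and $D-U_2$, and no ad hoc pre-gluing over a path of positive length is ever needed. If you want to salvage your neck construction, you would at minimum have to attach the neck inside $U_1$ and $U_2$ (e.g.\ by excising small subdisks there), put the support of $S_U$ in a removable region, delete it at the end, and supply the uniform weighted estimates for the nonstandard pre-gluing over a path --- at which point you have essentially rebuilt the paper's argument with extra analytic overhead.
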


\begin{proof}
We first assume that $u_1$ and $u_2$ are embeddings. By \cite[Theorem~1.1]{Suk12}, there is an embedded $J$-holomorphic disk passing through any sufficiently close pair of points. As $X$ is connected, any pair of points is connected by a finite sequence of $J$-holomorphic disks. Take such a sequence for a point $p \in u_1(U_1)$ and a point $q \in u_2(U_2)$. This yields a connected, nodal Riemann surface $\Sigma$ including $D_1 = \textnormal{Dom}(u_1)$ and $D_2 = \textnormal{Dom}(u_2)$, with no nodes in $D_1 - U_1 \subset D_1$ or $D_2 - U_2 \subset D_2$ and a $J$-holomorphic map $u: \Sigma \lra X$ such that $u|_{D_1} = u_1$, $u|_{D_2} = u_2$, and $u$ is an embedding on $\partial \Sigma$. Take a totally real submanifold $Y$ such that $u(\partial \Sigma) \subset Y$. Let $U \subset \Sigma^*$ be an open subset intersecting every irreducible component of $\Sigma$ and such that $U \cap D_1 \subset U_1$, $U \cap D_2 \subset U_2$, and $\Sigma - U$ is connected. Let $S_U \subset \Gamma(u;Y)$ be a family of perturbations as in Lemma~\ref{perturbation_lmm}. Extend $S_u$ $C^1$-smoothly to smooth maps $\Sigma' \lra X$ that are $C^1$-close to $u$ on compact subsets of $\Sigma^*$.\\

By the gluing construction in the proof of \cite[Proposition~3.4]{Li98}, there exist sequences of irreducible Riemann surfaces $\Sigma'_i$ degenerating to $\Sigma$ and of maps $w_i: \Sigma'_i \lra X$ satisfying $\overline{\partial}w_i \in S_U$ which $C^1$ converge to $u$ away from the nodes. The surfaces $\Sigma_i := \Sigma'_i - U$ are connected, the maps $w_i|_{\Sigma_i}$ are $J$-holomorphic, and the maps $w_i|_{D_1-U_1}$ and $w_i|_{D_2-U_2}$ converge to $u_1|_{D_1-U_1}$ and $u_2|_{D_2-U_2}$, respectively, as claimed.\\

For $u_1$ and $u_2$ not necessarily embeddings, we use a graph trick. Consider $X \times \C$ with the product almost complex structure, $J_{prod}$. Let $D_1$ and $D_2$ be disjoint disks in $\C$ and let $D'_j$ be the graph of $u_j$ over $D_j$ for $j = 1,2$. The disks $D'_j$ are then embedded and $J_{prod}$-holomorphic, so the argument above applies.
\end{proof}

\begin{lmm}\label{spectacles_lmm}
Let $\Sigma$ be a connected Riemann surface. Let $D_1, D_2 \subset \Sigma$ be disjoint embedded disks and $t \in [0,1]$. Then, there exists a sequence of holomorphic maps $u_i: D \lra \Sigma$ such that
\begin{equation}\label{spectacles_formula}
\lim_{i \rightarrow \infty} \frac{\textnormal{length}\big(u_i(\partial D)\big)}{\textnormal{area}\big(u_i(D)\big)} \leq
\max_{j \in \{1,2\}} \frac{\textnormal{length}(\partial D_j)}{\textnormal{area}(D_j)}
\end{equation}
and the currents $u_i(D)/\textnormal{area}(u_i(D))$ tend weakly to
$$t(D_1/\textnormal{area}(D_1)) + (1-t)(D_2/\textnormal{area}(D_2)).$$
\end{lmm}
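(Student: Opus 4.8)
The plan is to build the maps $u_i$ by mapping a disk $D$ (a long "spectacle" shape) so that it wraps $a_i$ times around $D_1$ and $b_i$ times around $D_2$, where $a_i/b_i \to t/(1-t)$, connected by a thin neck. Concretely, I would first reduce to a model situation. Fix disjoint closed disks $D_1, D_2 \subset \Sigma$. For a pair of positive integers $(a,b)$, consider the "figure-eight"–type bordered Riemann surface $P_{a,b}$ obtained as follows: take $a$ copies of $D_1$ and $b$ copies of $D_2$, together with a small "connecting sphere with $a+b$ disks removed" (a planar domain), glued along their boundary circles so that the result is a connected, genus-zero bordered surface with a single boundary circle. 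There is an obvious holomorphic map $v_{a,b}: P_{a,b} \lra \Sigma$ restricting to the identity-type inclusion on each copy of $D_1$ and $D_2$ and mapping the connecting region into a fixed small disk $W \subset \Sigma$ joining $D_1$ to $D_2$ (one may need to first pass to $X \times \C$ and use graphs, exactly as in the last paragraph of the proof of Proposition~\ref{reduction_to_surface_case_prp}, to make the various pieces embedded). The key estimate is that
\[
\frac{\textnormal{length}\big(v_{a,b}(\partial P_{a,b})\big)}{\textnormal{area}\big(v_{a,b}(P_{a,b})\big)}
= \frac{\textnormal{length}(\partial D_1) \text{ or } \partial D_2}{\,a\,\textnormal{area}(D_1) + b\,\textnormal{area}(D_2) + O(1)\,} \lra 0
\]
as $a + b \to \infty$ with the single outer boundary having length $O(1)$: only one boundary circle survives, of bounded length, while the area grows linearly in $a+b$. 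Simultaneously, the normalized current $v_{a,b}(P_{a,b})/\textnormal{area}(v_{a,b}(P_{a,b}))$ converges weakly to $\frac{a\,\textnormal{area}(D_1)}{a\,\textnormal{area}(D_1)+b\,\textnormal{area}(D_2)} \cdot \frac{D_1}{\textnormal{area}(D_1)} + \frac{b\,\textnormal{area}(D_2)}{a\,\textnormal{area}(D_1)+b\,\textnormal{area}(D_2)} \cdot \frac{D_2}{\textnormal{area}(D_2)}$, because the connecting region contributes negligible mass. Choosing $a = a_i$, $b = b_i$ with $a_i\,\textnormal{area}(D_1) / (a_i\,\textnormal{area}(D_1) + b_i\,\textnormal{area}(D_2)) \to t$ gives the stated weak limit, and the length-to-area ratio tends to $0$, which is even stronger than the bound $\max_j \textnormal{length}(\partial D_j)/\textnormal{area}(D_j)$ claimed. (If $t = 0$ or $t = 1$ one simply takes $b_i = 1$, resp. $a_i = 1$, fixed, and the bound on the right-hand side of (\ref{spectacles_formula}) is then genuinely needed rather than $0$.)

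The remaining issue is that $P_{a,b}$ is not literally a disk but a genus-zero bordered surface with one boundary circle; however such a surface is a disk only when it is simply connected, and $P_{a,b}$ has many "holes." To produce an honest map from the unit disk $D$, I would invoke Proposition~\ref{reduction_to_surface_case_prp}, or rather its method: one can realize $v_{a,b}$ as (a $C^1$-limit of) the restriction of $J$-holomorphic maps from $D$ after a gluing/perturbation argument. More cleanly, since $\Sigma$ is a complex manifold (the ambient is just a Riemann surface here, with integrable $J$), holomorphic maps $D \to \Sigma$ of arbitrarily high degree onto a fixed disk already exist — e.g. $z \mapsto z^k$ in a local coordinate — so one does not even need the gluing machinery: take $u_i : D \to \Sigma$ to be a branched cover realized in a local chart that spends "$a_i$ sheets over $D_1$, $b_i$ sheets over $D_2$, connected by a bounded-length slit region," which exists by elementary complex analysis (a polynomial or rational map of suitable degree, composed with a biholomorphism). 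The branch points form a discrete set, so by Remark~\ref{bubbling_discrete_zeroes_rmk} / Remark~\ref{discrete_zeros_rmk} all the length and area computations go through for the induced metric.

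The main obstacle is bookkeeping: making the connecting ("neck") region have boundary contribution and area contribution that are both $O(1)$ (independent of $a_i, b_i$), so that it does not pollute either the length-area ratio or the weak limit. Once one fixes a single small coordinate disk $W \subset \Sigma$ containing arcs joining $\partial D_1$ to $\partial D_2$ and routes all the "plumbing" through $W$ at bounded total length, this is automatic. A secondary point is to verify the weak convergence of currents, which follows by testing against a $2$-form $\beta$: the integral splits as $a_i \int_{D_1}\beta + b_i \int_{D_2}\beta + O(1)$ (the neck region maps into the fixed disk $W$ with bounded area), divided by $a_i\,\textnormal{area}(D_1) + b_i\,\textnormal{area}(D_2) + O(1)$, which converges to $t\int_{D_1}\beta/\textnormal{area}(D_1) + (1-t)\int_{D_2}\beta/\textnormal{area}(D_2)$ by the choice of $a_i, b_i$. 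Everything else is routine.
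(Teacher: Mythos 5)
The central claim of your construction---that only one outer boundary circle of bounded length survives, so that the length-to-area ratio tends to $0$, ``even stronger than the bound claimed''---is false, and this is precisely why the lemma asserts only the bound $\max_j \textnormal{length}(\partial D_j)/\textnormal{area}(D_j)$ rather than $0$. The map $v_{a,b}$ you posit does not exist. Indeed, for any holomorphic $u: D \lra \Sigma$ the pushforward current $u_*[D]$ is integration against an integer-valued (nonnegative, since $u$ is holomorphic) multiplicity function $d$, which is constant on each component of $\Sigma \setminus u(\partial D)$, and $\partial\, u_*[D] = u_*[\partial D]$, so the total variation of $d$ is bounded by $\textnormal{length}\big(u(\partial D)\big)$ (measured, as throughout the paper, with multiplicity). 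In your construction $d \approx a$ on $D_1$, while $d$ must be much smaller than $a$ on a region of definite size (otherwise the normalized current could not converge to a combination of $D_1$ and $D_2$); by the coarea formula for BV functions, every superlevel set $\{d \geq k\}$ with $k$ between roughly $a/2$ and $a$ then has perimeter bounded below by a fixed positive constant, forcing $\textnormal{length}\big(u(\partial D)\big) \geq c\, a$. So the boundary length necessarily grows linearly in the wrapping numbers: gluing $a$ copies of $D_1$ and $b$ copies of $D_2$ onto the inner circles of a planar domain is a purely topological operation with no holomorphic realization having a short outer boundary. (Your description is also internally inconsistent: the connecting region is glued along circles that must map onto $\partial D_1$ and $\partial D_2$, so it cannot map into a small disk $W$ disjoint from these curves.)

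Your fallback via ``a polynomial or rational map of suitable degree'' does not repair this. A single rational map has the same degree over every value, so it does not by itself produce the unequal multiplicities $a_i$ over $D_1$ and $b_i$ over $D_2$ needed for an arbitrary $t$; and once unequal multiplicities are arranged, the argument above forces the boundary length to be of order $a_i\,\textnormal{length}(\partial D_1) + b_i\,\textnormal{length}(\partial D_2)$, so the honest accounting yields exactly the mediant bound in (\ref{spectacles_formula}), not $0$. The paper's proof deals with the real difficulty---one cannot wrap a holomorphic map around a simply connected disk at all---by first passing to annuli $A_j \subset D_j$: it glues $m_i$ fundamental domains of $\widetilde{A_1}$, a connecting strip $h$, and $n_i$ fundamental domains of $\widetilde{A_2}$ inside the universal covers to obtain a genuine disk domain, and the ratio of $m_i\,\textnormal{length}(\partial A_1) + n_i\,\textnormal{length}(\partial A_2)$ to $m_i\,\textnormal{area}(A_1) + n_i\,\textnormal{area}(A_2)$ is bounded by the maximum of the two individual ratios, which is where the right-hand side of (\ref{spectacles_formula}) comes from, with $m_i/n_i$ tuned to realize $t$.
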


\begin{proof}
Removing an arbitrarily small disk from the center of a disk $D$ yields an annulus with area and boundary length arbitrarily close to those of $D$. Therefore, we prove the claim holds for annuli $A_1, A_2 \subset \Sigma$, and the statement for disks follows. Let $m_i$ and $n_i$ be two sequences of positive integers, both tending to infinity, such that
\begin{equation}\label{rational_approx_eqn}
\lim_{i \rightarrow \infty} \frac{m_i}{n_i} = \frac{t\cdot \textnormal{area}(A_2)}{(1-t)\cdot \textnormal{area}(A_1)}\, .
\end{equation}

As $\Sigma$ is connected, there is a holomorphic strip $h \subset \Sigma$ such that $S := A_1 \cup h \cup A_2$ is connected. For each $i$, let $u_i: D \lra S$ be a holomorphic map that wraps $m_i$ times around $A_1$, crosses over~$h$, then wraps $n_i$ times around $A_2$; see Figure~\ref{spectacles_figure}. We will show that the sequence $u_i$ has the claimed properties.\\

We first describe the map $u_i$ more precisely. Let $\pi: \wt{A_1} \sqcup \wt{A_2} \lra A_1 \sqcup A_2$ be the universal covering. The strip $h$ can be glued to $\wt{A_1} \sqcup \wt{A_2}$ to yield a connected Riemann surface $H$. The map $\pi$ extends across $h$ as the identity to yield a map $\pi: H \lra X$. Take a connected subset $D_i \subset H$ consisting of $m_i$ fundamental domains of $\wt{A_1}$, the strip $h$, and $n_i$ fundamental domains of $\wt{A_2}$. Assume the fundamental domains and $h$ have all been chosen at the start to have finite perimeter and area. Then, we define $u_i := \pi|_{D_i}$.\\

Up to an additive constant,
\begin{equation*}
\begin{split}
\textnormal{area}(u_i) &=  m_i \textnormal{area}(A_1) + n_i \textnormal{area}(A_2) \quad \textnormal{and}\\ \textnormal{length}(\partial u_i) &=  m_i \textnormal{length}(\partial A_1) + n_i \textnormal{length}(\partial A_2).
\end{split}
\end{equation*}
It follows that (\ref{spectacles_formula}) holds. Furthermore, up to an error which vanishes in the limit,
$$\frac{u_i(D)}{\textnormal{area}(u_i(D))} = \frac{m_i A_1 + n_i A_2}{m_i \textnormal{area}(A_1) + n_i \textnormal{area}(A_2)}\, .$$
By (\ref{rational_approx_eqn}), this tends to $t(A_1/\textnormal{area}(A_1)) + (1-t)(A_2/\textnormal{area}(A_2))$.
\end{proof}

\begin{figure}
\centering

 
\tikzset{
pattern size/.store in=\mcSize, 
pattern size = 5pt,
pattern thickness/.store in=\mcThickness, 
pattern thickness = 0.3pt,
pattern radius/.store in=\mcRadius, 
pattern radius = 1pt}
\makeatletter
\pgfutil@ifundefined{pgf@pattern@name@_0pa1t4syp}{
\pgfdeclarepatternformonly[\mcThickness,\mcSize]{_0pa1t4syp}
{\pgfqpoint{0pt}{0pt}}
{\pgfpoint{\mcSize+\mcThickness}{\mcSize+\mcThickness}}
{\pgfpoint{\mcSize}{\mcSize}}
{
\pgfsetcolor{\tikz@pattern@color}
\pgfsetlinewidth{\mcThickness}
\pgfpathmoveto{\pgfqpoint{0pt}{0pt}}
\pgfpathlineto{\pgfpoint{\mcSize+\mcThickness}{\mcSize+\mcThickness}}
\pgfusepath{stroke}
}}
\makeatother

 
\tikzset{
pattern size/.store in=\mcSize, 
pattern size = 5pt,
pattern thickness/.store in=\mcThickness, 
pattern thickness = 0.3pt,
pattern radius/.store in=\mcRadius, 
pattern radius = 1pt}
\makeatletter
\pgfutil@ifundefined{pgf@pattern@name@_4dhc1fzc0}{
\pgfdeclarepatternformonly[\mcThickness,\mcSize]{_4dhc1fzc0}
{\pgfqpoint{0pt}{0pt}}
{\pgfpoint{\mcSize+\mcThickness}{\mcSize+\mcThickness}}
{\pgfpoint{\mcSize}{\mcSize}}
{
\pgfsetcolor{\tikz@pattern@color}
\pgfsetlinewidth{\mcThickness}
\pgfpathmoveto{\pgfqpoint{0pt}{0pt}}
\pgfpathlineto{\pgfpoint{\mcSize+\mcThickness}{\mcSize+\mcThickness}}
\pgfusepath{stroke}
}}
\makeatother

 
\tikzset{
pattern size/.store in=\mcSize, 
pattern size = 5pt,
pattern thickness/.store in=\mcThickness, 
pattern thickness = 0.3pt,
pattern radius/.store in=\mcRadius, 
pattern radius = 1pt}
\makeatletter
\pgfutil@ifundefined{pgf@pattern@name@_cxri9ypku}{
\pgfdeclarepatternformonly[\mcThickness,\mcSize]{_cxri9ypku}
{\pgfqpoint{0pt}{0pt}}
{\pgfpoint{\mcSize+\mcThickness}{\mcSize+\mcThickness}}
{\pgfpoint{\mcSize}{\mcSize}}
{
\pgfsetcolor{\tikz@pattern@color}
\pgfsetlinewidth{\mcThickness}
\pgfpathmoveto{\pgfqpoint{0pt}{0pt}}
\pgfpathlineto{\pgfpoint{\mcSize+\mcThickness}{\mcSize+\mcThickness}}
\pgfusepath{stroke}
}}
\makeatother

 
\tikzset{
pattern size/.store in=\mcSize, 
pattern size = 5pt,
pattern thickness/.store in=\mcThickness, 
pattern thickness = 0.3pt,
pattern radius/.store in=\mcRadius, 
pattern radius = 1pt}
\makeatletter
\pgfutil@ifundefined{pgf@pattern@name@_59829v9l6}{
\pgfdeclarepatternformonly[\mcThickness,\mcSize]{_59829v9l6}
{\pgfqpoint{0pt}{0pt}}
{\pgfpoint{\mcSize+\mcThickness}{\mcSize+\mcThickness}}
{\pgfpoint{\mcSize}{\mcSize}}
{
\pgfsetcolor{\tikz@pattern@color}
\pgfsetlinewidth{\mcThickness}
\pgfpathmoveto{\pgfqpoint{0pt}{0pt}}
\pgfpathlineto{\pgfpoint{\mcSize+\mcThickness}{\mcSize+\mcThickness}}
\pgfusepath{stroke}
}}
\makeatother

 
\tikzset{
pattern size/.store in=\mcSize, 
pattern size = 5pt,
pattern thickness/.store in=\mcThickness, 
pattern thickness = 0.3pt,
pattern radius/.store in=\mcRadius, 
pattern radius = 1pt}
\makeatletter
\pgfutil@ifundefined{pgf@pattern@name@_v5rl1o6ny}{
\pgfdeclarepatternformonly[\mcThickness,\mcSize]{_v5rl1o6ny}
{\pgfqpoint{0pt}{0pt}}
{\pgfpoint{\mcSize+\mcThickness}{\mcSize+\mcThickness}}
{\pgfpoint{\mcSize}{\mcSize}}
{
\pgfsetcolor{\tikz@pattern@color}
\pgfsetlinewidth{\mcThickness}
\pgfpathmoveto{\pgfqpoint{0pt}{0pt}}
\pgfpathlineto{\pgfpoint{\mcSize+\mcThickness}{\mcSize+\mcThickness}}
\pgfusepath{stroke}
}}
\makeatother

 
\tikzset{
pattern size/.store in=\mcSize, 
pattern size = 5pt,
pattern thickness/.store in=\mcThickness, 
pattern thickness = 0.3pt,
pattern radius/.store in=\mcRadius, 
pattern radius = 1pt}
\makeatletter
\pgfutil@ifundefined{pgf@pattern@name@_tsipkdw51}{
\pgfdeclarepatternformonly[\mcThickness,\mcSize]{_tsipkdw51}
{\pgfqpoint{0pt}{0pt}}
{\pgfpoint{\mcSize+\mcThickness}{\mcSize+\mcThickness}}
{\pgfpoint{\mcSize}{\mcSize}}
{
\pgfsetcolor{\tikz@pattern@color}
\pgfsetlinewidth{\mcThickness}
\pgfpathmoveto{\pgfqpoint{0pt}{0pt}}
\pgfpathlineto{\pgfpoint{\mcSize+\mcThickness}{\mcSize+\mcThickness}}
\pgfusepath{stroke}
}}
\makeatother
\tikzset{every picture/.style={line width=0.75pt}} 

\begin{tikzpicture}[x=0.75pt,y=0.75pt,yscale=-.75,xscale=.75]

\draw   (587.19,91.08) .. controls (581.22,91.1) and (576.37,84.74) .. (576.34,76.88) .. controls (576.31,69.01) and (581.12,62.61) .. (587.09,62.59) .. controls (593.05,62.57) and (597.9,68.92) .. (597.93,76.79) .. controls (597.96,84.66) and (593.15,91.05) .. (587.19,91.08)(587.26,112.67) .. controls (569.38,112.74) and (554.82,96.75) .. (554.75,76.96) .. controls (554.69,57.17) and (569.13,41.07) .. (587.01,41) .. controls (604.89,40.93) and (619.45,56.91) .. (619.52,76.7) .. controls (619.59,96.49) and (605.14,112.59) .. (587.26,112.67) ;
\draw   (587.67,234.41) .. controls (581.71,234.44) and (576.85,228.08) .. (576.83,220.21) .. controls (576.8,212.34) and (581.61,205.95) .. (587.57,205.92) .. controls (593.53,205.9) and (598.39,212.26) .. (598.41,220.12) .. controls (598.44,227.99) and (593.63,234.39) .. (587.67,234.41)(587.75,256) .. controls (569.86,256.07) and (555.31,240.09) .. (555.24,220.3) .. controls (555.17,200.51) and (569.61,184.41) .. (587.49,184.34) .. controls (605.38,184.26) and (619.93,200.25) .. (620,220.04) .. controls (620.07,239.83) and (605.63,255.93) .. (587.75,256) ;
\draw    (576.12,186.29) -- (575.97,76.88) ;
\draw  [pattern=_0pa1t4syp,pattern size=6pt,pattern thickness=0.75pt,pattern radius=0pt, pattern color={rgb, 255:red, 0; green, 0; blue, 0}] (240.9,229.88) -- (240.82,205.99) -- (305.58,205.74) -- (305.66,229.62) -- cycle ;
\draw  [pattern=_4dhc1fzc0,pattern size=6pt,pattern thickness=0.75pt,pattern radius=0pt, pattern color={rgb, 255:red, 0; green, 0; blue, 0}] (176.15,230.14) -- (176.06,206.25) -- (240.82,205.99) -- (240.9,229.88) -- cycle ;
\draw  [pattern=_cxri9ypku,pattern size=6pt,pattern thickness=0.75pt,pattern radius=0pt, pattern color={rgb, 255:red, 0; green, 0; blue, 0}] (283.59,86.38) -- (283.5,62.49) -- (348.26,62.23) -- (348.35,86.12) -- cycle ;
\draw   (305.66,229.62) -- (305.57,205.74) -- (370.33,205.48) -- (370.41,229.36) -- cycle ;
\draw   (154.08,86.89) -- (153.99,63) -- (218.75,62.74) -- (218.83,86.63) -- cycle ;
\draw   (218.83,86.63) -- (218.75,62.75) -- (283.51,62.49) -- (283.59,86.37) -- cycle ;
\draw  [pattern=_59829v9l6,pattern size=6pt,pattern thickness=0.75pt,pattern radius=0pt, pattern color={rgb, 255:red, 0; green, 0; blue, 0}] (111.39,230.4) -- (111.31,206.51) -- (176.07,206.25) -- (176.15,230.14) -- cycle ;
\draw  [pattern=_v5rl1o6ny,pattern size=6pt,pattern thickness=0.75pt,pattern radius=0pt, pattern color={rgb, 255:red, 0; green, 0; blue, 0}] (348.34,86.12) -- (348.26,62.23) -- (413.02,61.97) -- (413.1,85.86) -- cycle ;
\draw  [pattern=_tsipkdw51,pattern size=6pt,pattern thickness=0.75pt,pattern radius=0pt, pattern color={rgb, 255:red, 0; green, 0; blue, 0}] (283.99,205.82) -- (283.57,86.38) -- (305.17,86.29) -- (305.59,205.73) -- cycle ;
\draw    (598.79,220.12) -- (598.64,110.71) ;
\draw   (370.41,229.37) -- (370.33,205.48) -- (435.09,205.22) -- (435.17,229.11) -- cycle ;
\draw   (89.32,87.15) -- (89.24,63.26) -- (154,63) -- (154.08,86.89) -- cycle ;
\draw   (46.64,230.65) -- (46.55,206.77) -- (111.31,206.51) -- (111.39,230.39) -- cycle ;
\draw   (413.1,85.86) -- (413.01,61.97) -- (477.77,61.71) -- (477.86,85.6) -- cycle ;
\draw    (510,150) -- (523,150) -- (549,150) ;
\draw [shift={(551,150)}, rotate = 180] [color={rgb, 255:red, 0; green, 0; blue, 0 }  ][line width=0.75]    (10.93,-3.29) .. controls (6.95,-1.4) and (3.31,-0.3) .. (0,0) .. controls (3.31,0.3) and (6.95,1.4) .. (10.93,3.29)   ;

\draw (70,75) node  [font=\LARGE]  {$\dotsc $};
\draw (500,75) node  [font=\LARGE]  {$\dotsc $};
\draw (28,220) node  [font=\LARGE]  {$\dotsc $};
\draw (455,220) node  [font=\LARGE]  {$\dotsc $};
\draw (624,61.4) node [anchor=north west][inner sep=0.75pt]    {$A_{1}$};
\draw (626.62,215.57) node [anchor=north west][inner sep=0.75pt]    {$A_{2}$};
\draw (525,153.4) node [anchor=north west][inner sep=0.75pt]    {$\pi $};
\draw (606,139.4) node [anchor=north west][inner sep=0.75pt]    {$h$};
\draw (231,233.4) node [anchor=north west][inner sep=0.75pt]    {$\widetilde{A_{2}}$};
\draw (271,34) node [anchor=north west][inner sep=0.75pt]    {$\widetilde{A_{1}}$};

\end{tikzpicture}

\caption{The shaded area is the disk constructed in Lemma~\ref{spectacles_lmm}. In this figure, $m_i = 2$ and $n_i = 3$.}\label{spectacles_figure}
\end{figure}
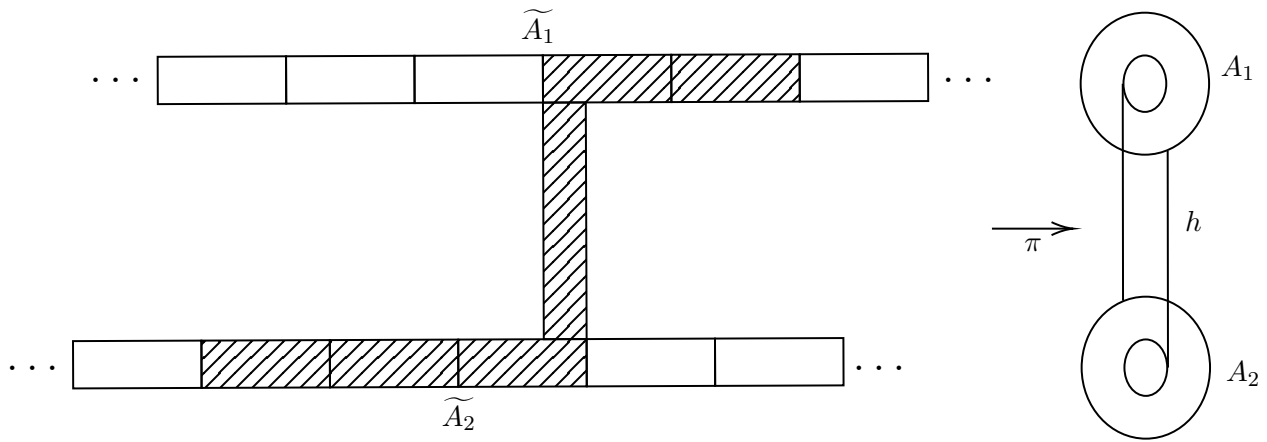

\begin{proof}[{\bf{\emph{Proof of Theorem~\ref{convexity_thm}}}}]
The proof succeeds by successive approximations. First, by definition, two Ahlfors currents $T_1$ and $T_2$ are approximated by disks. These disks are approximated from within by slightly smaller disks. By Proposition~\ref{reduction_to_surface_case_prp}, the latter are then approximated by regions in an irreducible $J$-holomorphic curve. Finally, any convex combination of these regions is approximated by disks by Lemma~\ref{spectacles_lmm}. The appropriate bounds on length and area (i.e.~that the lengths of the boundaries divided by the areas tends to $0$) are preserved throughout. As a limit of limits of currents of finite mass is a limit in the weak topology \cite[Corollary~7.1]{Fed60}, a diagonal sequence of these disks tends to a convex combination of $T_1$ and $T_2$.
\end{proof}

The proof of Example~\ref{extreme_points_eg} is already substantially contained in \cite{Har83}, so we only briefly describe the additional step below.

\begin{proof}[{\bf{\emph{Proof of Example~\ref{extreme_points_eg}}}}]
By \cite[Lemma~18]{Har83}, the extreme points of the convex set of closed positive currents in the fiber class are the fibers (and thus Ahlfors currents) and every closed positive current is in the closed convex hull of the fibers. Therefore, by Theorem~\ref{convexity_thm}, every closed positive current in the fiber class is an Ahlfors current. Thus, the space of Ahlfors currents in the fiber class equals the space of closed positive currents in the fiber class. As just mentioned, the extreme points of the latter set are the fibers.
\end{proof}

\vspace{.3in}

{\it Department of Mathematics, Stony Brook University, Stony Brook, NY 11794\\
spencer.cattalani@stonybrook.edu}

\bibliography{references}

\end{document}